\documentclass[preprint,11pt]{elsarticle}

\usepackage{ifpdf}
\usepackage{amsmath}
\usepackage{amsfonts}
\usepackage{amsthm}
\usepackage{mathrsfs}
\usepackage{color}
\usepackage{graphicx}
\usepackage{subfigure}
\usepackage{float}
\usepackage{overpic}
\usepackage{threeparttable}
\usepackage{epstopdf}
\usepackage{dcolumn}
\usepackage{multirow}
\usepackage{booktabs}
\biboptions{numbers,sort&compress}

\usepackage{graphicx,natbib,amssymb,lineno}
\ifpdf
\usepackage[%
  pdftitle={Instructions for use of the document class
    elsart},%
  pdfauthor={},%
  pdfsubject={The preprint document class elsart},%
  pdfkeywords={instructions for use, elsart, document class},%
  pdfstartview=FitH,%
  bookmarks=true,%
  bookmarksopen=true,%
  breaklinks=true,%
  colorlinks=true,%
  linkcolor=blue,anchorcolor=blue,%
  citecolor=blue,filecolor=blue,%
  menucolor=blue,pagecolor=blue,%
  urlcolor=blue]{hyperref}
\else
\usepackage[%
  breaklinks=true,%
  colorlinks=true,%
  linkcolor=blue,anchorcolor=blue,%
  citecolor=blue,filecolor=blue,%
  menucolor=blue,pagecolor=blue,%
  urlcolor=blue]{hyperref}
\fi

\makeatletter
\def\elsartstyle{%
    \def\normalsize{\@setfontsize\normalsize\@xiipt{14.5}}
    \def\small{\@setfontsize\small\@xipt{13.6}}
    \let\footnotesize=\small
    \def\large{\@setfontsize\large\@xivpt{18}}
    \def\Large{\@setfontsize\Large\@xviipt{22}}
    \skip\@mpfootins = 18\p@ \@plus 2\p@
    \normalsize
} \@ifundefined{square}{}{} \makeatother

\newtheorem{theorem}{Theorem}[section]
\newtheorem{lemma}[theorem]{Lemma}
\newtheorem{proposition}[theorem]{Proposition}
\newtheorem{corollary}[theorem]{Corollary}
\newtheorem{conjecture}[theorem]{Conjecture}

\theoremstyle{definition}

\newtheorem{remark}[theorem]{Remark}

\makeatletter
\def\ps@pprintTitle{%
  \let\@oddhead\@empty
  \let\@evenhead\@empty
  \let\@oddfoot\@empty
  \let\@evenfoot\@oddfoot
}
\makeatother

\def\ps@pprintTitle{%
  \let\@oddhead\@empty
  \let\@evenhead\@empty
  \def\@oddfoot{\reset@font\hfil\thepage\hfil}
  \let\@evenfoot\@oddfoot
}

\begin{document}

\begin{frontmatter}

\title{
A codimension-$5$ Hopf bifurcation yielding $5$ limit cycles \\ 
in a SIRS epidemic model with nonlinear incidence rate
}

\author[myaddress1]
{Pei Yu\corref{mycorrespondingauthor}}
\cortext[mycorrespondingauthor]{Corresponding author}
\ead{pyu@uwo.ca}
\author[myaddress1,myaddress2]
{Wanyue Tang}
\ead{wtang294@uwo.ca}
\author[myaddress3]
{Yanni Zeng}
\ead{yazeng@ttu.edu}
\address[myaddress1]{Department of Mathematics,
Western University, London, Ontario, N6A~5B7, Canada \vspace*{0.05in}}
\address[myaddress2]{School of Mathematics and Statistics, Lanzhou University,
Lanzhou, Gansu 730000, China \vspace*{0.05in}}
\address[myaddress3]{Mathematics and Statistics, Texas Tech University,
Lubbock, TX 79409, USA}


\begin{abstract}
In a recent paper published in the Journal of Differential Equations 
(384, 2024), Cui and Zhao investigated an SIRS epidemic model with the 
nonlinear incidence rate $\frac{kI^p}{1+\alpha I^q}$, where $p>0$ and 
$q\geq0$ are arbitrary real numbers. They proved that the codimension of 
a Bogdanov-Takens bifurcation in this model is at most two, while the 
codimension of the Hopf bifurcation was left as an open problem. 
In this work, we investigate this problem while keeping $p$ and $q$ free 
throughout the analysis. A strategic nondimensionalization and 
parametrization remove the exponential dependence of the positive 
equilibrium and convert the generalized-Hopf conditions into algebraic 
focus equations coupled to explicit semialgebraic admissibility regions. 
Using algebraic focus reduction, strict semialgebraic admissibility, and
validated interval computation, we rigorously prove the existence of
admissible nondegenerate generalized Hopf points generating three, four,
and five small-amplitude limit cycles. In particular, we rigorously certify
a strict-interior codimension-five weak focus and a full-rank local unfolding,
thereby proving that five small-amplitude limit cycles can bifurcate from a
single positive equilibrium. We further derive exact algebraic obstructions
to codimension six and find no admissible candidate in extensive continuation
and projected-component searches. These results provide analytical and
numerical evidence for the conjecture that five is maximal, but no
codimension-six nonexistence theorem is claimed. The same parametrization
also yields a simple proof that the Bogdanov-Takens codimension is at most two.
\end{abstract}

\begin{keyword}
SIRS model, Hopf and generalized Hopf bifurcation, Bautin bifurcation, 
normal form, focus value, limit cycle
\MSC 34C07, 34C23
\end{keyword}

\end{frontmatter}

\section{Introduction and main results}\label{Sec-1}  

Mathematical models have long been an effective tool for understanding 
the transmission and long-term dynamics of infectious diseases, and epidemic 
modeling has been extensively studied over the past several decades; see, 
for example,~\cite{diekmann2000mathematical,hethcote2000mathematics,
kermack1927contribution,anderson1979population,anderson1991infectious}. 
Among the classical compartmental models, the 
susceptible-infective-recovered-susceptible (SIRS) model is particularly 
suitable for describing diseases in which recovered individuals may 
lose their immunity and become susceptible again; see, 
for example,~\cite{hethcote1976qualitative,hethcote2000mathematics,
anderson1991infectious}.

Let $S(t)$, $I(t)$, and $R(t)$ denote the
numbers of susceptible, infective, and recovered individuals at time $t$,
respectively. Then, a standard SIRS model is given by
\begin{equation}\label{Eqn1} 
\left\{
\begin{array}{ll} 
\dfrac{d S}{d t}= \Lambda-dS-f(I)S+\delta R,\\[2.0ex] 
\dfrac{d I}{d t}=f(I)S-(d+\mu)I,\\[2.0ex] 
\dfrac{d R}{d t} =\mu I-(d+\delta)R,
\end{array} 
\right. 
\end{equation}
where the parameters $\Lambda, d, \mu, \delta$ all take positive values. 
Here, new individuals enter the susceptible population at recruitment rate 
$\Lambda$; infected individuals recover at rate $\mu$; and recovered 
individuals lose immunity and return to the susceptible class at 
rate $\delta$. The parameter $d$ denotes the natural death rate, 
and the term $f(I)S$ represents the incidence rate, which determines 
the rate of new infections.

The choice of the incidence function plays a crucial role in determining 
the qualitative dynamics of disease transmission.
In the pioneering work of 
Kermack and McKendrick~\cite{kermack1927contribution}, 
the classical SIR model considers the bilinear incidence rate $kIS$, 
assuming that the infection rate increases linearly with the number of 
infective individuals. This assumption, however, may become unrealistic 
when the prevalence of infection is high, as susceptible individuals 
may reduce their social contacts or adopt protective measures in 
response to the increasing risk of infection. Such behavioral and 
psychological responses can inhibit disease transmission and motivate 
the use of nonlinear incidence functions. To incorporate such behavioral 
and psychological effects into epidemic models, Capasso and 
Serio~\cite{capasso1978generalization} introduced the 
saturated incidence function
$$ 
f(I)S=\frac{kIS}{1+\alpha I},
$$
where the factor $(1+\alpha I)^{-1}$ represents the inhibition of 
disease transmission associated with behavioral or psychological 
responses to increasing disease prevalence.

A more general nonlinear incidence function was subsequently introduced
by Liu, Levin, and Iwasa~\cite{liu1986influence} in the form
\begin{equation}\label{Eqn2} 
f(I)S=\frac{kI^pS}{1+\alpha I^q},
\end{equation}
where $k>0$, $p>0$, $\alpha \geq 0$, and $q\geq0$. Depending on the relative values of $p$ and $q$, the incidence function
is unbounded when $p>q$ and saturated when $p=q$; see~\cite{tang2008coexistence}.
When $p<q$, the incidence function is nonmonotone: it initially increases
but eventually decreases as the number of infected individuals grows,
reflecting the suppression of transmission by behavioral or protective
responses among susceptible individuals. 
Formula~\eqref{Eqn2} includes several commonly used incidence 
functions as special cases. For example, the bilinear incidence $kIS$ 
is recovered when $\alpha=0$ and $p=1$, whereas the saturated incidence
$\frac{kIS}{1+\alpha I}$ corresponds to $p=q=1$. 
Thus, allowing $p$ and $q$ to vary provides a flexible framework for
capturing a wide range of nonlinear transmission mechanisms.

With the incidence function~\eqref{Eqn2}, the SIRS model~\eqref{Eqn1} becomes
\begin{equation}\label{Eqn3} 
\left\{
\begin{array}{ll}
\displaystyle
\dfrac{d S}{d t}= \Lambda-dS-\frac{kI^pS}{1+\alpha I^q}+\delta R,\\[2.0ex]
\dfrac{d I}{d t}=\dfrac{kI^pS}{1+\alpha I^q}-(d+\mu)I,\\[2.0ex] 
\dfrac{d R}{d t}=\mu I-(d+\delta)R.
\end{array} 
\right. 
\end{equation}
The dynamics of this model have been investigated extensively. 
Hu et al.~\cite{hu2011bifurcations} studied the existence and stability 
of equilibria and
showed that system~\eqref{Eqn3} has no limit cycles when $0<p\leq1$. For
$p>1$, they demonstrated that the system may undergo several types of
local bifurcations, including Hopf and Bogdanov--Takens (BT) bifurcations.
However, the exact codimension of the Hopf bifurcation was left
undetermined.

Several subsequent studies have investigated system~\eqref{Eqn3} for
particular choices of the exponents $p$ and $q$. For $p=q=2$, Ruan and
Wang~\cite{ruan2003dynamical} and Tang et al.~\cite{tang2008coexistence} 
showed that the system can
undergo a BT bifurcation of codimension two and a
degenerate Hopf bifurcation of codimension two. For $p=1$ and $q=2$,
Xiao and Ruan~\cite{xiao2007global} performed a global analysis and 
showed that the system has no nontrivial periodic orbits. Under the condition
$p=q$, Zhang et al.~\cite{zhang2022bifurcations} identified several types 
of bifurcations, including saddle-node bifurcations, codimension-two 
BT bifurcations, and codimension-two degenerate Hopf 
bifurcations. Other results on SIRS epidemic models with various types 
of incidence functions can be found 
in~\cite{liu1986influence,lizana1996multiparametric,lu2019bifurcation,
xiao2006qualitative,zhou2007bifurcations} and the references therein. 
In particular, Lu et al.~\cite{lu2021global} showed that an SIRS 
model with the generalized nonmonotone incidence rate
$
\frac{kIS}{1+\beta I+\alpha I^2}
$
can undergo a codimension-three Hopf bifurcation, giving rise to
three small-amplitude limit cycles.

To provide an epidemiological context for the parameter values used in the
model, we summarize in Table~\ref{table1} some representative
parameter ranges and time scales reported in the literature.  These values
should be regarded as reference values rather than universal parameter
bounds, since they depend on the disease under consideration, the population
scaling, and, for the nonlinear incidence function, the particular choices
of the incidence exponents.


\begin{table}[!h]
\centering
\caption{Representative epidemiological parameter values and ranges
\cite{el2023extending}.}
\label{table1} 
\vspace{0.15in}
\small
\begin{tabular}{|c|c|l|}
\hline
Parameter & Description & Representative value/range \\
\hline
$\Lambda$ & Recruitment rate
& $\Lambda=dN$ \\

$d$ & Natural mortality rate
& $3.42\times10^{-5}\ {\rm day}^{-1}$ \\

$k$ & Transmission coefficient
& Depends on $p$ and $N$ \\

$\alpha$ & Behavioral inhibition
& Depends on $q$ and $N$ \\

$\mu$ & Recovery rate
& $0.071$--$0.333\ {\rm day}^{-1}$ \\

$\delta$ & Loss-of-immunity rate
& $0.00137$--$0.00548\ {\rm day}^{-1}$ \\
\hline
\end{tabular}
\end{table}

We emphasize that the numerical values in
Table~\ref{table1} are intended only to provide
epidemiological reference scales.  In particular, the parameters $d$,
$\mu$, and $\delta$ have direct time-scale interpretations through the
average life expectancy, infectious period, and duration of immunity,
respectively.  In contrast, the numerical magnitudes of the transmission
coefficient $k$ and, especially, the inhibition parameter $\alpha$ depend
on the form and scaling of the nonlinear incidence function.
This distinction is particularly important for the generalized incidence
function considered in the present paper.  The representative values of
$\alpha$ displayed in Table~\ref{table1} were obtained for
particular integer choices of the incidence exponents and therefore should
not be interpreted as a fixed admissible range for $\alpha$. In the present model, $p$ and $q$ are allowed to be positive real numbers.
Consequently, when $p$ and $q$ differ from the integer values used in
obtaining the representative estimates in the table, the corresponding
value of $\alpha$ may have a substantially different numerical magnitude.
Here, $N=\frac{\Lambda}{d}$ denotes the total population size at demographic
equilibrium. We emphasize that model~(3) is formulated in the original,
non-normalized population variables; in particular, $I$ represents the
number of infective individuals rather than the infective proportion
$\frac{I}{N}$. This distinction is important for interpreting the parameters
in the generalized incidence function. To illustrate the role of the
inhibition parameter $\alpha$, let $I_{1/2}$ denote the number of
infective individuals at which the inhibition factor $\frac{1}{1+\alpha I^q} $
is reduced to $\frac{1}{2}$. Then
\[
\alpha I_{1/2}^q=1,
\qquad
\alpha=(I_{1/2})^{-q}.
\]
Thus, $\alpha$ may be interpreted in terms of the half-inhibition
threshold $I_{1/2}$. Since the model is formulated in the original,
non-normalized population variables, the numerical magnitude and
physical dimension of $\alpha$ depend on the exponent $q$ and the
population scale. Therefore, numerical values of $\alpha$
corresponding to different values of $q$ or different population
scales should not be compared directly.

The transmission coefficient $k$ can be interpreted similarly in terms
of the population scale. In the classical special case $p=1$ and
$\alpha=0$, one has
\[
R_0=\frac{kN}{d+\mu},
\qquad
k=\frac{R_0(d+\mu)}{N}.
\]
Thus, taking $R_0$ between $1$ and $7$ and using the values of $\mu$
listed in Table~\ref{table1} gives
\[
k\approx(0.071\text{--}2.33)N^{-1}\ {\rm day}^{-1}.
\]
This again illustrates the importance of the population scale in the
non-normalized formulation. For general $p>0$, however, the physical
dimension and population scaling of $k$ depend on $p$, and therefore
numerical values of $k$ corresponding to different incidence exponents
should not be compared without taking this scaling into account.

Thus, in interpreting the dimensional parameter examples presented below,
greater significance should be attached to the epidemiologically meaningful
time scales associated with $d$, $\mu$, and $\delta$ than to a direct
numerical comparison of $\alpha$ with values obtained for different choices
of $p$ and $q$.

These results indicate that nonlinear incidence functions can generate
rich and complicated dynamics in SIRS epidemic models. In particular,
the occurrence of Hopf bifurcations suggests that the model can exhibit
persistent epidemic oscillations. A natural and fundamental question is
therefore how many small-amplitude limit cycles can be generated 
by a single Hopf bifurcation when the exponents in the nonlinear 
incidence function are allowed to vary over the positive real numbers. 
This question was recently considered by Cui and Zhao~\cite{cui2024saddle}
for the general model~\eqref{Eqn3}. They proved that the codimension of a
BT bifurcation is at most two, while the codimension of
the Hopf bifurcation remained open. In particular, their results did
not determine the maximal degeneracy of the Hopf bifurcation or the
number of small-amplitude limit cycles that can bifurcate from a single 
Hopf critical point. 

The purpose of this paper is to resolve this open problem in the sense of
establishing a rigorous lower bound for the Hopf codimension. We prove that
the positive equilibrium of system~\eqref{Eqn3} admits an admissible
generalized Hopf bifurcation of codimension at least five and, more
precisely, that a single Hopf point can generate at least five
small-amplitude limit cycles. The three-, four-, and five-cycle results are
proved rigorously: the relevant weak foci are isolated by algebraic reduction
and validated interval computation, strict admissibility is certified, and
the required unfolding nondegeneracy is verified. By contrast, our
codimension-six analysis gives analytical and numerical evidence for
nonexistence but is stated only as a conjecture. Thus, the rigorous conclusion
of the present paper is that the Hopf codimension is at least five, while the
possible maximality of five remains open.

To facilitate the bifurcation analysis, we first reduce 
system~\eqref{Eqn3} to a planar system. Let $N(t)=S(t)+I(t)+R(t)$ 
denote the total population size. It follows directly 
from~\eqref{Eqn3} that
$$
\frac{dN}{dt}=\frac{d}{dt}(S+I+R)=\Lambda-d(S+I+R).
$$
Consequently,
\[
S(t)+I(t)+R(t) \longrightarrow \frac{\Lambda}{d}
\qquad \text{as } t\to\infty,
\]
and the plane $S+I+R=\frac{\Lambda}{d}$ is invariant. Hence, the
limit sets of system~\eqref{Eqn3} are contained in this plane.
Eliminating $S$ by
$
S=\frac{\Lambda}{d}-I-R,
$
we obtain the planar system
\begin{equation}\label{Eqn4} 
\left\{
\begin{array}{ll}
\dfrac{d I}{dt} = \dfrac{kI^p}{1+\alpha I^q}
\left(\dfrac{\Lambda}{d}-I-R\right) -(d+\mu)I,\\[2.0ex] 
\dfrac{d R}{dt} =\mu I-(d+\delta)R.
\end{array}
\right. 
\end{equation}
The corresponding positively invariant region is
\begin{equation}\label{Eqn5} 
\Omega = \left\{
(I,R): I\geq0,\ R\geq0,\ I+R\leq\frac{\Lambda}{d}
\right\}.
\end{equation}
Based on the planar system~\eqref{Eqn4}, we analyze the successive
Lyapunov coefficients (focus values) at the Hopf point. The central result is
a rigorous certification of an admissible order-five weak focus together with
a nondegenerate local unfolding, from which five small-amplitude limit cycles
are obtained.

The main theorem is stated below; its proof is given in Section~5.

\begin{theorem}\label{Thm1.1}
For system~\eqref{Eqn4}, there exists an admissible parameter
configuration for which the positive equilibrium is a nondegenerate
order-five weak focus. Under an arbitrarily small admissible local unfolding,
at least five small-amplitude limit cycles bifurcate from this equilibrium.
Consequently, the Hopf bifurcation at the positive equilibrium has
codimension at least five.
\end{theorem}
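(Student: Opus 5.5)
The plan is to first nondimensionalize system~\eqref{Eqn4} so that the positive equilibrium sits at a fixed point, say $(1,\rho)$. I would rescale $I = I^* x$, $R = R^* y$ and time so that the equilibrium equations become linear in the remaining parameters. Concretely, I would keep $p,q$ free and solve the two equilibrium relations for $k$ and $\Lambda$ in terms of $I^*$, $\alpha (I^*)^q$ and the rates. The exponentials $(I^*)^p$ and $(I^*)^q$ then appear only through $\alpha(I^*)^q =: a$ and through $k(I^*)^{p}$, which is eliminated. After this step the vector field near $(1,\rho)$ depends on $p$, $q$ only polynomially, through the Taylor coefficients of $x^p/(1+a x^q)$ at $x=1$. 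Those coefficients are polynomials in $p$, $q$ and $a/(1+a)$.

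Admissibility of the original parameters ($\Lambda,d,\mu,\delta,k>0$, $\alpha\ge0$, $p>0$, $q\ge 0$, and $S^*>0$) then becomes an explicit semialgebraic region $\mathcal{A}$ in the new parameter vector $\lambda$. Next I would impose the Hopf conditions $\mathrm{tr}\,J=0$ and $\det J>0$. I would solve the trace condition for one parameter (e.g.\ $\delta$ or the rescaled $\mu$) and shift the equilibrium to the origin. Using Jordan normal form coordinates, I would then compute the focus values $v_1,\dots,v_5$ by the normal form / Poincar\'e--Lyapunov method. Each $v_j$, after clearing positive denominators, is a polynomial $F_j(\lambda)$ in the remaining parameters, at least six of which must be free: $p$, $q$, $a$ and three rate ratios.

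The core step, and the expected main obstacle, is to exhibit a point $\lambda^*$ in the strict interior of $\mathcal{A}$ satisfying
\[
F_1=F_2=F_3=F_4=0,\qquad F_5\neq 0,\qquad \operatorname{rank}\frac{\partial(F_1,\dots,F_4)}{\partial(\lambda_1,\dots,\lambda_4)}(\lambda^*)=4 .
\]
The focus polynomials become enormous, so a naive solve is hopeless. I would first use resultants or successive elimination to reduce $F_1=F_2=0$ to a low-dimensional problem. I would then locate an approximate solution of the full system numerically, for instance by continuation from a codimension-3 or codimension-4 point.

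Rigor would come from a validated interval Newton (Krawczyk) test on a small box $B$ around the approximate root. Its success certifies a unique exact zero $\lambda^*\in B$ of $(F_1,\dots,F_4)$ together with nonsingularity of the Jacobian on $B$. Interval evaluation then checks three further facts on the whole of $B$: $F_5\neq0$, $\det J>0$, and that all defining inequalities of $\mathcal{A}$ hold strictly. Positive denominators must also be checked so that $v_j=0\iff F_j=0$.

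Given this certificate, the equilibrium at $\lambda^*$ is a weak focus of order exactly five. For the unfolding, I would use the standard argument: since $(v_1,\dots,v_4)$ has full-rank Jacobian with respect to four parameters, and the trace $v_0$ is controlled independently by the parameter solved for in the Hopf condition, the map $\lambda\mapsto(v_0,\dots,v_4)$ is a local submersion. Because $\lambda^*$ is interior to $\mathcal{A}$, all nearby perturbations remain admissible. One can therefore choose arbitrarily small perturbations with $|v_0|\ll|v_1|\ll\cdots\ll|v_4|\ll|v_5|$ and alternating signs. The classical Bautin-type theorem then yields five small-amplitude limit cycles near the equilibrium, which gives codimension at least five.
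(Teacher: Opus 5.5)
Your proposal is correct and follows essentially the same route as the paper. The shared steps are: a rescaling that pins the equilibrium and absorbs $(I^*)^p,(I^*)^q$ into the parameters (the paper's system~\eqref{Eqn13} with its $(a,m,n,g,D)$ parametrization), focus values computed in Jordan coordinates, numerical localization followed by a Krawczyk certificate, interval verification of strict admissibility, of $v_5\neq 0$ and of the unfolding rank, and the block-triangular transversality argument of Corollary~\ref{Coro5.1}. The one detail to make explicit is that only five essential parameters remain after the Hopf condition, so one must be frozen to make the Krawczyk system square; your rank condition implicitly does this. The paper fixes $a=\frac{321}{11764}$ and uses that $V_1$ is linear in $D$ to eliminate $D$ exactly, which leaves a $3\times 3$ system in $(\xi,\eta,g)$ to certify.
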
 

The rest of the paper is organized as follows. In Section 2, 
model \eqref{Eqn4} is rescaled to a dimensionless form to facilitate 
the analysis, and the positive equilibrium is normalized to $(1,1)$.
In Section 3, the stability of the equilibria is analyzed, with results 
that differ from those presented by Cui and Zhao~\cite{cui2024saddle}.
In Section 4, two special cases of Hopf bifurcation, corresponding
to $\alpha=0$ and $q=0$, are considered. Section 5 develops the focus-value
formulation directly in the parameters $(a,m,n,g,D)$ and then treats 
successively the three-, four-, five-, and six-limit-cycle problems 
under the corrected admissibility conditions. Section 6 records the 
separate $p=7$, $q=5$ comparison example. In Section 7, we provide 
a simple proof of the BT bifurcation for model \eqref{Eqn4}. 
Finally, Section 8 presents the conclusions and discusses open questions.

\section{Dimensionless model and equilibria}\label{Sec-2} 

To simplify the analysis of system \eqref{Eqn4}, 
we introduce the following scaling:
\begin{equation}\label{Eqn6} 
I = \dfrac{\Lambda}{d}\, X, \quad 
R = \dfrac{\mu \Lambda}{d(d + \delta)}\, Y, \quad 
t' = (d + \delta)\, t.  
\end{equation} 
Substituting \eqref{Eqn6} into system \eqref{Eqn4} and dropping the 
prime from $t'$ for simplicity, we obtain
\begin{equation}\label{Eqn7} 
\left\{
\begin{array}{ll} 
\dfrac{dX}{dt} = \dfrac{k_1 X^p}{1+b X^q} (1-X-sY) - r X, \\[2.0ex] 
\dfrac{dY}{dt} = X-Y,
\end{array} 
\right. 
\end{equation} 
where the new parameters are defined by  
\begin{equation}\label{Eqn8}  
k_1 = \frac{k}{d+\delta} \left(\dfrac{\Lambda}{d}\right)^p, 
\quad b = \alpha \left(\frac{\Lambda}{d}\right)^q, 
\quad s=\dfrac{\mu}{d + \delta}, \quad r = \dfrac{\mu + d}{d+\delta}. 
\end{equation}
Since $\mu,\,d,\,\delta>0$, these parameters satisfy  
\begin{equation}\label{Eqn9}  
s>0, \quad  s<r<s+1. 
\end{equation}

Suppose that system \eqref{Eqn7} has a positive (interior) equilibrium
\[
E_1=(\tilde X,\tilde X).
\]
Then $\tilde{X}$ satisfies 
\begin{equation}\label{Eqn10}  
k_1 \tilde{X}^p \big[1-(1+s)\, \tilde{X} \big]
= r \tilde{X} ( 1 + b \tilde{X}^q),  
\end{equation}
which requires 
\begin{equation}\label{Eqn11}  
0<\tilde{X} < \dfrac{1}{s+1}. 
\end{equation} 

We further introduce the scaling 
\begin{equation}\label{Eqn12}  
X = \tilde{X}\, x, \quad Y=\tilde{X}\, y. 
\end{equation} 
Substituting \eqref{Eqn12} into \eqref{Eqn7} yields 
\begin{equation}\label{Eqn13} 
\left\{
\begin{array}{ll}
\dfrac{dx}{dt} = \dfrac{K x^p}{1+B x^q} \left(\dfrac{1}{\tilde{X}}-x-sy\right) 
- r x, \\[2.0ex]     
\dfrac{dy}{dt} = x-y,
\end{array}
\right. 
\end{equation} 
where 
\begin{equation}\label{Eqn14}
K=k_1 \tilde{X}^p, \quad B=b\tilde{X}^q. 
\end{equation}

It follows from \eqref{Eqn14} that system \eqref{Eqn13} involves the 
seven quantities $p,\, q,\, K,\, B,\, \tilde X,\, s$, and $r$.
However, these quantities are subject to the equilibrium relation
\eqref{Eqn10}. Indeed, substituting $(x,y)=(1,1)$ into the first equation 
of \eqref{Eqn13} gives 
\begin{equation}\label{Eqn15}
K \big[ 1 - (1+s) \tilde{X}\big] = r \tilde{X} (1+B), 
\end{equation} 
which is equivalent to \eqref{Eqn10} in view of \eqref{Eqn14}. 

Hence, only six parameters are independent. We may therefore treat 
\begin{equation}\label{Eqn16}
p>0,\quad q\ge 0,\quad \tilde{X}>0,\quad B \ge 0,\quad s>0,\quad 
\textrm{and} \quad r \in (s,s+1),  
\end{equation} 
as six independent parameters when considering the positive 
equilibrium ${\rm E_1}=(1,1)$. 
Thus, system \eqref{Eqn13} has the positive equilibrium
\[
E_1=(1,1).
\]
In addition, it has the trivial equilibrium $ E_0=(0,0)$. 
In this paper, we focus on the Hopf bifurcation from 
the positive equilibrium ${\rm E_1}$.   

We note that the dimensionless system \eqref{Eqn13} differs from the 
dimensionless system considered in~\cite{cui2024saddle}, which is given by
\begin{equation}\label{Eqn17}
\left\{ 
\begin{array}{ll}
\dfrac{dx}{dt} = \dfrac{x^p}{1+\beta x^q} (\ell - x -y) - r x, \\[2.0ex] 
\dfrac{dy}{dt} = s x - y.
\end{array} 
\right. 
\end{equation} 

\begin{remark}\label{Rem2.1} 
At first glance, systems \eqref{Eqn13} and \eqref{Eqn17} may appear to 
be essentially equivalent. However, this apparent similarity is misleading. 
Retaining the parameter in the term $x^p$ is crucial for the subsequent 
analysis, as it eliminates exponential functions from the symbolic 
computations required to derive the focus values. This seemingly minor 
modification provides a substantial analytical advantage and significantly 
simplifies both the computation of the focus values and the solution 
of the coupled multivariate polynomial equations.
\end{remark}

As we will show in the following sections, system \eqref{Eqn13} has an 
important advantage over \eqref{Eqn17}. In particular, although system 
\eqref{Eqn17} can be used to establish the codimension of the 
BT bifurcation, it does not allow us to fully resolve 
the codimension problem for the Hopf bifurcation.

\section{Stability analysis of the equilibria}\label{Sec-3}  

Since this paper focuses on the Hopf bifurcation from 
the positive equilibrium ${\rm E_1}$, we primarily study the stability of 
${\rm E_1}$. 

Define 
\begin{equation}\label{Eqn18}
\tilde{X}_1 = 1- (s+1) \tilde{X}, \quad 
q_1 = \dfrac{(B+1)(p \tilde{X}_1-1)}{B \tilde{X}_1}, 
\quad 
p_c = 1+ \frac{1}{r} + \frac{\tilde{X}}{\tilde{X}_1}, \quad 
q_c = p-p_c, \quad 
B_{\rm H} = \dfrac{q_c}{q-q_c}, 
\end{equation}
where $0<\tilde{X}_1 <1$ follows from \eqref{Eqn11}. 
 
We first state the existence and stability conditions for ${\rm E_1}$. 

\begin{proposition}\label{Prop3.1} 
For system \eqref{Eqn13}, 
the positive equilibrium ${\rm E_1}=(1,1)$ exists if 
\eqref{Eqn10} holds together with the condition \eqref{Eqn11}. 

Moreover, ${\rm E_1}$ is locally asymptotically stable $($LAS$\,)$ 
if $s>0$ and $s<r<s+1$, and one of the following conditions holds:
\begin{enumerate}
\item[{\rm (1)}] 
$0<p \le 1$, $q\ge 0$, $0<\tilde{X}<\frac{1}{s+1}$, and $B \ge 0$.  

\vspace{0.05in} 
\item[{\rm (2)}] 
$1<p< \min\big\{p_c, \frac{1}{\tilde{X}_1} \big\}$, $q \ge 0$, 
$0<\tilde{X}<\frac{1}{s+1}$, and $B \ge 0$.

\vspace{0.05in} 
\item[{\rm (3)}] 
$\frac{1}{\tilde{X}_1}<p \le 1 + \frac{1}{r}$, $q>q_1$,  
$0<\tilde{X} \le \frac{1}{(s+1)(r+1)}$, and $B>0$.  

\vspace{0.05in} 
\item[{\rm (4)}] 
$1 + \frac{1}{r} < p < \min \big\{p_c,\frac{1}{\tilde{X}_1} \big\}$, 
$q \ge 0$, $\frac{1}{(s+1)(r+1)}<\tilde{X}< \frac{1}{s+1}$, and $B>0$.  

\vspace{0.05in} 
\item[{\rm (5)}] 
$p_c<p\le \frac{1}{\tilde{X}_1}$, $q>q_c$,  
$\frac{1}{s+1+sr}<\tilde{X}< \frac{1}{s+1}$, and $B>B_{\rm H}$.  

\vspace{0.05in} 
\item[{\rm (6)}] 
$\max\big\{\frac{1}{\tilde{X}_1},1+\frac{1}{r}\big\}<p \le p_c$, 
$q>q_1$, $0<\tilde{X}< \frac{1}{s+1+sr}$, and  $B>0$.

\vspace{0.05in} 
\item[{\rm (7)}] 
$p>\max\big\{p_c, \frac{1}{\tilde{X}_1}\big\}$, $q>\max\{q_c,q_1\}$, 
$0<\tilde{X}< \frac{1}{s+1}$, and  $B>B_{\rm H}$.
\end{enumerate} 
A Hopf bifurcation occurs from ${\rm E_1}$ at $B=B_{\rm H}$ provided 
$s>0$ and $s<r<s+1$, and either  
$$ 
p_c \!<\! p \! \le\! \tfrac{1}{\tilde{X}_1}, \ \, 
q \!>\!q_c, \ \, \tfrac{1}{s+1+sr}<\tilde{X} 
\!<\! \tfrac{1}{s+1}; \quad \textrm{or} \quad 
p\!>\! \max\left\{p_c,\tfrac{1}{\tilde{X}_1}\right\}, \ \, 
q \!>\! \max\{q_c,q_1\}, \ \, 0<\tilde{X} \!<\! \tfrac{1}{s+1}. 
$$ 
\end{proposition}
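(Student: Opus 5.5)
The plan is a direct linearization at ${\rm E_1}=(1,1)$, with both Routh--Hurwitz conditions reduced to inequalities on the single quantity $\phi(B):=\frac{qB}{1+B}$. Existence is immediate: substituting $(x,y)=(1,1)$ into \eqref{Eqn13} gives \eqref{Eqn15}, which is \eqref{Eqn10} by \eqref{Eqn14}. Positivity of $K$ forces $\tilde X_1=1-(1+s)\tilde X>0$, which is \eqref{Eqn11}.

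For the Jacobian, write $F(x)=\frac{Kx^p}{1+Bx^q}$ and $G(x,y)=\frac1{\tilde X}-x-sy$. At $(1,1)$ we have $G=\tilde X_1/\tilde X$, and \eqref{Eqn15} gives $F(1)G(1,1)=r$ and $F(1)=r\tilde X/\tilde X_1$. Moreover $F'(1)/F(1)=p-\phi(B)$. Hence
$$
J=\begin{pmatrix} r\big(p-1-\phi(B)\big)-\frac{r\tilde X}{\tilde X_1} & -\frac{sr\tilde X}{\tilde X_1}\\ 1 & -1\end{pmatrix}.
$$
Dividing by $r>0$ and using the identity $\frac1{\tilde X_1}=1+\frac{(1+s)\tilde X}{\tilde X_1}$, I expect two equivalences. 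First, ${\rm tr}\,J<0$ if and only if $\phi(B)>p-p_c=q_c$. Second, $\det J>0$ if and only if $\phi(B)>p-\frac1{\tilde X_1}$, which for $B>0$ is equivalent to $q>q_1$ by the definition of $q_1$ in \eqref{Eqn18}. So ${\rm E_1}$ is LAS exactly when
$$
\phi(B)>\max\Big\{q_c,\;p-\tfrac1{\tilde X_1}\Big\}.
$$
Since $\phi$ is increasing in $B$, with $\phi(0)=0$ and $\phi(B)<q$, the condition $\phi(B)>q_c$ is equivalent to $q>q_c$ and $B>B_{\rm H}$ when $q_c\ge0$. When $q_c<0$ it holds for all $B\ge0$.

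The cases (1)--(7) then follow from a bookkeeping of the signs of $q_c$ and $p-\frac1{\tilde X_1}$, using two threshold comparisons. The first is $p_c<\frac1{\tilde X_1}$ if and only if $\tilde X_1<rs\tilde X$, that is, $\tilde X>\frac1{s+1+sr}$. The second is $\frac1{\tilde X_1}\le 1+\frac1r$ if and only if $\tilde X\le\frac1{(s+1)(r+1)}$. In case (1), $p\le1<\min\{p_c,\frac1{\tilde X_1}\}$, so both right-hand sides are negative and any $B\ge0$ works. The same holds in cases (2) and (4). In cases (3) and (6) the trace condition is automatic and only $q>q_1$ is needed. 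In case (5) only the trace condition binds, giving $q>q_c$ and $B>B_{\rm H}$. Case (7) needs both conditions. Each case is a short verification that the stated ranges place $(p,\tilde X)$ in the corresponding sign pattern. This matching, including boundary equalities such as $p=\frac1{\tilde X_1}$ and $p=1+\frac1r$, is the tedious step where I expect the main risk of a mismatch with the listed conditions.

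For the Hopf bifurcation I will take $B$ as the bifurcation parameter. At $B=B_{\rm H}$ we have $\phi=q_c$, so ${\rm tr}\,J=0$. The determinant is then positive provided $q_c>p-\frac1{\tilde X_1}$ (equivalently $q>q_1$ at $B=B_{\rm H}$), which yields purely imaginary eigenvalues $\pm i\sqrt{\det J}$. Transversality follows from
$$
\frac{d}{dB}\,{\rm tr}\,J=-\frac{rq}{(1+B)^2}\neq0 .
$$
Finally, I will check that the two listed parameter sets are exactly those in which $B_{\rm H}>0$ (that is, $q>q_c>0$) and the determinant condition holds at $B_{\rm H}$. The determinant condition at $B_{\rm H}$ is the point I would verify most carefully, since it effectively requires $p_c<\frac1{\tilde X_1}$.
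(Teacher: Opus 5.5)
Your proposal is correct and follows essentially the paper's route: linearize at $(1,1)$, impose Routh--Hurwitz, and split into cases. Writing both conditions as $\phi(B)>\max\{q_c,\,p-\frac{1}{\tilde X_1}\}$ is a tidier form of the paper's pairing of (i)--(ii) with (a)--(c), and it makes visible a restriction the paper's second Hopf set leaves implicit: $q>q_1$ at $B=B_{\rm H}$ is equivalent to $\tilde X>\frac{1}{s+1+sr}$. Two small points: in case (6) at $p=p_c$ one has $q_c=0$, so the trace condition is not automatic but follows from $B>0$ and $q>q_1>0$; and your transversality computation $\frac{d}{dB}{\rm tr}\,J=-\frac{rq}{(1+B)^2}\neq0$ is a genuine addition, since the paper's proof of this proposition does not check it.
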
 

\begin{proof} 

By \eqref{Eqn15}, we have 
\begin{equation}\label{Eqn19}
K = \dfrac{r (1+B) \tilde{X}}{\tilde{X}_1}, \\[1.0ex] 
\end{equation}  
which is positive due to $r>0$, $B \ge 0$, $\tilde{X}>0$ 
and $\tilde{X}_1>0$. 

The local stability of ${\rm E_1}$ is determined by the 
characteristic equation of the Jacobian matrix of system \eqref{Eqn13}, 
which can be written as  
\begin{equation}\label{Eqn20}
\lambda^2 - {\rm Tr}_1 \lambda + {\det}_1 =0, \\[-2.0ex] 
\end{equation} 
where 
\begin{equation}\label{Eqn21}
\begin{array}{rl} 
{\rm Tr}_1= \!\!\! & 
- \, \dfrac{1}{(1+B) \tilde{X}_1} 
\big\{B q r \tilde{X}_1+ r (1+B) \big[\big(\frac{1}{r}+1-p \big) \tilde{X}_1 
+\tilde{X} \big] \big\}  \\[2.0ex]  
= \!\!\! & - \, \dfrac{1}{(1+B) \tilde{X}_1} 
\big[B q r \tilde{X}_1+(1+B) r (p_c-p) \big] \\[2.0ex]  
= \!\!\! & -\, \dfrac{r}{1+B} (q-q_c) (B-B_{\rm H}), 
\\[2.0ex] 
\det_1 = \!\!\! & 
\dfrac{r}{(1+B) \tilde{X}_1} 
\big[B q \tilde{X}_1+(B+1) (1-p \tilde{X}_1)\big]. 
\\[3.0ex]  
\end{array} 
\end{equation}
By the Routh-Hurwitz criterion,  
${\rm E_1}$ is locally asymptotically stable if ${\rm Tr}_1<0$ and $\det_1>0$. 

First, consider $0<p\le1$. Since $\tilde{X}_1\in(0,1)$, we have
$1-p\tilde{X}_1>0$. Hence $\det_1>0$ for $q\ge0$, $r>0$ and $B \ge 0$. 
Moreover, the trace ${\rm Tr}$ is negative (see the first equation 
in \eqref{Eqn21}) under the same parameter conditions. 
Therefore, $E_1$ is LAS, which proves condition {\rm (1)}.

Next, suppose that $p>1$. From the expression for $\det_1$ in \eqref{Eqn21}, 
we obtain ${\det}_1>0$ under one of the following two sets of conditions:
\begin{equation}\label{Eqn22} 
\begin{array}{cl} 
\textrm{(i)} & 1<p \le \dfrac{1}{\tilde{X}_1}, \ \ q \ge 0,\ \ 
0<\tilde{X} < \dfrac{1}{s+1}, \ \ B \ge 0; 
\\[2.0ex] 
\textrm{(ii)} & 
p > \dfrac{1}{\tilde{X}_1}, \ \ q>q_1, \ \ 
0<\tilde{X} < \dfrac{1}{s+1}, \ \ B > 0.
\end{array} 
\end{equation} 
On the other hand, it follows from the first three equations 
\eqref{Eqn21} that ${\rm Tr}_1<0 $ requires one of the following three 
conditions:
\begin{equation}\label{Eqn23} 
\begin{array}{cl} 
\textrm{(a)} & 1<p \le 1 + \dfrac{1}{r}, \ \ q \ge 0, \ \ 
0<\tilde{X} < \dfrac{1}{s+1}, \ \ B \ge 0; 
\\[2.0ex] 
\textrm{(b)} & 1+\dfrac{1}{r}<p \le p_c, \ \ q \ge 0, \ \ 
0<\tilde{X} < \dfrac{1}{s+1}, \ \ B \ge 0; 
\\[2.0ex] 
\textrm{(c)} & p > p_c, \ \ q>q_c \ \ 
0<\tilde{X} < \dfrac{1}{s+1}, \ \  B>B_{\rm H}. 
\end{array} 
\end{equation} 

Now directly combining the two conditions in \eqref{Eqn22} and 
three conditions in \eqref{Eqn23} yields the conditions 
(2)-(7) in the theorem as follows:
$$ 
\begin{array}{rll}
\textrm{(i) and (a)} & \Longrightarrow & (2), \\[1.0ex] 
\textrm{(ii) and (a)} & \Longrightarrow & (3), \\[1.0ex] 
\textrm{(i) and (b)} & \Longrightarrow & (4), \\[1.0ex] 
\textrm{(i) and (c)} & \Longrightarrow & (5), \\[1.0ex] 
\textrm{(ii) and (b)} & \Longrightarrow & (6), \\[1.0ex] 
\textrm{(ii) and (c)} & \Longrightarrow & (7). \\[1.0ex] 
\end{array} 
$$
In proving (3), note that $B \ne 0$ since $q_1>0$, 
and $\frac{1}{\tilde{X}_1}>1$ $(0<\tilde{X}_1<1)$. 
For items (3) and (4), 
the condition $\tilde{X} \lessgtr \frac{1}{(s+1)(r+1)}$ comes from 
$$
1 +\dfrac{1}{r} \gtrless \dfrac{1}{\tilde{X}_1} \ \ 
\Longleftrightarrow \ \ 
(s+1)(r+1) \tilde{X} \lessgtr 1. 
$$ 
In proving items (5) and (6), the condition
$\tilde{X} \gtrless \frac{1}{s+1+sr}$ is due to 
$$
p_c \lessgtr \dfrac{1}{\tilde{X}_1} \ \ \Longleftrightarrow \ \ 
(s+1+sr) \tilde{X} \gtrless 1. 
$$ 
Finally, combining (ii) and (c) directly yields (7). 

Therefore, the seven cases in the theorem follow from the 
conditions ${\rm Tr}_1<0$ and $\det_1>0$. 

The two Hopf bifurcation conditions can be obtained from
${\rm Tr_1}=0$ and $\det_1>0$, which are directly by setting
$B=B_{\rm H}$ in items (5) and (7), respectively.
\end{proof}

\section{Two small-amplitude limit cycles when $q=0$}\label{Sec-4}

In this section, we consider the special case $q=0$ and prove that 
the associated Hopf bifurcation is a Bautin bifurcation.
Setting $q=0$ in \eqref{Eqn13} yields 
\begin{equation}\label{Eqn24}
\left\{
\begin{array}{ll}
\dfrac{dx}{dt} = \dfrac{K X^p}{1+B} \left(\dfrac{1}{\tilde{X}}-x-sy\right) 
- r x, \\[2.0ex]     
\dfrac{dy}{dt} = x-y.
\end{array}
\right. 
\end{equation} 
Here, we assume 
$$
p>1, \quad B \ge 0, \quad \textrm{and} \quad 0<s<r<s+1.
$$ 

System \eqref{Eqn24} has a positive equilibrium  
at ${\rm E_1}=(1,1)$. Then, from the first equation 
of \eqref{Eqn24}, we obtain
\begin{equation}\label{Eqn25}
K \big[1-(s+1) \tilde{X} \big] - r (1+B) \tilde{X} = 0,
\end{equation}
and hence
$$  
K = \dfrac{r (1+B) \tilde{X}}{\tilde{X}_1},
$$
where $\tilde{X}_1$ is defined in \eqref{Eqn18}. 
The existence of the positive equilibrium requires
that $0<s<r<s+1$ and $0<\tilde{X}<\frac{1}{s+1}$ as in 
\eqref{Eqn9} and \eqref{Eqn11}. 

The method we will use to prove the existence of limit cycles arising 
from generalized Hopf bifurcations is based on the normal form, or 
equivalently, the focus values or Lyapunov constants. 
We need the following theorem to 
determine the number of small-amplitude limit cycles. This result can 
be found in many textbooks and publications; see, for example, 
\cite{tianyu2015}. 

\begin{lemma}\label{Lem4.1}
Suppose an $n$-dimensional dynamical system $\dot{\boldsymbol{x}} 
= \boldsymbol{f}(\boldsymbol{x},\boldsymbol{\mu})$, 
where $\boldsymbol{x} \in R^n$ and $\boldsymbol{\mu}
\!=\! (\mu_1,\mu_2,\ldots,\,\mu_k)$ is a $k$th-dimensional parameter vector, 
undergoes a Hopf bifurcation from the equilibrium 
$\boldsymbol{x}\!=\!\boldsymbol{0}$ at the critical parameter value 
$\boldsymbol{\mu}\!=\!\boldsymbol{0}$. 
The normal form associated with the Hopf bifurcation 
is given in polar coordinates by 
$$ 
\dot{r} = r \, \big[\, v_0(\boldsymbol{\mu}) + v_1(\boldsymbol{\mu})\, r^2 
+ v_2(\boldsymbol{\mu})\, r^4 + \cdots v_k(\boldsymbol{\mu})\, r^{2k} 
+ \cdots \big], 
$$  
where $r$ represents the amplitude of a limit cycle, and 
$v_j$ is called the $j$th-order focus value. 
Then, the system can have $k$ small-amplitude limit cycles near the 
equilibrium $\boldsymbol{x}=\boldsymbol{0}$ 
for some parameter values $\boldsymbol{\mu}$ sufficiently close to 
$\boldsymbol{0}$, provided that
$$ 
\begin{array}{ll}
v_j(\boldsymbol{0})=0,\quad j=0,\,1,\,\cdots,\,k-1, \ \ v_k(\boldsymbol{0}) 
\ne 0, \\[2.0ex] 
\det\left[\dfrac{\partial(v_0,\,v_1,\,\cdots,\,v_{k-1})} 
{\partial(\mu_1,\,\mu_2,\,\cdots,\,\mu_k)} \right]_{\boldsymbol{\mu=0}} 
\ne 0.  
\end{array} 
$$ 
\end{lemma}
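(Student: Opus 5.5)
The statement to prove is Lemma~\ref{Lem4.1}: if $v_0,\dots,v_{k-1}$ vanish at $\boldsymbol{\mu}=\boldsymbol{0}$, $v_k(\boldsymbol{0})\ne0$, and the Jacobian of $(v_0,\dots,v_{k-1})$ with respect to $\boldsymbol{\mu}$ is nonsingular, then $k$ small-amplitude limit cycles bifurcate. The plan is to reduce everything to the scalar displacement (Poincaré return) function and count its positive zeros near $r=0$ by a sign-alternation construction.

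\emph{Reduction to a scalar equation.} By the centre manifold theorem, the dynamics near the Hopf point reduce smoothly to a two-dimensional family. In polar coordinates the angular equation is $\dot\theta=\omega(\boldsymbol{\mu})+O(r^2)$ with $\omega(\boldsymbol{0})\neq 0$. Dividing the radial equation by it, we obtain a smooth family $dr/d\theta$ whose Poincaré return map on a section $\theta=0$ has displacement
\[
d(r,\boldsymbol{\mu})=r\,\big[\tilde v_0(\boldsymbol{\mu})+\tilde v_1(\boldsymbol{\mu})r^2+\cdots+\tilde v_k(\boldsymbol{\mu})r^{2k}+O(r^{2k+2})\big].
\]
We then check that the coefficients $\tilde v_j$ relate to the focus values $v_j$ by a triangular transformation. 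Specifically, $\tilde v_0=c_0(e^{2\pi v_0/\omega}-1)$ and, for each $j$, $\tilde v_j=c_j v_j+\sum_{i<j}h_{ij}(\boldsymbol{\mu})v_i$ with $c_j>0$ near $\boldsymbol{0}$. This is the standard fact that modulo the ideal $(v_0,\dots,v_{j-1})$ the $j$th Lyapunov coefficient is a positive multiple of $v_j$. Consequently the vanishing hypotheses, the condition $\tilde v_k(\boldsymbol{0})\ne0$, and the nonsingular Jacobian all transfer from $v_j$ to $\tilde v_j$. Limit cycles correspond exactly to positive zeros $r$ of $d(r,\boldsymbol{\mu})/r$.

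\emph{Choosing parameters.} Since the Jacobian is nonsingular, the map $\boldsymbol{\mu}\mapsto(\tilde v_0,\dots,\tilde v_{k-1})$ is a local diffeomorphism by the inverse function theorem. Hence the $\tilde v_j$ can be prescribed independently in a small neighbourhood. Without loss of generality take $\tilde v_k(\boldsymbol{0})>0$.

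\emph{Sign alternation.} We perturb successively:
\begin{itemize}
\item First choose $\tilde v_{k-1}$ with $0<|\tilde v_{k-1}|\ll1$ and sign opposite to $\tilde v_k$. The bracket then changes sign at some $r\sim\sqrt{|\tilde v_{k-1}/\tilde v_k|}$.
\item Then choose $\tilde v_{k-2}$ with sign opposite to $\tilde v_{k-1}$ and $|\tilde v_{k-2}|\ll|\tilde v_{k-1}|$, which creates a new sign change at a smaller radius while preserving the previous one.
\item Continue down to $\tilde v_0$.
\end{itemize}
At each stage we pick points $0<r_0<r_1<\dots<r_k$ where the bracket alternates in sign. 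The intermediate value theorem then gives $k$ distinct positive zeros, hence $k$ limit cycles, all contained in an arbitrarily small neighbourhood of the origin as $\boldsymbol{\mu}\to\boldsymbol{0}$.

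\emph{Main obstacle.} The delicate step is the triangular relation between the normal-form focus values $v_j$ and the return-map coefficients. This relation is what guarantees that the Jacobian condition stated for the $v_j$ implies independence of the $\tilde v_j$. I would handle it by noting that if $v_0=\dots=v_{j-1}=0$, the normal form's polar equation integrates explicitly to leading order, giving $\tilde v_j=2\pi v_j/\omega$. By smooth dependence on parameters, this equality holds modulo the ideal generated by the lower $v_i$, which yields the triangular structure. The remaining bookkeeping is to make the $O(r^{2k+2})$ remainder uniformly smaller than the terms at the chosen points $r_i$. This follows by taking the successive scales sufficiently separated.
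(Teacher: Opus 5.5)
Your argument is correct. Note, however, that the paper does not prove Lemma~\ref{Lem4.1} at all: it quotes it as a standard result from the normal-form literature (citing \cite{tianyu2015}) and uses it as a black box. What you supply is the classical displacement-function proof, built from five ingredients:
\begin{itemize}
\item centre-manifold reduction to a planar family;
\item the return map in normal-form coordinates;
\item the triangular relation $\tilde v_0=e^{2\pi v_0/\omega}-1$, $\tilde v_j=(2\pi/\omega)v_j+\sum_{i<j}h_{ij}v_i$ between focus values and return-map coefficients;
\item the inverse function theorem;
\item successive sign alternation with the intermediate value theorem.
\end{itemize}
The citation keeps the paper short. Your route buys an explicit reason why the hypotheses are invariant under triangular changes of the focus quantities with nonzero diagonal. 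The paper uses exactly this invariance without comment when it passes from $v_j$ to the numerators $V_j$ (Lemma~\ref{Lem5.6}), eliminates $D$, and factors unfolding determinants (Corollary~\ref{Coro5.1}, Proposition~\ref{Prop5.7}).

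Three small points would make your write-up airtight.

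\emph{Ideal membership.} ``Smooth dependence on parameters'' does not by itself give membership in the ideal $(v_0,\dots,v_{j-1})$. Argue instead as follows. $\tilde v_j$ is a universal analytic function of the truncated normal-form coefficients $(v_0,\dots,v_j,\omega,\tau_1,\dots,\tau_j)$. It equals $2\pi v_j/\omega$ on $\{v_0=\dots=v_{j-1}=0\}$. Apply Hadamard's lemma in those variables. Alternatively, in $\boldsymbol{\mu}$-space, use that the nonsingular Jacobian makes $v_0,\dots,v_{k-1}$ local coordinates.

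\emph{Remainder order.} If the normal form is computed only through order $2k+1$, the $\theta$-dependent remainder inside the bracket is $O(r^{2k+1})$, not $O(r^{2k+2})$. This is harmless: any remainder that is $o(r^{2k})$ uniformly in $\boldsymbol{\mu}$ suffices for the sign-alternation step. You could also carry the normal form one order further, since even-order terms are nonresonant and can be removed.

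\emph{Isolation of the zeros.} You should state that the zeros you produce are isolated. Since $\tilde v_k(\boldsymbol{\mu})\neq0$, the $2k$-th $r$-derivative of $d/r$ stays bounded away from zero near $r=0$. Hence $d/r$ has only finitely many zeros there, and the $k$ periodic orbits obtained from the intermediate value theorem are genuine limit cycles rather than members of a period annulus.
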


For system \eqref{Eqn24}, we have the following result. 

\begin{proposition}\label{Prop4.2} 
For system \eqref{Eqn24}, two small-amplitude limit cycles can bifurcate
from a Hopf critical point near the positive equilibrium 
${\rm E_1}=(1,\,1)$, giving rise to a Bautin 
$($codimension-two generalized Hopf$\,)$ bifurcation.
Moreover, the outer limit cycle is unstable, whereas the inner limit 
cycle is stable, and both limit cycles enclose the unstable equilibrium 
${\rm E_1}$. 
\end{proposition}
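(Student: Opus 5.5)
Set $q=0$ in the trace formula \eqref{Eqn21}. Then ${\rm Tr}_1=-r(p_c-p)$ and $\det_1=\frac{r}{\tilde X_1}(1-p\tilde X_1)$, both independent of $B$. Indeed, with $q=0$ the factor $1+B$ is absorbed into $K$ through \eqref{Eqn25}, so $B$ drops out entirely. The Hopf condition is therefore $p=p_c=1+\frac1r+\frac{\tilde X}{\tilde X_1}$. Together with $\det_1>0$, i.e.\ $p<1/\tilde X_1$, this requires $\frac{1}{s+1+sr}<\tilde X<\frac{1}{s+1}$, by the same equivalence used in the proof of Proposition~\ref{Prop3.1}. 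The free parameters at the Hopf point are thus $(\tilde X,s,r)$, with $p$ eliminated via $p=p_c$.

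Next I translate $E_1$ to the origin and expand the vector field of \eqref{Eqn24} in Taylor series. Since $x^p$ is expanded about $x=1$, the coefficients are polynomial in $p$, with no exponentials; this is the point of Remark~\ref{Rem2.1}. I bring the linear part to Jordan form with frequency $\omega=\sqrt{\det_1}$ and compute the first two focus values $v_1,v_2$ by a normal form (or Lyapunov constant) algorithm. After substituting $p=p_c$ and clearing positive factors, $v_1$ should reduce to a polynomial $F_1(\tilde X,s,r)$. I then solve $F_1=0$ for one variable; $s$ or $r$ is likely to appear at most quadratically, so I would try eliminating $r$. Substituting this into $v_2$ gives a rational function $F_2$ of the remaining two parameters.

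The main obstacle is showing that the curve $v_1=0$ actually meets the admissible region $0<s<r<s+1$, $\frac{1}{s+1+sr}<\tilde X<\frac1{s+1}$, with $v_2\neq0$ there. I would first look for an explicit rational point, fixing a convenient $s$ and solving for $\tilde X$ and $r$. I would then verify the strict inequalities and the sign of $v_2$ exactly, either symbolically or by interval evaluation. The proposition asserts that the inner cycle is stable and the outer unstable, around an unstable focus. Reading this from the outside in, the sign pattern needed is $v_2>0$ at the critical point, then $v_1<0$ and $v_0>0$ after perturbation. So the chosen point must have $v_2>0$, and I would search the admissible region for such a point. If $v_2$ can take either sign, I would pick a point where $v_2>0$ and show that this sign is robust nearby.

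Finally, I apply Lemma~\ref{Lem4.1} with $k=2$, using the two unfolding parameters $\mu_1=p-p_c$ and $\mu_2$ equal to a perturbation of $r$ (or $\tilde X$). We have $v_0=\frac12{\rm Tr}_1$, so $\partial v_0/\partial p=\frac r2\neq0$ and $\partial v_0/\partial r$ is explicit. The Jacobian $\det\partial(v_0,v_1)/\partial(\mu_1,\mu_2)$ then reduces essentially to $\frac r2\,\partial v_1/\partial\mu_2$ minus a cross term, and I would check that it is nonzero at the chosen point. The perturbation is then done in order: first perturb $\mu_2$ so that $v_1<0$ is small, producing one stable cycle; then perturb $\mu_1$ so that $v_0>0$ is smaller still, which creates the inner stable cycle. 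The result is two cycles around an unstable $E_1$, inner stable and outer unstable, which is a Bautin bifurcation.
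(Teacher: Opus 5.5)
Your plan is essentially the paper's proof: Hopf at $p=p_c$ (which equals the paper's $p_{\rm H}$), normal-form computation of $v_1,v_2$, solving $v_1=0$ in the admissible range, checking $v_2>0$, and unfolding in $p$ and $r$. The only real difference is that the paper's computation is fully explicit, so no rational-point search or interval check is needed. The only sign-changing factor of $v_1$ is $2(s+1+sr)\tilde X-r-2$, which has an admissible zero if and only if $s>1$, and $v_2|_{v_1=0}>0$ for all $1<s<r<s+1$. Your Jacobian check, which the paper leaves implicit, also holds, because the $r$-derivative of that factor is $2s\tilde X-1=\frac{s-1}{s+1+sr}\neq0$ on $v_1=0$.

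One slip in your last paragraph needs correcting. With $v_0=0$, $v_1<0$, $v_2>0$, the cycle of amplitude $\sqrt{-v_1/v_2}$ created by your first perturbation is \emph{unstable}, not stable, and it becomes the outer cycle. Only the subsequent step $0<v_0\ll|v_1|$ produces the stable inner cycle, which gives the configuration you correctly state at the end.
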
 

\begin{proof} 
At the equilibrium ${\rm E_1}=(1,\,1)$, the Jacobian matrix 
of \eqref{Eqn24} is 
$$ 
J(1,\,1) = \left[\begin{array}{cc} 
\dfrac{r [ p \tilde{X}_1 - (1 - s \tilde{X}) ]}{\tilde{X}_1} & 
-\, \dfrac{r s \tilde{X}}{\tilde{X}_1} \\[2.0ex] 
1 & -1 \end{array} \right].  
$$
Consequently, the characteristic polynomial is given by \eqref{Eqn20}, where
$$ 
{\rm Tr}_2 = \dfrac{1}{\tilde{X}_1} 
\big\{p r \tilde{X}_1- \big[ r+1-(s+1+sr) \tilde{X} \big] \big\}
\quad  \textrm{and} \quad 
{\det}_2 = \dfrac{r}{\tilde{X}_1}(1-p \tilde{X}_1) := \omega_c^2.  
$$ 
Since 
$$
r+1- (s+1+sr ) \tilde{X} >0 \quad \textrm{for} \ \ 
0< \tilde{X} < \dfrac{1}{s+1}, 
$$ 
the positive equilibrium ${\rm E_1}$ is LAS if 
$\, {\rm Tr}_2<0 $ and $ {\det}_2>0$, which yield 
$$
p < \min\left\{\dfrac{r+1-(s+1+sr) \tilde{X}}{r \tilde{X}_1}, \, 
\dfrac{1}{\tilde{X}_1} \right\}. 
$$
In order to have a Hopf bifurcation at a critical point, we define 
\begin{equation}\label{Eqn26} 
p_{\rm H} = \dfrac{r+1-(s+1+sr ) \tilde{X}}{r \tilde{X}_1}.
\end{equation}
At this critical point $p_{\rm H}$, it requires $\det_2>0$, yielding 
\begin{equation}\label{Eqn27} 
\dfrac{1}{\tilde{X}_1} > p_{\rm H} 
\ \ \Longrightarrow \ \ 
\dfrac{1}{s+1+sr} < \tilde{X} < \dfrac{1}{s+1},  
\end{equation}
which yields $p_{\rm H}>1$ as expected. 

Moreover, at $p=p_{\rm H}$, the transversal condition is 
$$ 
{\rm Trans}_{\eqref{Eqn24}} = \dfrac{r}{2} > 0,  
$$  
and hence the Hopf bifurcation is transversal.

Next, introducing the affine transformation 
$$ 
\left(\begin{array}{c} x \\ y \end{array} \right) 
= \left(\begin{array}{c} 1 \\ 1 \end{array} \right) 
+ \left[ \begin{array}{cc} 1 & 0 \\[1.0ex] 
\dfrac{\tilde{X}_1}{sr \tilde{X}} & - \dfrac{\omega_c \tilde{X}_1 }
{s r \tilde{X}} \end{array} \right] 
\left(\begin{array}{c} u \\ v \end{array} \right),
$$ 
system \eqref{Eqn24} is transformed into 
\begin{equation}\label{Eqn28} 
\!\!\! 
\left\{ 
\begin{array}{rl}  
\dfrac{d u}{dt} = \!\!\! & \omega_c \, v 
-\, \dfrac{r}{\tilde{X}_1} 
\Big\{ r \tilde{X}_1 (1 +u)  
-\Big[r \tilde{X}_1 - \big(\tilde{X}_1 + r \tilde{X}\big) u 
+ \omega_c \tilde{X}_1 v \Big]
(1+u)^{\frac{r (1- s \tilde{X})+\tilde{X}_1}{r \tilde{X}_1}}, 
\\[3.0ex]  
\dfrac{d v}{dt} = \!\!\! & -\,\omega_c\, u 
- \dfrac{1}{\tilde{X}_1 \omega_c^2} 
\Big\{ r \tilde{X}_1 
+\Big[r (1-\tilde{X})-\tilde{X}_1 \Big] u + \omega_c \tilde{X}_1 v  
\\[2.0ex] 
& \qquad \qquad \qquad \ 
-\Big[r \tilde{X}_1 - (\tilde{X}_1+r \tilde{X}) u 
+ \omega_c \tilde{X}_1 v \Big]
(1+u)^{\frac{r (1-s \tilde{X}) + \tilde{X}_1}{r \tilde{X}_1}} \Big\}.
\end{array} 
\right. 
\end{equation} 

Expanding the right-hand side of \eqref{Eqn28} in 
Taylor series and applying the Maple program for computing 
the normal forms of Hopf and generalized Hopf 
bifurcations~\cite{yu1998,tianyu2013}, we obtain the first focus value 
$$ 
v_1 = \frac{-\,1}{16 r \tilde{X} \tilde{X}_1^3 \big[(s+1+sr) \tilde{X}-1\big]} 
       (r \tilde{X}+ \tilde{X}_1) \big[1+r-(s+1+sr) \tilde{X}) \big]
       \big[ 2 (s+1+sr) \tilde{X}- r - 2 \big].
$$ 
Except for the last factor, all factors in $v_1$ are positive by 
$\frac{1}{s+1+sr}<\tilde{X}<\frac{1}{s+1}$ as given in \eqref{Eqn27}. 
The last factor yields 
$$
2(s+1+sr) \tilde{X} - r -2 \in \left(-r,\, \dfrac{r(s-1)}{s+1} \right). 
$$ 
where $\frac{1}{s+1+sr}<\tilde{X}<\frac{1}{s+1}$ has been used. 
Therefore, when $s>1$, the last factor can have a unique zero, given by 
$$ 
\tilde{X}=\dfrac{r+2}{2(s+1+sr)} \in 
\left\{ \dfrac{1}{s+1+sr},\, \dfrac{1}{s+1} \right\}, \quad (1<s<r<s+1),
$$ 
at which $v_1=0$, and $v_2$ is then simplified as 
$$ 
v_2 \mid_{v_1=0} 
= \dfrac{(s+1)(r+2)^2 (s+1+r) (s+1+sr) \big[s+1+ (2 s-1) r \big]}
{576\, r^4 (s-1)^5} >0 \ \ \ \textrm{for} \ \ \ 1\!<\!s\!<\!r\!<\!s\!+\!1. 
$$ 
This shows that two small-amplitude limit cycles can be obtained 
near the positive equilibrium ${\rm E_1}$ from a Hopf bifurcation 
under proper perturbations on the parameters $p$ and $r$. 
 
This completes the proof of Proposition \ref{Prop4.2}. 
\end{proof}

\section{Generalized Hopf bifurcations generating five small-amplitude 
limit cycles}\label{Sec-5}

In this section, we develop a common focus-value formulation for the 
generalized Hopf analysis and consider successively the existence of three, 
four, and five small-amplitude limit cycles. We then derive necessary 
algebraic and admissibility conditions for codimension six and examine 
whether they can be satisfied.

The successive weak-focus conditions considered in this section are
summarized in Table~\ref{table2}, where $v_0$ denotes the zero-order focus
value associated with the Hopf critical point.

\begin{table}[!h]
\centering
\caption{Focus-value hierarchy for generalized Hopf bifurcations.}
\vspace*{0.15in}
\label{table2}
\begin{tabular}{|c|c|c|}
\hline
Codimension  & Vanishing focus values & First nonzero focus value \\ \hline
3 & $v_0=v_1=v_2=0$ & $v_3\neq 0$ \\
4 & $v_0=v_1=v_2=v_3=0$ & $v_4\neq 0$ \\
5 & $v_0=v_1=v_2=v_3=v_4=0$ & $v_5\neq 0$ \\ \hline
\end{tabular}
\end{table}

The general limit-cycle conclusion will be used through
Corollary~\ref{Coro5.1}, which is a direct specialization of
Lemma~4.1. Thus, the analysis below for each case is organized as follows: 
solving the focus-value equations, checking strict admissibility, 
verifying that the first nonvanishing focus value is nonzero, 
and determining the unfolding rank.

Throughout this section we use the independent parameter set:
\[
\big\{(a,\,m,\,n,\,g,\,D) \mid m>1,\ n>0,\ a>0,\ g>0,\ D>0 \big\}.
\]
For computational purposes, we also use
\begin{equation}\label{Eqn29}
m=1+a+\xi,\qquad n=\xi+\eta,\qquad \xi>0,\qquad \eta>0.
\end{equation}
The parametrization \eqref{Eqn29} does \emph{not} impose
$m-1-n\ge0$, since $m-1-n=a-\eta$ may have either sign.

\subsection{Reparametrization, Hopf conditions, and focus values}\label{Sec-5.1}

The reparametrization used in the symbolic calculation can be written 
directly as
\begin{equation}\label{Eqn30}
p=m,\qquad q=n,\qquad r=\frac{g+1}{a},\qquad s=\frac{1+D}{g},\qquad
\widetilde X=\frac{ag}{(a+1)(g+1)+aD}.
\end{equation}
The two quantities $c$ and $h$, used in the original derivation, 
are completely determined by $(a,m,n)$:
$$
c=m-1-a, \qquad h=a-m+1+n, \qquad c+h=n.
$$
They are auxiliary identities; all focus-value expressions 
below are regarded as functions of $(a,m,n,g,D)$.

At a Hopf critical point,
\begin{equation*}
K=\frac{ng}{a-m+1+n},\qquad
B_{\rm H}=\frac{m-1-a}{a-m+1+n}.
\end{equation*}
Thus, the positivity of $K$ and $B_{\rm H}$ requires
\begin{equation}\label{Eqn31}
m-1-n<a<m-1. 
\end{equation} 
Combining this with the remaining Hopf restrictions gives 
the following admissibility conditions: 
\begin{equation}\label{Eqn32}
\begin{array}{ll}
\mathrm{(A)}&
\displaystyle
\max\left\{m-1-n,\frac{(m-1)(g+1)}{1+D+g}, \frac{g(g+1)}{1+D+g}\right\}
<a< \min\left\{m-1,\frac{g(g+1)}{1+D}\right\},\\[3.0ex]
\mathrm{(B)}&
\displaystyle
\max\left\{m-1-n,\frac{g(g+1)}{1+D+g}\right\}
<a< \min\left\{m-1,\frac{g(g+1)}{1+D}, \frac{(m-1)(g+1)}{1+D+g}\right\}.
\end{array}
\end{equation}
In both cases, $m>1$, $n>0$, $g>0$, and $D>0$ need to be satisfied.

The transversality quantity and the critical frequency become
\begin{equation}\label{Eqn33}
{\rm Trans}=-\,\frac{(g+1)(a-m+1+n)}{2a}<0, \qquad
\omega_c=\frac{n\sqrt D}{a-m+1+n}>0.
\end{equation}

To compute the general focus values of system~\eqref{Eqn13}, we first
multiply its vector field by the positive factor $(1+Bx^q)$, which is 
equivalent to a regular time rescaling in a neighborhood of the positive 
equilibrium.
Using the reparametrization \eqref{Eqn30} and the condition \eqref{Eqn33}, we
then introduce the affine transformation,
$$
\left(\begin{array}{c}x\\y\end{array} \right)
=
\left(\begin{array}{c} 1\\1\end{array} \right)
+
\left[\begin{array}{cc}
1&0\\[0.5ex]
\dfrac{1}{1+D}&-\dfrac{\sqrt D}{1+D}
\end{array} \right] 
\left(\begin{array}{c} x_1\\x_2\end{array} \right),
$$
into \eqref{Eqn13}, yielding the system 
\begin{equation}\label{Eqn34}
\left\{ \  
\begin{aligned}
\dfrac{dx_1}{dt} ={}&\omega_c x_2
+\frac{1}{a(a-m+1+n)}
\Big\{
n\big[(g+1)(1-a x_1)+a\sqrt D\,x_2\big](1+x_1)^m\\
&\hspace{3.5em}
 +(g+1)(a-m+1)(1+x_1)^{n+1}
 -(g+1)(a-m+1+n)(1+x_1)
\Big\},\\[1ex]
\dfrac{dx_2}{dt} ={}&-\omega_c x_1
+\frac{1}{\sqrt D\,a(a-m+1+n)}
\Big\{
n\big[(g+1)(1-a x_1)+a\sqrt D\,x_2\big](1+x_1)^m\\
&\hspace{3.5em}
-\big[aD x_1+(g+1)(1+x_1)+a\sqrt D\,x_2\big]
 \big[n+(m-1-a)(1+x_1)^{n-1}\big] \Big\},
\end{aligned} 
\right. 
\end{equation}
whose linear part is in the standard Jordan canonical form. 

Thus, the Taylor expansion of \eqref{Eqn34} and 
the subsequent normal form computation involve only \((a,m,n,g,D)\).
Applying the Maple programs for computing the normal forms of Hopf and 
generalized Hopf bifurcations (see, e.g.,~\cite{yu1998,tianyu2013}), 
we obtain the first six focus values:  
\begin{equation}\label{Eqn35}
\begin{aligned}
v_1&=-\frac{n}{16(a-m+1+n)a^2D}\,V_1,\\
v_2&= \frac{n}{2304(a-m+1+n)a^4D^3}\,V_2,\\
v_3&=-\frac{n}{4423680(a-m+1+n)a^6D^5}\,V_3,\\
v_4&= \frac{n}{7431782400(a-m+1+n)a^8D^7}\,V_4,\\
v_5&=-\frac{n}{21403533312000(a-m+1+n)a^{10}D^9}\,V_5,\\
v_6&= \frac{n}{2768761069240320000(a-m+1+n)a^{12}D^{11}}\,V_6,
\end{aligned}
\end{equation}
where \(V_k=V_k(a,m,n,g,D)\) denotes the polynomial numerator 
of \(v_k\). All prefactors in \eqref{Eqn35} are nonzero in 
both strict Hopf bifurcation regions. Hence, $v_k=0$ is equivalent to $V_k=0$.

The first focus numerator \(V_1\) is linear in \(D\):
\begin{equation}\label{Eqn36}
V_1= N_D - W \, D, 
\end{equation}
where
\begin{equation}\label{Eqn37}  
\begin{array}{rl} 
N_D=\!\!\!\!& (g+1)\big[a(a+1)+(m-1-a)(a-m+1+n)\big] \\[1.0ex] 
&\times\big[a(a+1)g+(m-1-a)(a-m+1+n)(1+g)\big], \\[1.0ex] 
W=\!\!\!\! & (m-1-a) (g+1) (a-m+1+n)^2 \\[1.0ex]  
&\qquad  -(m-1-a) (a-m+1+n) (gm+m-a) +a^2 (a+1) g. 
\end{array} 
\end{equation}
By \eqref{Eqn31}, we have $N_D>0$. Therefore, whenever $W\ne0$,
\begin{equation}\label{Eqn38}
D=\frac{N_D}{W},\quad D>0 \quad \Longleftrightarrow \quad W>0.
\end{equation}
This exact elimination of $D$ is the starting point for the subsequent
computations. The higher \(V_k\) are lengthy and are omitted for brevity. 

Under \eqref{Eqn29}, Case~$\mathrm{(A)}$ of the Hopf bifurcation 
can be rewritten as 
\begin{equation}\label{Eqn39}
A_1=aD-\xi(g+1),\quad A_2=a(D+g+1)-g(g+1),\quad A_3=g(g+1)-a(D+1),
\end{equation}
with
\begin{equation}\label{Eqn40}
a,\, \xi,\, \eta >0\quad \textrm{and} \ \ 
\Omega_1 := \big\{g>0,\, D>0,\, A_1>0,\, A_2>0,\, A_3>0\big\} >0,
\end{equation}
and $A_2 \!+\! A_3 \!=\! ag \!>\!0$. These margins are particularly 
convenient for the strict-interior constructions.

Substituting $D=\frac{N_D}{W}$ into the Case~$\mathrm{(A)}$ margins
\eqref{Eqn39} gives 
\begin{equation}\label{Eqn41} 
\begin{array}{ll}
A_1 =a\dfrac{N_D}{W}-\xi(g+1) =\dfrac{aN_D-\xi(g+1)W}{W}, \\[2.5ex]
A_2 =a\left(\dfrac{N_D}{W}+g+1\right)-g(g+1)
=\dfrac{aN_D+ (g+1)(a-g)W}{W}, \\[2.5ex]
A_3 =g(g+1)-a\left(\dfrac{N_D}{W}+1\right)
=\dfrac{\big[g(g+1)-a\big] W-aN_D}{W}.
\end{array}
\end{equation} 
Since $N_D$ contains the factor $g+1$, the following are
polynomials in $(a,m,n,g)$:
\begin{equation}\label{Eqn42}
\begin{aligned}
P_1& =\frac{aN_D-\xi(g+1)W}{g+1},\\
P_2& =\frac{aN_D+ (g+1)(a-g)W}{g+1},\\
P_3& =\big[g(g+1)-a\big]W-aN_D.
\end{aligned}
\end{equation}
Consequently,
\begin{equation*}
A_1= \frac{(g+1)P_1}{W}, \qquad A_2= \frac{(g+1)P_2}{W}, \qquad
A_3= \frac{P_3}{W},
\end{equation*}
and hence
\begin{equation}\label{Eqn43}
\Omega_1 >0 \quad \Longleftrightarrow \quad
g>0,\ W>0,\ P_1>0,\ P_2>0,\ P_3>0.
\end{equation}
These polynomial sign conditions will be used directly in
Subsection~\ref{subsec:3LC-poly-admissibility}.

\subsection{Weak-focus conditions and admissible unfolding}\label{Sec-5.2}

The following consequence of Lemma~4.1 will be used repeatedly.

\begin{corollary}\label{Coro5.1}
Let $C_k$, $k\ge2$, be a strict admissible Hopf critical point of
system~\eqref{Eqn13}.  Suppose
\[
v_1(C_k)=\cdots=v_{k-1}(C_k)=0,\qquad v_k(C_k)\ne0,
\]
and let $\mu_2,\ldots,\mu_k$ be the model parameters such that
\begin{equation}\label{Eqn44}
\det\left[
\frac{\partial(v_1,\ldots,v_{k-1})}
{\partial(\mu_2,\ldots,\mu_k)}
\right]_{C_k}\ne0.
\end{equation}
If $\mu_1$ is a Hopf unfolding parameter for which
$\frac{\partial v_0}{\partial\mu_1}\ne0$ at $C_k$, then $C_k$ is a 
nondegenerate generalized Hopf point of codimension $k$. 
Under sufficiently small
admissible perturbations of $(\mu_1,\ldots,\mu_k)$, $k$ distinct
small-amplitude limit cycles bifurcate from ${\rm E}_1$.
\end{corollary}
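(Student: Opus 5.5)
The plan is to reduce Corollary~\ref{Coro5.1} directly to Lemma~\ref{Lem4.1} by assembling the full Jacobian of $(v_0,v_1,\ldots,v_{k-1})$ with respect to $(\mu_1,\ldots,\mu_k)$. The key observation is that it is block triangular once the Hopf condition is used. On the Hopf surface we have $v_0\equiv 0$. In the present setting $v_0$ is, up to a positive factor, the real part of the eigenvalues, $\frac{1}{2}{\rm Tr}_1$. Its derivative therefore involves only the trace and, by \eqref{Eqn21} and \eqref{Eqn33}, is nonzero with respect to the Hopf unfolding parameter $\mu_1$ (for instance $B$, or $a$ through $B_{\rm H}$).

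\textbf{Choosing coordinates.} I will first arrange the parameters so that $\mu_2,\ldots,\mu_k$ parametrize the Hopf surface $\{v_0=0\}$ near $C_k$. Concretely, take $(\mu_2,\ldots,\mu_k)$ among $(a,m,n,g,D)$ with $B$ determined by $B=B_{\rm H}$, and take $\mu_1=B-B_{\rm H}$ as a transversal coordinate. Then $\partial v_0/\partial\mu_j=0$ for $j\ge 2$ at $C_k$, and $\partial v_0/\partial\mu_1\neq0$ by hypothesis. The Jacobian in Lemma~\ref{Lem4.1} is then block lower triangular, so its determinant equals $\frac{\partial v_0}{\partial\mu_1}$ times the determinant \eqref{Eqn44}, and hence is nonzero. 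If the paper's $\mu_1$ is not literally transversal in this form, I will instead invoke the implicit function theorem on $v_0(\mu_1,\mu_2,\ldots,\mu_k)=0$ to write $\mu_1=\phi(\mu_2,\ldots,\mu_k)$ and change variables. Because this change of variables is a local diffeomorphism, the nonvanishing of the determinant is preserved.

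\textbf{Applying Lemma~\ref{Lem4.1}.} With $v_0=\cdots=v_{k-1}=0$ and $v_k\ne0$ at $C_k$, together with the full-rank condition, Lemma~\ref{Lem4.1} yields $k$ small-amplitude limit cycles for parameters near $C_k$.

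\textbf{Making the cycles explicit and admissible.} To make ``distinct'' concrete, I will recall the standard argument. The map $\mu\mapsto(v_0,\ldots,v_{k-1})$ is a local diffeomorphism, so one can choose $v_{k-1},v_{k-2},\ldots,v_0$ successively small with alternating signs relative to $v_k$, and $|v_{j-1}|\ll|v_j|$. Each sign change produces a new simple positive root of the truncated polynomial $v_0+v_1\rho+\cdots+v_k\rho^k$ in $\rho=r^2$. Simple roots persist under the higher-order terms of the Poincaré return map by the implicit function theorem, which gives $k$ hyperbolic cycles. Admissibility is preserved because $C_k$ is a strict admissible point: the region defined by \eqref{Eqn32} or \eqref{Eqn43} is open, so it contains a neighbourhood of $C_k$, and all sufficiently small perturbations stay admissible. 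Nondegeneracy of codimension $k$ then follows by definition from $v_k\ne0$ and the rank condition.

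\textbf{Main obstacle.} The only delicate point is checking that the chosen $\mu_1$ is genuinely independent of $\mu_2,\ldots,\mu_k$ within the model's parameter set. In particular, $D$ or $B$ must not be eliminated in a way that makes $v_0$ depend on the same coordinates used in \eqref{Eqn44}. I would handle this with the implicit-function reparametrization described above, since it makes the triangular structure hold without further hypotheses.
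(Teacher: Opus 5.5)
Your proposal is essentially the paper's proof. The paper also factors the full Jacobian of Lemma~\ref{Lem4.1} into $\partial v_0/\partial\mu_1$ times the determinant in \eqref{Eqn44}, notes that strict admissibility survives small perturbations, and applies Lemma~\ref{Lem4.1}; your choice of $\mu_2,\ldots,\mu_k$ as coordinates on the Hopf surface $B=B_{\rm H}$, with $\mu_1=B-B_{\rm H}$, is exactly what makes that factorization valid. One caution on your fallback, writing $v=(v_1,\ldots,v_{k-1})$ and $\mu'=(\mu_2,\ldots,\mu_k)$: the implicit-function change of variables preserves the full determinant, but it replaces the block in \eqref{Eqn44} by the Schur complement $\partial v/\partial\mu'-(\partial v/\partial\mu_1)(\partial v_0/\partial\mu')/(\partial v_0/\partial\mu_1)$, which is the derivative along the Hopf surface, so \eqref{Eqn44} must be read in Hopf-surface coordinates (as in the paper, where the $v_j$ are computed in $(a,m,n,g,D)$) and not with $\mu_1$ frozen in arbitrary coordinates.
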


\begin{proof}
At the Hopf critical point, the full unfolding Jacobian in Lemma~4.1 
factors into the
nonzero Hopf transversality factor $\frac{\partial v_0}{\partial\mu_1}$ 
and the reduced determinant in \eqref{Eqn44}. Hence, the full
Jacobian is nonsingular. Since $C_k$ is a strict interior point of the
admissible Hopf bifurcation region, all defining inequalities remain strict 
under sufficiently small parameter perturbations. Lemma~4.1 therefore 
applies and yields $k$ small-amplitude limit cycles.
\end{proof}

For the cases considered below, the algebraic hierarchy is
\[
\begin{aligned}
C_3:&\quad V_1=V_2=0, &&V_3\ne0,\\
C_4:&\quad V_1=V_2=V_3=0, &&V_4\ne0,\\
C_5:&\quad V_1=V_2=V_3=V_4=0, &&V_5\ne0.
\end{aligned}
\]
A codimension-six generalized Hopf point would additionally require $V_5=0$. 
In every case the focus
equations are supplemented by the strict inequalities in \eqref{Eqn32}.

\subsection{Codimension-three Hopf bifurcation generating 
three small-amplitude limit cycles}\label{Sec-5.3}

In this subsection, we prove the existence of a codimension-three Hopf 
bifurcation, organized as follows. We first reduce the first two focus 
value conditions algebraically and obtain an order-three weak focus 
satisfying Case (A). Then, we characterize the admissible regions in 
Cases (A) and (B) and study the resulting bounds on the exponents $p$ and $q$. 
Finally, we analyze the ordinary and exceptional algebraic branches generated 
by the reduced focus equations. The admissibility conditions are then 
converted into polynomial inequalities, and the exceptional cases are 
ruled out using exact real-algebraic arguments.

We begin with the first genuinely generalized Hopf problem. 
A weak focus of order three is characterized by
\begin{equation*}
V_1=V_2=0,\qquad V_3\ne0,
\end{equation*}
together with a rank-two unfolding condition. The point of the calculation 
is not merely to find a numerical zero of two large expressions.  
The parametrization above makes it possible to reduce the focus equations 
algebraically and, at the same time, to keep the Hopf inequalities 
visible throughout the reduction.

\subsubsection{Algebraic reduction of the first two focus conditions}
\label{Sec-5.3.1}

Since $V_1$ is linear in $D$, Eq.~\eqref{Eqn38} eliminates $D$ 
exactly. Substitution into $V_2$ gives a rational expression 
whose denominator has fixed sign on each strict Hopf bifurcation region.  
Removing this denominator leaves a polynomial equation which is 
quadratic in $g$,
\begin{equation}\label{Eqn45}
Q_2(a,m,n;g)=b_2(a,m,n)g^2+b_1(a,m,n)g+b_0(a,m,n)=0.
\end{equation}
Its positive real branches are separated algebraically by
\begin{equation}\label{Eqn46}
\begin{array}{lll}
\mathrm{(i)} & b_2 b_0 < 0:& \displaystyle
g_+ =\frac{-b_1 + {\rm sign}(b_2) \sqrt{\Delta_g}}{2b_2};\qquad
\Delta_g=b_1^2-4b_2b_0\ge0;\\[2ex]
\mathrm{(ii)} & b_2 b_0 > 0,\ b_2 b_0 b_1<0:& \displaystyle
g_\pm =\frac{-b_1\pm\sqrt{\Delta_g}}{2b_2};\\[2ex]
\mathrm{(iii)} & b_2=0,\ b_1\ne0:& \displaystyle
g=-\frac{b_0}{b_1},\qquad b_1b_0\le0;\\[2ex]
\mathrm{(iv)} & b_2=b_1=0:& b_0=0.
\end{array}
\end{equation}
Using the polynomial reconstruction of the Case~$\mathrm{(A)}$ margins
established above after Eq.~\eqref{Eqn39}, the strict
admissibility conditions remain visible throughout this reduction.

Thus, the simultaneous equations $V_1=V_2=0$ reduce to one explicit 
algebraic branch equation and the reconstruction $D=\frac{N_D}{W}$. 
This reduction is important in the subsequent global analysis: 
admissibility can be tested on the real branches of \eqref{Eqn45}, 
rather than by a blind search in the original five-dimensional 
parameter space.

Under $m=1+a+\xi$, $n=\xi+\eta$, Case~$\mathrm{(A)}$ is described by the 
three margin polynomials \eqref{Eqn39}. Hence, a candidate obtained 
from \eqref{Eqn45} is a strict interior Hopf critical point precisely 
when
\[
a,\ \xi,\  \eta,\  \Omega_1 >0.
\]
This polynomial description is the basis of both the existence proof 
and the exponent optimization below.

\subsubsection{A strict-interior Case~\(\mathrm{(A)}\) point}\label{Sec-5.3.2}

The algebraic reductions below produce highly accurate numerical candidates, 
but a numerical approximation alone does not rigorously prove the existence
of an exact common zero of the focus equations.  We therefore use validated
interval computation to convert such candidates into rigorous statements.
More precisely, we employ the Krawczyk 
method~\cite{krawczyk1969newton,moore2009introduction,rump2010verification}, 
a standard interval technique
which simultaneously certifies existence, local uniqueness, and
nonsingularity of a zero of a nonlinear system.

Let $F:\mathbb{R}^d\to\mathbb{R}^d$. Denote $X\subset\mathbb{R}^d$ as an
interval box with center $x_0$, and let $Y$ be a numerical approximation to
the inverse Jacobian $(F'(x_0))^{-1}$. 
If $[F'(X)]$ is an outward-rounded interval enclosure of
the Jacobian on $X$, define the Krawczyk operator by
\[
K(x_0,X)=x_0-YF(x_0)+\bigl(I-Y[F'(X)]\bigr)(X-x_0).
\]
The strict inclusion
\[
K(x_0,X)\subset X^\circ,
\]
where $X^\circ$ represents the interior of $X$, implies that $F=0$ has a
unique zero in $X$; see, for example,
\cite{moore2009introduction,rump2010verification}.  In the applications
below, $F$ is formed from the required focus equations, while the strict
Hopf inequalities and the first nonzero focus value are verified separately
by outward-rounded interval evaluation on the same certified box. Thus, the
Krawczyk step supplies the rigorous bridge from a numerically located
candidate to an exact weak-focus point.  All interval certificates below use
rational outward-rounded boxes.

\begin{proposition}\label{Prop5.2}
There exist admissible parameters in Hopf Case~\(\mathrm{(A)}\) for which
\[
v_1=v_2=0,\quad v_3\ne0, \quad
\operatorname{rank} \left[ \frac{\partial(v_1,v_2)}{\partial(D,g)}\right]=2.
\]
Consequently, the positive equilibrium is a weak focus of order three 
and three small-amplitude limit cycles bifurcate under sufficiently 
small admissible perturbations.
\end{proposition}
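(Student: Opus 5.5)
The plan is to produce one explicit certified point and then invoke Corollary~\ref{Coro5.1} with $k=3$. First fix the exponent data $(a,\xi,\eta)$, equivalently $(a,m,n)$ via \eqref{Eqn29}, at rational values chosen so that \eqref{Eqn31} holds automatically. Then use the reduction of Subsection~\ref{Sec-5.3.1}. Eliminate $D$ exactly by $D=N_D/W$ from \eqref{Eqn38}. Solve the quadratic branch equation \eqref{Eqn45} for a positive root $g^\ast$, choosing the branch from \eqref{Eqn46} for which $W>0$. Finally, check the polynomial margins \eqref{Eqn43}, namely $g>0$, $W>0$, $P_1>0$, $P_2>0$, $P_3>0$, so that the reconstructed $D^\ast$ lies strictly inside Case~(A).

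A candidate is located by scanning $(a,\xi,\eta)$ over a coarse grid. At each grid point, evaluate the discriminant $\Delta_g$ and the signs of $b_2,b_1,b_0$, and keep a point whose root $g^\ast$ makes all margins comfortably positive rather than marginal. The strict inequalities are what allow Corollary~\ref{Coro5.1} to apply to nearby perturbations.

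Rigor comes in three parts.
\begin{enumerate}
\item[(i)] Apply the Krawczyk test to $F(D,g)=(V_1,V_2)$ with $(a,m,n)$ held fixed. Take a small rational box $X$ around $(D^\ast,g^\ast)$ and verify $K(x_0,X)\subset X^\circ$ using outward-rounded interval arithmetic. This yields a unique exact zero in $X$, and nonsingularity of the interval Jacobian on $X$ at the same time. Since $V_1,V_2$ differ from $v_1,v_2$ only by the nonvanishing prefactors in \eqref{Eqn35}, this gives $v_1=v_2=0$ and $\operatorname{rank}\,\partial(v_1,v_2)/\partial(D,g)=2$. Equivalently, one can certify the scalar root of $Q_2$ in $g$ and carry the factors through the chain rule; I would use the two-dimensional Krawczyk test directly because it delivers the rank statement for free.
\item[(ii)] Evaluate $V_3$ on the whole box $X$ in interval arithmetic and check that the enclosure excludes $0$. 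This gives $v_3\neq0$.
\item[(iii)] Evaluate $W$ and $P_1,P_2,P_3$ on $X$ in interval arithmetic and check that they are strictly positive. This confirms strict Case~(A) admissibility of the exact zero, since by \eqref{Eqn43} the polynomial margins are equivalent to $\Omega_1>0$.
\end{enumerate}

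To finish, take $\mu_1$ to be the Hopf parameter. The natural choice is $B$ near $B_{\rm H}$, which gives $\partial v_0/\partial\mu_1\neq0$ from the trace formula in \eqref{Eqn21}, since $q>q_c$ in the Hopf region. Alternatively, one can use the transversality quantity in \eqref{Eqn33}, which is nonzero by \eqref{Eqn31}. Take $(\mu_2,\mu_3)=(D,g)$. Corollary~\ref{Coro5.1} then gives three small-amplitude limit cycles.

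The main obstacle is step (ii), together with the size of $V_3$. The polynomial is very large, so naive interval evaluation overestimates badly, and the enclosure may fail to exclude $0$ unless the box is very tight. I would first shrink $X$ by a few Newton iterations in high precision. Then I would evaluate $V_3$ in centered (mean-value) form, using a Horner arrangement in $D$ and $g$ with exact rational coefficients. If the enclosure still contains $0$, I would move to a grid point where $|V_3|$ is visibly large relative to its scale.
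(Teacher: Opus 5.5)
Your proposal matches the paper's proof. The paper fixes the rational slice $a=5626149911/10^{10}$, $\xi=6916176059/10^{10}$, $\eta=3083823941/10^{10}$ (so $n=1$ exactly), applies a two-dimensional Krawczyk test in $(D,g)$ to $v_1=v_2=0$, and then on the certified box encloses the Case~(A) margins, $v_3$, and $\det[\partial(v_1,v_2)/\partial(D,g)]$ away from zero before invoking Lemma~\ref{Lem4.1} with $k=3$. Your only departures are cosmetic and valid: you obtain rank two from the Krawczyk inclusion itself, which forces nonsingularity of the interval Jacobian, and you check admissibility through $W,P_1,P_2,P_3$ rather than $T_1,\dots,T_5$, which is equivalent at the exact zero where $D=N_D/W$.
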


\begin{proof}
Fix the rational slice, 
\[
a=\frac{5626149911}{10^{10}},\qquad
\xi=\frac{6916176059}{10^{10}},\qquad
\eta=\frac{3083823941}{10^{10}}.
\]
Then, 
\[
n=\xi+\eta=1, \qquad m=1+a+\xi=\frac{2254232597}{10^9} 
\]
exactly. With $(D,g)$ as unknowns, denote the center of the Krawczyk box
by $C_3=(D_{C_3},g_{C_3})$. A two-dimensional Krawczyk operator 
applied to the actual normalized equations $v_1=v_2=0$ gives a unique 
zero in the rational box of radius $10^{-34}$ centered at $C_3$, where
\[
D_{C_3}\in\left[\frac{458517237}{683579},\frac{333169691}{496705}\right],
\quad
g_{C_3}\in\left[\frac{6956059}{364615},\frac{12880256}{675143}\right].
\]
Before recording the interval bounds, 
define the five original Case~$\mathrm{(A)}$ 
to quantify the distance from the corresponding boundary conditions:
\begin{equation}\label{Eqn47}
\begin{array}{llll}
T_1 =a-(m-1-n), &\quad  T_2 =a-\dfrac{(m-1)(g+1)}{1+D+g}, & \quad 
T_3 =a-\dfrac{g(g+1)}{1+D+g},\\[2.5ex] 
T_4 =m-1-a, & \quad T_5 =\dfrac{g(g+1)}{1+D}-a.
\end{array}
\end{equation}
Thus, together with positivity of the parameters, strict Case~$\mathrm{(A)}$
admissibility is equivalent to $T_k>0$, $k=1,\ldots,5$.
At this unique zero, interval evaluation gives
\[
\begin{array}{llll}
T_1 \in\left[\dfrac{997}{3233},\dfrac{1100}{3567}\right], & \quad 
T_2 \in\left[\dfrac{416002}{790633},\dfrac{463011}{879976}\right], & \quad  
T_3 \in\left[\dfrac{7246}{888415},\dfrac{3270}{400927}\right],\\[2.5ex] 
T_4 \in\left[\dfrac{2467}{3567},\dfrac{690529}{998426}\right], & \quad  
T_5 \in\left[\dfrac{5335}{702861},\dfrac{581}{76544}\right],
\end{array}
\]
so the certified zero lies strictly inside Case~$\mathrm{(A)}$. 
The equivalent reduced margins satisfy
\[
A_1\in\left[\frac{81059}{223},\frac{2573169}{7079}\right],
\quad A_2\in\left[\frac{30449}{5404},\frac{24437}{4337}\right],
\quad A_3\in\left[\frac{8969}{1759},\frac{29069}{5701}\right].
\]
Moreover, the first nonzero normalized focus value is rigorously enclosed by
\[
v_3\in
\left[\frac{1447}{545884},\frac{1336}{504009}\right],
\]
and the unfolding determinant satisfies
\[
\det \left[ \frac{\partial(v_1,v_2)}{\partial(D,g)}\right] 
\in\left[\frac{-\,1}{1497889},\frac{-\,1}{1497890}\right].
\]
Hence $v_1=v_2=0$, $v_3\ne0$, and the rank is two at a strict 
admissible point.  Lemma~\ref{Lem4.1} with $k=3$ yields three 
small-amplitude limit cycles.
\end{proof}

\subsubsection{Exponent bounds and the geometry of the two Hopf bifurcation 
regions}\label{Sec-5.3.3} 

For Case~$\mathrm{(A)}$, we use the margins \(A_1,A_2,A_3\)
defined in \eqref{Eqn39}; after eliminating \(D\), the originally 
complex admissibility conditions are transformed into polynomial sign 
conditions in \eqref{Eqn40} that are easy to perform symbolic analysis.

For Case~$\mathrm{(B)}$, with the condition given in \eqref{Eqn32}, 
define
\begin{equation}\label{Eqn48}
U_1=\xi(g+1)-aD, \quad U_2=g(g+1)-a(1+D), \quad L=a(1+D+g)-g(g+1).
\end{equation}
Then, 
\begin{equation*}
\Omega_B=
\big\{(a,\,\xi,\,\eta,\,g,\,D) \mid
a, \ \xi, \ \eta, \ g, \ D, \ U_1, \ U_2, \ L>0 \big\},
\end{equation*}
and the remaining original inequality follows from
$$
n(1+D+g)-(m-1)D = (\xi+\eta)(1+D+g)-(a+\xi)D\notag =U_1+\eta(1+D+g)>0.
$$
Hence, the exponent problem is to determine the lower bounds of 
\((m,n)\) on the admissible conditions of the codimension-three 
Hopf bifurcation: 
\begin{equation*}
\inf_{\mathcal C_3\cap\Omega_A}(m,n),
\qquad
\inf_{\mathcal C_3\cap\Omega_B}(m,n),
\qquad
\mathcal C_3:=\{V_1=V_2=0,\ V_3\ne0\}.
\end{equation*}

\subsubsection{The exceptional value \(n=\frac12\)}
\label{subsec:3LC-half-exact}\label{Sec-5.3.4} 

Before considering the Case~$\mathrm{(A)}$ boundary minimum, we dispose of
the exceptional value \(n=\frac12\). With \(n=\xi+\eta\), the
coefficient-degenerate case is precisely condition \(\mathrm{(iv)}\) in
\eqref{Eqn46}, namely
\[
 b_2=b_1=b_0=0.
\]
At \(n=\frac12\), the triple greatest common divisor (gcd) 
of these coefficient equations contains
the factor \(1+2a+2\xi\), which is strictly positive for \(a,\xi>0\).
After removing this known common factor from the primitive polynomial
parts, one obtains
\[
\gcd(b_2,b_1)=1,\qquad
\gcd(b_2,b_0)=1,\qquad
\gcd(b_1,b_0)=\xi(2\xi-1).
\]
Since \(0<\xi<n=\frac12\), none of the remaining factors can vanish.
Exact elimination of \(\xi\) then gives
\[
a^{13}(a+1)^5=0,
\]
which has no solution for \(a>0\). Hence, the exceptional value
\(n=\frac12\) cannot produce an admissible codimension-three point.

The calculations in the next two subsections lead to two distinguished
boundary candidates. The Case~$\mathrm{(A)}$ pair is obtained from the
asymptotic boundary $A_3=0$ together with the reduced weak-focus equations,
whereas the Case~$\mathrm{(B)}$ pair is obtained from the corresponding
double-boundary system. We record the two values here for comparison and
derive them explicitly below.
\begin{equation}\label{Eqn49}
\begin{array}{ll} 
(m_A^*,n_A^*)\approx(1.8919320709,\,0.7959683537), \\[1.0ex] 
(m_B^*,n_B^*)\approx(87.8670322412,\,61.7688995629).
\end{array}
\end{equation}

\subsubsection{Asymptotic boundary mechanism and the Case~\(\mathrm{(A)}\) 
exponent problem}\label{subsec:3LC-global-detail}\label{Sec-5.3.5} 

Along the lower Case~$\mathrm{(A)}$ component,
\begin{equation}\label{Eqn50}
g\to+\infty,
\qquad A_3\to0^+.
\end{equation}
The limiting surface is therefore 
\begin{equation}\label{Eqn51}
A_3=0
\quad\Longleftrightarrow\quad
g(g+1)=a(D+1)
\quad\Longleftrightarrow\quad
D=\frac{g(g+1)}a-1.
\end{equation}
On this surface,
\begin{equation}\label{Eqn52}
A_2=ag,\quad A_1=g(g+1)-a-\xi(g+1)=(g+1)(g-\xi)-a.
\end{equation}
Thus, the strict admissible side of this surface is characterized by 
\begin{equation*}
a>0,\quad \xi>0,\quad \eta>0,\quad g>0, \quad D>0,\quad (g+1)(g-\xi)-a>0.
\end{equation*}

Substituting \eqref{Eqn51} into the reduced 
second-focus equation and imposing stationarity of $ n=\xi+\eta$ 
leads, after elimination, to the following expression: 
\begin{align}\label{Eqn53}
P_{\Psi}(\hat{s})={}&59062500000\hat{s}^{20}-303727809375 \hat{s}^{18}
+775670385375 \hat{s}^{16} -1203893444732 \hat{s}^{14}\notag\\
&+1653881842661 \hat{s}^{12}-1718385644640 \hat{s}^{10}+1291417388777 \hat{s}^8
-759366654992 \hat{s}^6\notag\\
&+324427965219 \hat{s}^4-91173112245 \hat{s}^2+14601770640.
\end{align}
Here, the elimination variable $\hat{s}$ is the stationary value of
$n=\xi+\eta$ on the boundary family; thus, for every admissible root
of \eqref{Eqn53}, one has $\hat{s}=n$ at the corresponding stationary
boundary point.  Its real roots are
\begin{equation*}
\hat{s}=\pm0.7959683537\cdots, \qquad \hat{s}=\pm0.8277388406\cdots.
\end{equation*}
Since $n>0$, only the two positive roots need be considered.  
Back-substitution
into the boundary equations and the strict Case~$\mathrm{(A)}$ sign
conditions identifies $\hat{s}=0.7959683537\ldots$ with the lower admissible
branch relevant to the minimum-exponent problem; the root
$\hat{s}=0.8277388406\ldots$ belongs to the other stationary branch and does
not give the lower boundary value.  Consequently,
\begin{equation}\label{Eqn54}
n_A^*=\hat{s}=0.7959683537\cdots.
\end{equation}
Back-substitution of this root into the remaining boundary equations
then gives
\begin{equation*}
m_A^*=1.8919320709\cdots.
\end{equation*}
Since \eqref{Eqn50} holds,
\begin{equation*}
n\downarrow n_A^*,\qquad g\uparrow\infty, \qquad A_3\downarrow0,
\end{equation*}
so the boundary value is an infimum rather than a finite strict point, 
as shown in Figure~\ref{Fig1}.

\begin{figure}[!h]
\centering
\includegraphics[width=0.72\textwidth]{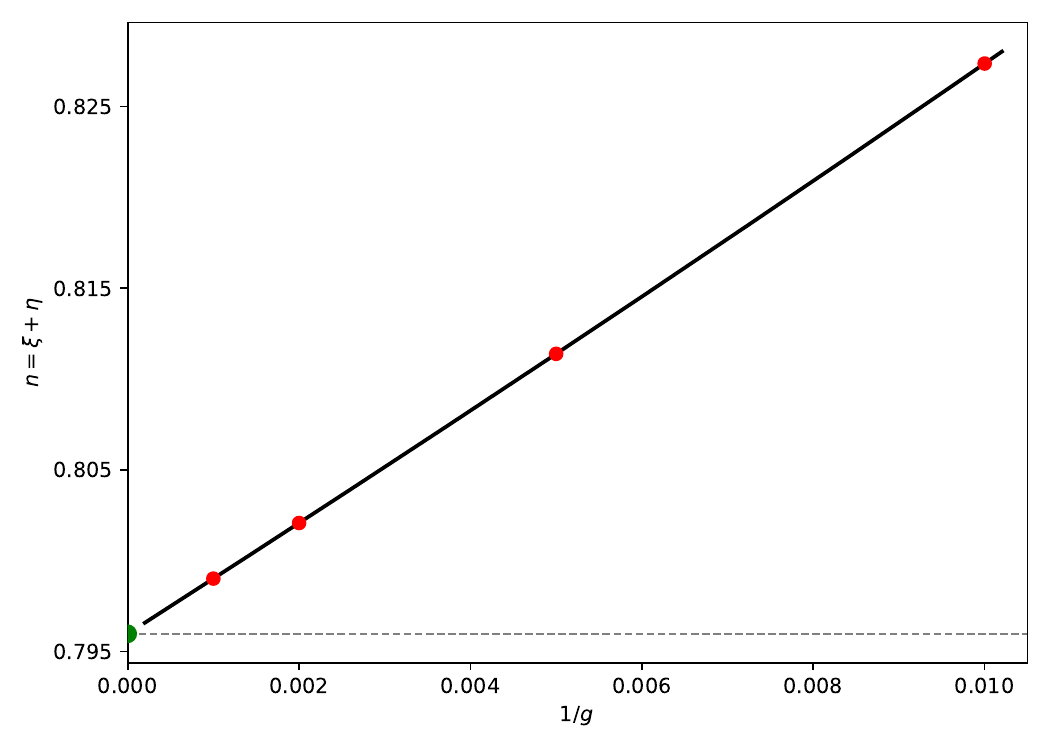}
\caption{Case~$\mathrm{(A)}$ codimension-three Hopf bifurcation with 
finite continuation toward the asymptotic 
boundary candidate. The ordinate is $n=\xi+\eta$ and the abscissa is 
$1/g$, so the limiting regime $g\to\infty$ is represented by the finite 
endpoint $1/g=0$. The solid curve joins the retained finite admissible 
continuation values. The dashed horizontal line is the isolated boundary 
stationary value $n_A^*$, and the diamond at $1/g=0$ marks this limiting 
boundary value. Along the computed branch, $n$ approaches $n_A^*$ from 
above while $A_3\to0^+$. The figure illustrates the asymptotic boundary 
mechanism behind the Case~$\mathrm{(A)}$ optimization; it does not 
by itself establish that $n_A^*$ is the unconditional global infimum 
over all finite strict Case~$\mathrm{(A)}$ branches.}
\label{Fig1}
\end{figure}

\begin{proposition}\label{Prop5.3}
Every finite strict Case~\(\mathrm{(A)}\) codimension-three point 
below the asymptotic threshold \eqref{Eqn54} lies on one of 
three algebraic branches. Hence, exact exclusion of these branches 
below \(n_A^*\) excludes disconnected finite components there.
\end{proposition}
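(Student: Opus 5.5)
The proof will follow directly from the algebraic reduction of Subsection~\ref{Sec-5.3.1}. A finite strict Case~(A) codimension-three point satisfies $V_1=V_2=0$ and $\Omega_1>0$. By \eqref{Eqn43}, strict admissibility forces $W>0$, so $W\neq0$ and \eqref{Eqn38} applies; this eliminates $D$ exactly through $D=N_D/W$. Substituting into $V_2$ and clearing the denominator, which has fixed sign on the strict region, gives the quadratic \eqref{Eqn45} in $g$ with coefficients $b_j(a,m,n)$. Since $g>0$ is a real root, exactly one of the four cases in \eqref{Eqn46} must occur, determined by the sign pattern of $(b_2,b_1,b_0)$ and by $\Delta_g$.

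The three branches I would name are:
\begin{enumerate}
\item[(I)] the ordinary root branch $g=g_\pm(a,m,n)$ of cases (i)--(ii), with $b_2\neq0$ and $\Delta_g\ge0$;
\item[(II)] the linear branch $b_2=0$, $b_1\neq0$, $g=-b_0/b_1$ of case (iii);
\item[(III)] the coefficient-degenerate branch $b_2=b_1=b_0=0$ of case (iv).
\end{enumerate}
Every such point lies on one of these semialgebraic sets, each parametrized by $(a,\xi,\eta)$ through \eqref{Eqn29}. The inequalities $g>0$, $W>0$, and $P_1,P_2,P_3>0$ are then imposed as polynomial sign conditions, which are independent of the $g\to\infty$ asymptotics.

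The second sentence is the structural consequence of this classification. Any finite component of $\mathcal C_3\cap\Omega_A$ with $n<n_A^*$, including a component disconnected from the asymptotic family \eqref{Eqn50}, consists of finite points and therefore lies in (I), (II) or (III). So a proof that none of the three semialgebraic sets meets $\{n<n_A^*\}\cap\{W,P_1,P_2,P_3,g>0\}$ excludes all such components at once. For (III), I would intend to run the gcd/elimination argument already used at $n=\tfrac12$ in Subsection~\ref{Sec-5.3.4} for general $n$, showing that the common zeros reduce to factors such as $a^{13}(a+1)^5$ or factors that vanish nowhere on $a,\xi,\eta>0$.

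I expect the main obstacle to be making sure the classification is exhaustive, in particular that the denominator removed from $V_2$ vanishes nowhere in $\Omega_1$. I would first try to factor that denominator into powers of $W$, $a$, $g+1$ and $a-m+1+n=\eta$, which are all positive on the strict region. If an extra factor appears, I would add its zero set as a further exceptional case and show it is not admissible.
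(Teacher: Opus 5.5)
Your proposal is correct and is essentially the paper's proof: the paper partitions the zero set of the quadratic $F_n=b_2g^2+b_1g+b_0$ into $\mathcal B_2=\{F_n=0,\ b_2\ne0\}$, $\mathcal B_1=\{b_2=0,\ b_1\ne0,\ g=-b_0/b_1\}$ and $\mathcal B_0=\{b_2=b_1=b_0=0\}$, which are your (I)--(III), and it derives the second claim because every finite point, including one on a component disconnected from the $g\to\infty$ family, lies in one of them. Two small remarks: argue exhaustiveness from this trichotomy on $(b_2,b_1)$ rather than from the sign patterns (i)--(ii) of \eqref{Eqn46}, which miss some positive roots with $b_2\ne0$ (for instance $b_0=0$, $b_1b_2<0$); and your plan for (III) is not needed here but belongs to Lemma~\ref{Lem5.4}, where for general $n$ the elimination leaves a genuine real solution (with $a$ a root of $P_c$) that is excluded by a Sturm sign argument showing $\xi<0$, not by sign-definite factors.
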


\begin{proof}
Using directly the original exponent variable $n$, write
\[
n=\xi+\eta,\qquad \eta=n-\xi,\qquad 0<\xi<n.
\]
The reduced second-focus equation is the quadratic
\[
F_n(a,\xi,n,g)
=b_2(a,\xi,n)g^2+b_1(a,\xi,n)g+b_0(a,\xi,n)=0, 
\]
which has already been classified in \eqref{Eqn46}. Accordingly,
\begin{equation}\label{Eqn55}
\mathcal V(F_n)=\mathcal B_2\cup\mathcal B_1\cup\mathcal B_0,
\end{equation}
where
\begin{align}
\mathcal B_2&=\{F_n=0,\ b_2\ne0\},\label{Eqn56}\\
\mathcal B_1&=\{b_2=0,\ b_1\ne0,\ g=-b_0/b_1\},\notag\\
\mathcal B_0&=\{b_2=b_1=b_0=0\}.\label{Eqn57}
\end{align}
Thus, every finite point belongs to one of these three branches.  The
decomposition \eqref{Eqn55}-\eqref{Eqn57} is the algebraic
counterpart of the explicit root classification \eqref{Eqn46}: on
$\mathcal B_2$ one has $b_2\ne0$ and the quadratic roots are selected by
conditions \(\mathrm{(i)}\)--\(\mathrm{(ii)}\) in \eqref{Eqn46};
$\mathcal B_1$ corresponds to condition \(\mathrm{(iii)}\), and
$\mathcal B_0$ to condition \(\mathrm{(iv)}\). Hence, exact
semialgebraic exclusion of $\mathcal B_2,\mathcal B_1,\mathcal B_0$ below
$n_A^*$ excludes any disconnected finite component in that range.
\end{proof}

\subsubsection{Case~\(\mathrm{(B)}\): double-boundary geometry 
and an exact interior certificate}\label{subsec:3LC-B-detail}\label{Sec-5.3.6} 

For Case~$\mathrm{(B)}$, the strict inequalities are
\begin{equation*}
U_1>0,\qquad U_2>0,\qquad L>0,
\end{equation*}
where the expressions of $U_1$, $U_2$ and $L$ are given 
in \eqref{Eqn48}. On the first-focus locus one further obtains
\begin{equation}\label{Eqn58}
aD>(a+1)(g+1).
\end{equation}
Combining \eqref{Eqn58} with $U_1>0$ yields
\begin{equation*}
\xi(g+1)>aD>(a+1)(g+1) \quad \Longrightarrow \quad \xi>a+1,
\end{equation*}
and therefore
\begin{equation*}
m=1+a+\xi>2(a+1), \qquad n=\xi+\eta>a+1.
\end{equation*}
Also, \eqref{Eqn58} and $U_2>0$ imply
\begin{equation}\label{Eqn59}
g(g+1)>a(1+D) >a+ (a+1)(g+1),
\end{equation}
which leads to 
\begin{equation}\label{Eqn60}
g^2-ag-2a-1>0 \quad \Longrightarrow \quad 
 g>g_+(a) := \frac{a+\sqrt{a^2+8a+4}}2>a+1.
\end{equation}
These inequalities force the Case~$\mathrm{(B)}$ codimension-three 
locus toward large exponents. The active double-boundary system 
may be written schematically as 
\begin{equation*}
 \widehat V_2=0,
 \qquad B_1=0,
 \qquad B_2=0,
\end{equation*}
where $B_1,B_2$ are the two active boundary polynomials obtained from 
\eqref{Eqn32}. Eliminating the auxiliary variables gives the relevant 
real solution, 
\begin{equation*}
 n_B^*=61.7688995629\cdots,
 \qquad
 m_B^*=87.8670322412\cdots.
\end{equation*}
A strict-interior certificate then verifies
\begin{equation*}
V_1=V_2=0,
\qquad V_3\ne0,
\qquad U_1,\,U_2,\,L>0.
\end{equation*}
The above result is illustrated in Figure~\ref{Fig2}.

\begin{figure}[!h]
\centering
\includegraphics[width=0.72\textwidth]{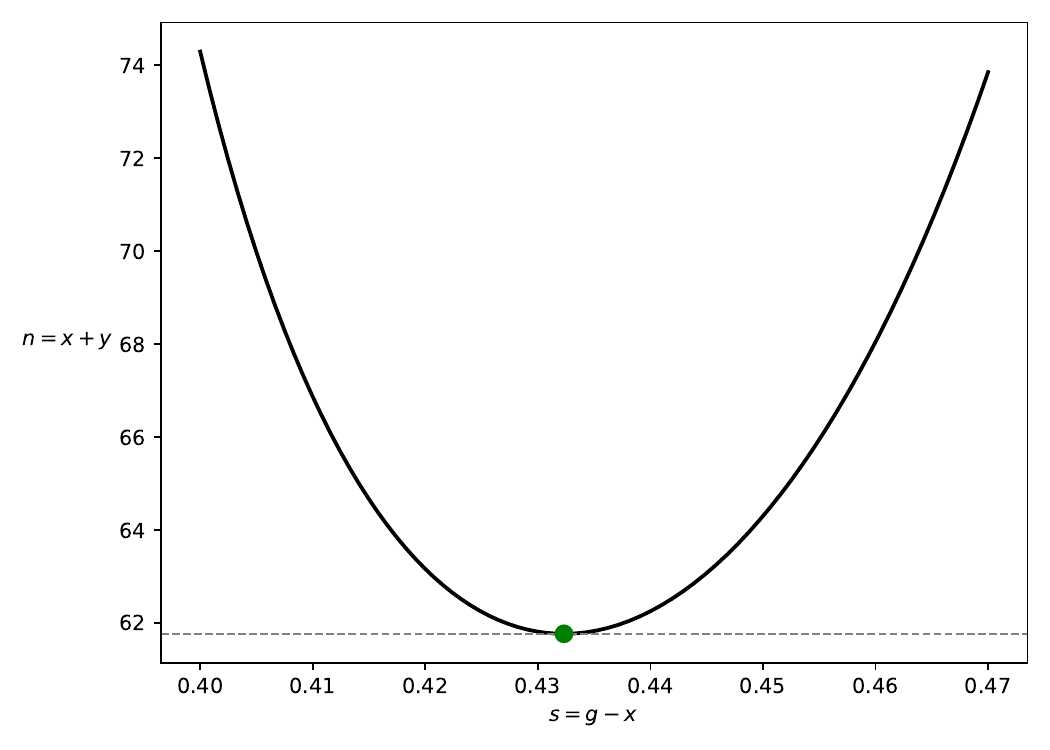}
\caption{Case~$\mathrm{(B)}$ codimension-three Hopf bifurcation with 
stationary candidate on the double-boundary 
algebraic branch. The solid curve plots the relevant real double-boundary 
component through the numerically identified stationary point, 
with ordinate $n=\xi+\eta$. The diamond marks the stationary candidate 
$(s_B^*,n_B^*)$, and the dashed line indicates the corresponding candidate 
boundary value of $n$. The curve displays the local minimum of $n$ on this 
double-boundary component. Since the double boundary lies outside the 
strict Case~$\mathrm{(B)}$ region and the remaining strict and 
single-boundary branches have not all been globally excluded, 
the marked value is a boundary-optimization candidate rather than 
a claimed global infimum for the finite strict Case~$\mathrm{(B)}$ problem.}
\label{Fig2}
\end{figure}

\subsubsection{Ordinary and exceptional branches in the global 
Case~\(\mathrm{(A)}\) reduction}\label{subsec:3LC-branches}\label{Sec-5.3.7} 

On $\mathcal B_2$, the real roots are given in \eqref{Eqn46}. 
Note that here only those real roots that satisfy 
$g_{\pm}>0$ are admissible. The condition $g_{\pm}>0$ is imposed directly 
on each real root and is not inferred from the signs of the coefficients $b_2$,
$b_1$ and $b_0$; in particular, neither $b_1<0$ nor positivity of both
roots is assumed.

Modulo $F_n$, every polynomial in $g$ reduces to degree at most one:
\begin{equation*}
g^2\equiv-\frac{b_1}{b_2}g-\frac{b_0}{b_2}
\pmod{F_n},
\end{equation*}
and hence each cleared admissibility numerator has the form
\begin{equation*}
M(a,\xi,n,g)
\equiv M_1(a,\xi,n)g+M_0(a,\xi,n)
\pmod{F_n}.
\end{equation*}
Thus, its zero set on the quadratic branch is projected by
\begin{equation*}
\mathcal R_M(a,\xi,n)
:=\operatorname{Res}_g(F_n,M_1g+M_0)=0.
\end{equation*}
When $M_1\ne0$,
\begin{equation*}
\mathcal R_M
=b_2M_0^2-b_1M_0M_1+b_0M_1^2,
\end{equation*}
up to a nonzero factor/sign convention for the resultant.  
This removes the square root $g=g_\pm$. 

On $\mathcal B_1$,
\begin{equation*}
b_2=0,
\qquad b_1\ne0,
\qquad g=-\frac{b_0}{b_1}>0,
\end{equation*}
and all margins reduce to rational functions of $(a,\xi,n)$.

On $\mathcal B_0$,
\begin{equation*}
 b_2=b_1=b_0=0,
 \qquad
 a>0,
 \qquad
 0<\xi<n,
\end{equation*}
so $F_n$ imposes no condition on $g$. Therefore, no division 
by $b_2$ or $b_1$ is permitted on this component. The coefficient ideal
\begin{equation*}
\mathcal I_0=\langle b_2,b_1,b_0\rangle
\end{equation*}
is decomposed and its real components are checked separately. 
The special value $n=\frac{1}{2}$ has already been excluded exactly in
Subsection~\ref{subsec:3LC-half-exact}.

\subsubsection{Polynomial admissibility after 
eliminating $D$}\label{subsec:3LC-poly-admissibility}\label{Sec-5.3.8}

Using the polynomials $N_D,W,P_1,P_2,P_3$ introduced in
Subsection~5.3.1, the strict Case~$\mathrm{(A)}$ admissibility conditions
are exactly the polynomial sign conditions in
\eqref{Eqn43}. Thus, no further rational inequalities need
to be introduced here.
Consequently, the strict codimension-three Case~$\mathrm{(A)}$ problem is
\begin{equation}\label{Eqn61}
F_n=0, \quad a,\, \xi,\, \eta,\, g>0,
\quad W>0, \quad P_1>0, \quad P_2>0, \quad P_3>0.
\end{equation}
Equivalently, with $n=\xi+\eta$,
\begin{equation*}
 F_n(a,\xi,n,g)=0,
 \quad a>0,
 \quad 0<\xi<n,
 \quad g>0,
 \quad W>0,
 \quad P_1>0,
 \quad P_2>0,
 \quad P_3>0.
\end{equation*}
The branch decomposition \eqref{Eqn56}-\eqref{Eqn57} together 
with \eqref{Eqn61} is the exact polynomial 
formulation used for the global Case~$\mathrm{(A)}$ analysis.

\subsubsection{Completion of the exceptional-branch exclusion}
\label{subsec:3LC-exceptional-exact}\label{Sec-5.3.9} 

It remains to exclude the rest of the coefficient-degenerate branch
\(b_2=b_1=b_0=0\). The special value \(n=\frac12\) was treated exactly in
Subsection~\ref{subsec:3LC-half-exact}. The following lemma covers the
remaining exceptional component in the range relevant to the
Case~$\mathrm{(A)}$ threshold.

\begin{lemma}\label{Lem5.4}
The exceptional branch $ b_2=b_1=b_0=0 $ has no solution in
$ a>0$, $0<\xi<n<\frac{4}{5}$.
\end{lemma}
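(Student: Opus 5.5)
We plan to treat the exceptional branch $\mathcal B_0=\{b_2=b_1=b_0=0\}$ by exact elimination in the three variables $(a,\xi,n)$, following the pattern already used for $n=\frac12$ in Subsection~\ref{subsec:3LC-half-exact}. We will work with $b_2,b_1,b_0\in\mathbb Q[a,\xi,n]$ as produced by clearing the denominator of $V_2$ after the substitution $D=N_D/W$. First we remove content and all factors that are manifestly nonzero on the region $a>0$, $0<\xi<n$, such as powers of $a$, $\xi$, $n$, $(a+1)$, $\eta=n-\xi$, and linear forms like $1+2a+2\xi$. Then we compute pairwise gcds of the resulting primitive parts. If a common factor survives, its zero set is a surface component of $\mathcal B_0$ and must be handled separately. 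We expect no such common factor except on special $n$-slices (as at $n=\frac12$), so generically $\mathcal B_0$ is a curve.

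Next we project to the exponent variable. We form the resultants $R_{21}=\operatorname{Res}_\xi(b_2,b_1)$ and $R_{20}=\operatorname{Res}_\xi(b_2,b_0)$, strip the obviously nonvanishing factors again, and then take $\mathcal R(n)=\operatorname{Res}_a(R_{21},R_{20})$, or equivalently a Gröbner basis of $\langle b_2,b_1,b_0\rangle$ in lex order with $n$ last. This yields a univariate polynomial $\mathcal R(n)$ whose zeros contain the $n$-projection of every point of $\mathcal B_0$.

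We then isolate the real roots of $\mathcal R$ in $(0,\frac45)$ exactly, using Sturm sequences or Descartes' rule with rational isolating intervals. For each isolated root $n_0$ there are two possibilities:
\begin{itemize}
\item If $n_0$ is rational (such as $n_0=\frac12$), we substitute it exactly and repeat the gcd/resultant analysis in $(a,\xi)$, as was done for $n=\frac12$.
\item Otherwise we specialize to the isolating box and show, by interval evaluation or by the sign of a further resultant in $a$ alone, that $b_2=b_1=0$ has no solution with $a>0$ and $0<\xi<n_0$.
\end{itemize}
In either case the aim is to reduce the system to a univariate equation of the form $a^k(a+1)^l\cdot(\text{positive})=0$. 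Extraneous roots introduced by the projection are harmless, since each one is checked directly. If $\mathcal R$ has no roots in $(0,\frac45)$ at all, the lemma follows immediately.

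We expect the main obstacle to be expression swell in the double resultant. The polynomials $b_i$ arise from $V_2$, which is large, so $\operatorname{Res}_a(R_{21},R_{20})$ may be enormous or identically zero when $R_{21}$ and $R_{20}$ share factors. To handle this we would first factor each $R_{2j}$ over $\mathbb Q$ and take resultants factor by factor. If the result still vanishes identically, meaning a common curve factor, we would replace the pair by $\gcd(R_{21},R_{20})$ and test $b_0$ on that component. A fallback is a triangular decomposition of $\mathcal I_0$ with the region's strict inequalities passed as nonvanishing constraints.
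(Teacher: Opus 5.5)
Your elimination framework matches the paper's in spirit. The paper sets $\eta=n-\xi$, splits off the factor $2n-1$ handled in Subsection~\ref{subsec:3LC-half-exact}, and computes a lex Gr\"obner basis of $\langle b_2,b_1,b_0\rangle$. However, the step that actually decides the lemma is misdesigned.

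First, away from $n=\frac12$ the set $\mathcal B_0$ is finite, not ``generically a curve'': three equations in three unknowns, and the paper's basis confirms finiteness. Your univariate $\mathcal R(n)$ presupposes this anyway, since a curve not lying in a slice $n=\mathrm{const}$ would force $\mathcal R\equiv0$.

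More importantly, $\mathcal B_0$ contains a genuine real point with $a>0$ and $0<n<\frac45$. The paper's basis is $P_c(a)=334a^3+134a^2+6a-1=0$, $9583n^2=8863-90168a-367296a^2$, and $518\xi=259n-T(a)$ with $T(a)=183-1336a^2-470a$. At the unique positive root $a\approx0.063$ of $P_c$ one gets $n\approx0.42$. So your $\mathcal R(n)$ will have a root near $0.42$, and the hoped-for reduction to $a^k(a+1)^l\cdot(\text{positive})=0$ will not occur. That point is excluded only because its $\xi$-coordinate is negative.

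For such an irrational root, your lifting criterion is to show, on an isolating box, that $b_2=b_1=0$ has no solution with $a>0$, $0<\xi<n_0$. This discards $b_0$ and is not what has to be proved. The set $\{b_2=b_1=0\}$ is a curve, and over an $n$-interval (or even at $n=n_0$ itself) it may well meet the admissible region at points that are not on $\mathcal B_0$, so the criterion need not hold. Moreover, plain interval evaluation over the unbounded set $a>0$ cannot certify emptiness without an a priori bound on $a$.

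What is needed is to lift each root to the actual points of $\mathcal B_0$ using all three equations (a triangular or shape-form basis), and then certify a violated sign there by exact arithmetic. The paper does exactly this with $a$ as the last variable:
\begin{itemize}
\item $a>0$ selects the single positive root of $P_c$;
\item $\xi$ appears linearly;
\item the Sturm-certified signs $T(a)>0$ and $S_{\rm num}(a)<0$ give $259n<T(a)$, hence $\xi<0$. Here $S_{\rm num}$ is the remainder of $259^2n^2-T(a)^2$ modulo $P_c$ after substituting the quadratic relation.
\end{itemize}
Ordering with $a$ last is what makes this a three-line argument, and it never uses $n<\frac45$. With $n$ last, you would have to recover $a$ and $\xi$ as functions of the algebraic number $n_0$ and carry out the same kind of exact sign test there.
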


\begin{proof}
The subbranch \(n=\frac12\) has no admissible solution by
Subsection~\ref{subsec:3LC-half-exact}. For the remaining exceptional
component, we substitute \(\eta=n-\xi\) into
\(b_2=b_1=b_0=0\), remove the factor \(2n-1\) considered above, and
compute a lexicographic Gröbner basis for the resulting polynomial ideal.
Eliminating successively \(\xi\) and \(n\) yields the following three
relations:
\[
P_c(a)=334a^3+134a^2+6a-1=0,
\]
together with
\begin{equation}\label{Eqn62}
367296a^2+9583n^2+90168a-8863=0, 
\end{equation}
and
\begin{equation}\label{Eqn63}
-1336a^2-470a-259n+518\xi+183=0.
\end{equation}
The cubic \(P_c\) has exactly one positive real root.  Put
\[
T(a)=183-1336a^2-470a.
\]
Then \eqref{Eqn63} gives
\begin{equation}\label{Eqn64}
518\xi=259n-T(a).
\end{equation}
Using \eqref{Eqn62}, reduce the numerator of
\(259^2n^2-T(a)^2\) modulo \(P_c\). Up to a nonzero rational 
factor the remainder is
\[
 S_{\rm num}(a)=
 -446224a^4-313960a^3-575749a^2-114789a+7138.
\]
Exact Sturm calculations on a rational isolating interval for the 
positive root of \(P_c\) yield
\[
T(a)>0,\qquad S_{\rm num}(a)<0.
\]
Consequently, it gives 
\[
259^2n^2<T(a)^2.
\]
Since \(n>0\) and \(T(a)>0\), we have 
\[
 259n<T(a),
\]
and \eqref{Eqn64} implies \(\xi<0\), contradicting admissibility.  
Hence, the exceptional branch contains no point with
\(a>0\) and \(0<\xi<n<\frac{4}{5}\). The rational cutoff \(\frac{4}{5}\) 
is chosen because the Case~\(\mathrm{(A)}\) asymptotic threshold satisfies
\(n_A^*=0.7959683536\cdots< \frac{4}{5}\). Hence, this exclusion covers 
the entire range below the threshold relevant 
to Proposition~\ref{Prop5.3}.
\end{proof}

\subsection{Codimension-four Hopf bifurcation generating 
four small-amplitude limit cycles}\label{Sec-5.4}

The codimension-four problem requires
\[
 V_1=V_2=V_3=0,\qquad V_4\ne0,
\]
together with three independent unfolding directions.  We first locate the
relevant algebraic components of the reduced focus system, then rigorously
certify a strict-interior Case~$\mathrm{(A)}$ solution, verify the
nondegeneracy of the first three focus values, and then apply
Corollary~\ref{Coro5.1}.

\subsubsection{Algebraic reduction and localization of 
the codimension-four candidates}\label{Sec-5.4.1} 

After eliminating $D$ by \eqref{Eqn38}, the equations
$V_2=V_3=0$ define a substantially thinner algebraic set than in the
three limit cycles problem. With $ m=1+a+\xi,\, n=\xi+\eta$, 
we use the Case~$\mathrm{(A)}$ margin, $ A_3=g(g+1)-a(D+1)$ 
to localize the low-exponent component.  On the boundary $A_3=0$,
\[
D=\frac{g(g+1)}a-1,
\]
so the reduced focus system loses one variable.  A stationary point of
$n=\xi+\eta$ on this boundary occurs near
\[
a=0.4666024292\cdots,\ \ \xi=0.5070197806\cdots,\ \ 
\eta=0.3706180362\cdots,\ \ g=54.0525566300\cdots,
\]
with
\[
m=1.9736222100\cdots,\qquad n=0.8776378168\cdots.
\]
This boundary calculation is used only to locate the low-exponent component;
the boundary point itself is not admissible for the 
unfolding of four limit cycles.
We therefore move into the strict interior by fixing exactly
\begin{equation}\label{Eqn65}
g=\frac{13513139162293}{250000000000},\qquad
A_3=\frac{314}{25},
\end{equation}
and hence
\[
D=\frac{g(g+1)-A_3}{a}-1.
\]
The resulting three equations $V_1=V_2=V_3=0$ are then solved in
$(a,\xi,\eta)$ and certified below.

\subsubsection{Case~\(\mathrm{(B)}\) and structural exponent bounds}
\label{Sec-5.4.2} 

For Case~$\mathrm{(B)}$, the same reduced focus system is used. The strict
Case~$\mathrm{(B)}$ inequalities imply, on the first-focus locus,
\[
aD>(a+1)(g+1),
\]
and therefore
\[
 \xi>a+1,\qquad m=1+a+\xi>2a+2,\qquad n=\xi+\eta>a+1.
\]
Moreover, under the conditions \eqref{Eqn59} and \eqref{Eqn60}, 
we obtain that every Case~$\mathrm{(B)}$ codimension-four 
candidate satisfies the structural lower bounds
\begin{equation}\label{Eqn66}
{m>2a+2,\qquad n>a+1,\qquad g>a+1.}
\end{equation}
A multistart search of the reduced equations locates a large-exponent
Case~$\mathrm{(B)}$ branch near
\[
(a,\xi,\eta)=(27.8179055000\cdots,\,61.2225000000\cdots,\,
 1.6415100000\cdots),
\]
corresponding to
\[
 m=90.0400000000\cdots,\qquad n=62.8640000000\cdots.
\]
This branch is recorded to describe the second Hopf bifurcation region; 
the rigorous four-cycle existence result below is obtained from the 
low-exponent strict-interior Case~$\mathrm{(A)}$ point.

\subsubsection{Rigorous certification of the Case~\(\mathrm{(A)}\) weak focus}
\label{subsec:4LC-certification}\label{Sec-5.4.3} 

\begin{proposition}\label{Prop5.5}
The generalized SIRS system \eqref{Eqn13} possesses a strict-interior weak
focus $C_4$ of order four in Hopf Case~$\mathrm{(A)}$.  More precisely,
\[
 v_1(C_4)=v_2(C_4)=v_3(C_4)=0,\qquad v_4(C_4)>0.
\]
\end{proposition}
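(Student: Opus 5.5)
The plan mirrors the proof of Proposition~\ref{Prop5.2}, with a three-dimensional Krawczyk certificate in place of the two-dimensional one. We fix $g$ and the margin $A_3$ exactly as in \eqref{Eqn65}, so that $D=\frac{g(g+1)-A_3}{a}-1$ becomes a rational function of $a$. We then regard $F(a,\xi,\eta)=(\widehat V_1,\widehat V_2,\widehat V_3)$ as a map $\mathbb{R}^3\to\mathbb{R}^3$. Here $\widehat V_k$ is the numerator $V_k$ of \eqref{Eqn35}, evaluated with $m=1+a+\xi$ and $n=\xi+\eta$, and normalized by a fixed positive factor. Since the prefactors in \eqref{Eqn35} are nonzero on the strict Hopf region, $F=0$ is equivalent to $v_1=v_2=v_3=0$.

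First, starting from the boundary stationary point of Subsection~\ref{Sec-5.4.1}, we run Newton's method in high precision (several hundred digits) on $F=0$ in the interior slice $A_3=\frac{314}{25}$. This produces a center $x_0=(a_0,\xi_0,\eta_0)$ with $a_0$ near $0.47$ and $g$ near $54$. We take a rational box $X$ of small radius, say $10^{-30}$, around $x_0$, and compute $Y\approx F'(x_0)^{-1}$ in floating point, rounded to rationals. We then evaluate $F(x_0)$ in exact rational arithmetic and enclose $[F'(X)]$ with outward-rounded interval arithmetic. This enclosure needs the terms $(1+x_1)^m$ and $(1+x_1)^{n\pm1}$ only through their Taylor coefficients, which are polynomial in $(m,n)$, so every $V_k$ is a genuine polynomial in $(a,m,n,g,D)$ and interval evaluation poses no difficulty. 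The inclusion $K(x_0,X)\subset X^\circ$ then yields a unique exact zero $C_4\in X$, with $D$ recovered from the identity above.

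Second, on the same box we verify strict Case~(A) admissibility. We use the five margins $T_1,\dots,T_5$ of \eqref{Eqn47}, or equivalently $A_1,A_2>0$ together with $A_3=\frac{314}{25}>0$, and also check $a,\xi,\eta,g,D>0$. Each of these is a rational expression evaluated by interval arithmetic, and we need enclosures bounded away from zero. In particular, $T_1=a-(m-1-n)=\eta>0$ and $T_4=\xi>0$ are immediate once the box lies in the positive octant. Third, we evaluate $v_4$, including its prefactor, by interval arithmetic over $X$ and obtain an enclosure contained in $(0,\infty)$. This gives $v_4(C_4)>0$. For use with Corollary~\ref{Coro5.1}, we would also record an enclosure of $\det[\partial(v_1,v_2,v_3)/\partial(D,g,a)]$ that excludes zero, although the proposition itself requires only the weak-focus statement.

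The main obstacle is the Krawczyk step itself. $V_3$ and $V_4$ are very large polynomials with high degrees in $g\approx 54$ and $D$, so naive interval evaluation suffers severe cancellation and overestimation, and the enclosure of $I-Y[F'(X)]$ may fail to be contractive. To handle this, we would first use a centered (mean-value) form, keeping the box radius tiny so that the width of $[F'(X)]$ stays far below $\|Y\|^{-1}$. Second, we would evaluate $F(x_0)$ exactly in rationals rather than in intervals. Third, we would rescale the variables, for instance $g$ and $D$ relative to their central values, to improve the conditioning. If contraction still fails, we would shrink the radius and raise the Newton precision until the inclusion holds.
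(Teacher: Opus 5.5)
Your proposal is correct and follows essentially the same route as the paper. The paper fixes $g$ and $A_3=\frac{314}{25}$ as in \eqref{Eqn65}, certifies a unique common zero of $V_1=V_2=V_3=0$ in $(a,\xi,\eta)$ by a three-dimensional Krawczyk inclusion on a rational box of radius $10^{-30}$, and then encloses the margins $T_1,\dots,T_5$ and $v_4$ on that box to get strict admissibility and $v_4>0$; the unfolding determinant you mention is treated separately in Lemma~\ref{Lem5.6}, with respect to $(a,g,D)$, exactly as you anticipate.
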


\begin{proof}
Fix the exact values in \eqref{Eqn65}. We use the Krawczyk criterion
introduced before Proposition~\ref{Prop5.2}, now applied to the
three equations $V_1=V_2=V_3=0$ in $(a,\xi,\eta)$. For the rational box
\[
 X=C_4+[-10^{-30},10^{-30}]^3,
\]
the interval computation gives
\[
 K(X)\subset X^\circ,
\]
where $X^\circ$ denotes the interior of $X$.  Hence, $X$ contains a unique
common zero.  In convenient rational interval form,
\begin{equation}\label{Eqn67}
\begin{aligned}
a_{C_4} \in\left[\frac{322954}{691989},\frac{196109}{420200}\right],\quad
\xi_{C_4} \in\left[\frac{374637}{738350},\frac{240647}{474277}\right],\quad
\eta_{C_4} \in\left[\frac{354723}{958022},\frac{112502}{303841}\right].
\end{aligned}
\end{equation}
The reconstructed parameters satisfy
\[
\begin{aligned}
D \in\left[\frac{2701142395}{425501},\frac{5079481575}{800152}\right],\quad
m \in\left[\frac{1330882}{674171},\frac{721771}{365620}\right],\quad
n \in\left[\frac{555088}{632461},\frac{461221}{525510}\right].
\end{aligned}
\]
On the same certified box, the five Case~$\mathrm{(A)}$ margins satisfy
\begin{equation}\label{Eqn68}
\begin{array}{lll}
T_1\in\left[\dfrac{354723}{958022},\dfrac{112502}{303841}\right],&
T_2\in\left[\dfrac{374815}{817786},\dfrac{178691}{389875}\right],&
T_3\in\left[\dfrac{1913}{967061},\dfrac{1235}{624318}\right],\\[2.5ex]
T_4\in\left[\dfrac{374637}{738350},\dfrac{240647}{474277}\right],&
T_5\in\left[\dfrac{477}{241126},\dfrac{1813}{916481}\right].
\end{array}
\end{equation}
All lower endpoints are positive, so $C_4$ lies strictly inside
Case~$\mathrm{(A)}$.  Finally, the first nonzero normalized focus value is
rigorously enclosed by
\begin{equation}\label{Eqn69}
v_4(C_4)\in
\left[\frac{194}{943439},\frac{137}{666243}\right]
\subset \big(0,\,2.1 \! \times \! 10^{-4} \big).
\end{equation}
Therefore $v_1=v_2=v_3=0$ and $v_4>0$ at the unique certified point $C_4$.
\end{proof}

\subsubsection{Nondegeneracy and admissible local unfolding}\label{Sec-5.4.4}

\begin{lemma}\label{Lem5.6}
At $C_4$, with $m$ and $n$ fixed, the map
$ (a,g,D)\longmapsto(v_1,v_2,v_3) $
has nonsingular derivative.  Consequently, $(v_1,v_2,v_3)$ form local
coordinates for an admissible three-parameter unfolding of the order-four
weak focus.
\end{lemma}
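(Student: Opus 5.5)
The plan is to certify the $3\times3$ Jacobian $J=\partial(v_1,v_2,v_3)/\partial(a,g,D)$ at $C_4$ by outward-rounded interval arithmetic on the same certified box $X$ from Proposition~\ref{Prop5.5}. We then invoke the inverse function theorem and Corollary~\ref{Coro5.1}.

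First we have to pass between the coordinates used in the certification, $(a,\xi,\eta)$ with $g,A_3$ fixed, and the coordinates $(a,g,D)$ of the lemma with $m,n$ fixed. Since $v_k$ are given as explicit functions of $(a,m,n,g,D)$ by \eqref{Eqn35}, we differentiate them directly in $(a,g,D)$, holding $m=1+a+\xi$ and $n=\xi+\eta$ fixed as independent variables. Thus $a$ varies with $m$ held constant, which implicitly changes $\xi$; we make no attempt to reuse the Krawczyk Jacobian. Because $v_1=v_2=v_3=0$ at $C_4$ and each prefactor in \eqref{Eqn35} is nonzero there, the product rule gives, on the zero set,
\[
\frac{\partial v_k}{\partial z}=c_k\,\frac{\partial V_k}{\partial z},\qquad z\in\{a,g,D\},
\]
where $c_k$ is the prefactor. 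Hence $\det J=c_1c_2c_3\det[\partial(V_1,V_2,V_3)/\partial(a,g,D)]$, and it suffices to enclose the polynomial Jacobian. Using the linearity $V_1=N_D-WD$ from \eqref{Eqn36}, the first row is explicit: $\partial V_1/\partial D=-W$, which is nonzero because $W>0$ on the admissible set by \eqref{Eqn38}. This simplifies the expansion of the determinant.

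Second, we evaluate the nine entries $\partial V_k/\partial z$ symbolically, using the same Maple expressions for $V_1,V_2,V_3$ as before. We then enclose them over the reconstructed parameter intervals for $a,m,n,g,D$ given in \eqref{Eqn67} and after it. To keep the enclosure meaningful, we evaluate with the mean-value form centered at the rational midpoint. The box radius is $10^{-30}$, so overestimation is negligible, provided the arithmetic runs at high enough precision to survive cancellation among the large polynomial coefficients. We then enclose $\det$ and verify that the enclosing interval excludes $0$, recording it as a rational interval in the same format as Proposition~\ref{Prop5.2}.

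Third, nonsingularity together with $v_1=v_2=v_3=0$ at $C_4$ gives, by the inverse function theorem, that $(v_1,v_2,v_3)$ are local coordinates in $(a,g,D)$. By Proposition~\ref{Prop5.5}, $C_4$ is a strict interior point because all $T_k$ have positive lower bounds, so every perturbation in a small enough neighborhood remains admissible. The main obstacle is numerical. $D\approx 6.3\times10^3$ and $g\approx54$ are large, and $v_4$ is only of order $10^{-4}$, so the determinant may be small after normalization. If the first enclosure contains $0$, we would first try rescaling the columns (for example using $\log D$ and $\log g$) and factoring out the common powers of $a$ and $D$ in \eqref{Eqn35} before the interval evaluation, and only then increase precision.
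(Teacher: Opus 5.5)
Your proposal is correct and follows essentially the same route as the paper. Both arguments use $V_1=V_2=V_3=0$ at $C_4$ to factor the Jacobian as $\operatorname{diag}(c_1,c_2,c_3)\,\partial(V_1,V_2,V_3)/\partial(a,g,D)$, rigorously enclose the determinant on the certified box (the paper obtains $J_4(C_4)\in[\tfrac25\times10^{-9},\tfrac12\times10^{-9}]$, which confirms it is small but nonzero), and combine the inverse-function theorem with the strictly positive margins $T_1,\dots,T_5$ from \eqref{Eqn68} to keep the unfolding admissible. Your extra use of $\partial_D V_1=-W$ is a harmless refinement the paper does not need; note only that $W>0$ follows from $W=N_D/D$ on the locus $V_1=0$, hence at $C_4$, not on the whole admissible set.
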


\begin{proof}
Write $v_j=c_jV_j$, $j=1,2,3$, where the normalization factors $c_j$ are
nonzero on the certified box.  At $C_4$, since $V_1=V_2=V_3=0$, 
it follows directly that
\[
 \frac{\partial(v_1,v_2,v_3)}{\partial(a,g,D)}(C_4)
 =\operatorname{diag}\bigl(c_1(C_4),c_2(C_4),c_3(C_4)\bigr)
 \frac{\partial(V_1,V_2,V_3)}{\partial(a,g,D)}(C_4).
\]
The normalized determinant satisfies the rigorous interval enclosure
\begin{equation}\label{Eqn70}
J_4(C_4) =
\det\left[\frac{\partial(v_1,v_2,v_3)}{\partial(a,g,D)}\right]_{C_4}
\in\left[\frac{2}{5}\times10^{-9},\frac{1}{2}\times10^{-9}\right]
\subset(0,10^{-9}).
\end{equation}
Thus, $J_4(C_4)\ne0$.  The inverse-function theorem implies that
$(v_1,v_2,v_3)$ may be prescribed independently and sufficiently small by
small variations of $(a,g,D)$ with $m,n$ fixed.

It remains to verify that these variations can be made without leaving the
Hopf bifurcation region. Let
\[
 \rho_4 =\min_{1\le j\le5}\inf T_j(C_4)>0,
\]
where positivity follows from \eqref{Eqn68}. By continuity, there is
a neighborhood $U$ of $C_4$ such that
\[
 T_j(z)>\frac{\rho_4}{2}>0,\qquad j=1,\ldots,5,
\]
for every $z\in U$.  Shrinking the inverse-function neighborhood if
necessary, the entire local unfolding is therefore contained in strict
Case~$\mathrm{(A)}$.
\end{proof}

By Proposition~\ref{Prop5.5} and
Lemma~\ref{Lem5.6}, all hypotheses of
Corollary~\ref{Coro5.1} with $k=4$ are satisfied. Hence, $C_4$ is a
nondegenerate codimension-four generalized Hopf point and four distinct
small-amplitude limit cycles bifurcate from ${\rm E}_1$ under sufficiently
small admissible perturbations.

\subsection{Codimension-five Hopf bifurcation generating five
small-amplitude limit cycles}\label{Sec-5.5}

This subsection contains the main bifurcation result of the paper.  The
equation $V_1=0$ is first solved exactly for $D$, and resultant elimination
is used to locate a codimension-five component of the first four focus
equations.  A nearby exact rational slice is then chosen strictly inside
Hopf Case~$\mathrm{(A)}$, and a Krawczyk inclusion converts the numerical
candidate into a rigorous isolated zero.  Finally, interval evaluation
certifies strict admissibility, $v_5\ne0$, and the full unfolding rank.
Corollary~\ref{Coro5.1} then gives the five-cycle conclusion. Thus,
the numerical search is used only for localization; the existence theorem
itself rests on exact algebraic reduction and validated interval computation.

\subsubsection{Algebraic reduction and localization of the codimension-five
component}\label{Sec-5.5.1}

With $ m=1+a+\xi$ and $ n=\xi+\eta$, the first focus equation is linear 
in $D$ after the nonzero factors have been removed.  
Substitution of $D=\frac{N_D}{W}$ into the remaining focus equations gives
rational functions of $(a,\xi,\eta,g)$.  Let $\widehat v_j$ denote the
numerator obtained from
\[
v_j\big|_{D=\frac{N_D}{W}},\qquad j=2,3,4,5,
\]
after clearing the denominators that are nonzero in the admissible region.
The first two projected equations are
\[
F_{23}=\operatorname{Res}_g(\widehat v_2,\widehat v_3),\qquad
F_{24}=\operatorname{Res}_g(\widehat v_2,\widehat v_4).
\]
After removing factors belonging to degenerate or center-type branches, the
reduced resultants $F_{23a}$ and $F_{24a}$ satisfy
\begin{equation}\label{Eqn71}
F_{23a}(a,\xi,\eta)=F_{24a}(a,\xi,\eta)=0.
\end{equation}
Equation \eqref{Eqn71} is used only to locate candidate components.
Since resultant equations may contain projection artifacts, every candidate
is used to recover a common value of $g$ and checked in the original focus
equations.  This computation locates a strict Case~$\mathrm{(A)}$ component
near
\[
a\approx0.0272866424,\quad
\xi\approx0.0000223054,\quad
\eta\approx3.2848526393.
\]
A further univariate elimination produces prohibitively large intermediate
expressions and is not needed for the certification below.

\subsubsection{Exact rational slice and Krawczyk certification}
\label{Sec-5.5.2}

For a rigorous certificate we choose the exact rational value
\begin{equation}\label{Eqn72}
a=\frac{321}{11764}.
\end{equation}
This value is a rational point close to the numerically located component but
strictly inside the admissible region.  
Therefore, it is reasonable to use exact outward rounding interval arithmetic, 
which remains a positive distance from the admissibility boundary 
for the subsequent unfolding.

At \eqref{Eqn72}, solve \eqref{Eqn38} 
exactly for $D=D(\xi,\eta,g)$ and define the matrix (square) system
\[
F(\xi,\eta,g)=
\bigl(\widetilde v_2,\widetilde v_3,\widetilde v_4\bigr),\qquad
\widetilde v_j(\xi,\eta,g)
=v_j(D(\xi,\eta,g),\xi,\eta,g).
\]
A three-dimensional Krawczyk computation is carried out on a rational box
$\mathcal B_*$ of radius $10^{-38}$ about the numerical candidate. If $x_0$
is the center of $\mathcal B_*$, $[JF(\mathcal B_*)]$ is an interval enclosure
of the Jacobian, and $Y\approx J(F(x_0))^{-1}$ is a point preconditioner, then
\[
K(x_0,\mathcal B_*)
=x_0-YF(x_0)+\bigl(I-Y[JF(\mathcal B_*)]\bigr)(\mathcal B_*-x_0).
\]
The validated computation gives
\begin{equation}\label{Eqn73}
K(x_0,\mathcal B_*)\subset\mathcal B_*^{\circ},
\end{equation}
where $\mathcal B_*^{\circ}$ denotes the interior of $\mathcal B_*$.  Hence
$F=0$ has a unique zero $(\xi_*,\eta_*,g_*)\in\mathcal B_*$. By the exact
reconstruction \eqref{Eqn38}, 
the corresponding $D_*$ also satisfies $v_1=0$.

For convenient independent verification, the certified zero and reconstructed
parameters are contained in the following simpler rational outer enclosures:
\begin{equation}\label{Eqn74}
\begin{array}{lll}
\displaystyle
\xi_*\in\left[\frac{1652}{74062435},\frac{1538}{68951589}\right],&
\displaystyle
\eta_*\in\left[\frac{3164903}{963484},\frac{3098457}{943256}\right],&
\displaystyle
g_*\in\left[\frac{300135}{429064},\frac{580385}{829701}\right],\\[2.5ex]
\displaystyle
D_*\in\left[\frac{19732763}{467401},\frac{41246917}{976997}\right],&
\displaystyle
m_*\in\left[\frac{468307}{455858},\frac{112177}{109195}\right],&
\displaystyle
n_*\in\left[\frac{146489}{44595},\frac{2905738}{884581}\right].
\end{array}
\end{equation}

\subsubsection{Strict admissibility, fifth focus value, and unfolding rank}
\label{Sec-5.5.3}

For the Case~$\mathrm{(A)}$ margins $T_1,\ldots,T_5$ defined in
\eqref{Eqn47}, interval evaluation on the certified box gives
\begin{equation}\label{Eqn75}
\begin{array}{lll}
\displaystyle
T_1\in\left[\frac{3164903}{963484},\frac{3098457}{943256}\right],&
\displaystyle
T_2\in\left[\frac{13629}{519599},\frac{14657}{558791}\right],&
\displaystyle
T_3\in\left[\frac{217}{999373},\frac{116}{534227}\right],\\[2.5ex]
\displaystyle
T_4\in\left[\frac{1}{44832},\frac{22}{986303}\right],&
\displaystyle
T_5\in\left[\frac{49}{221719},\frac{122}{552035}\right].
\end{array}
\end{equation}
Every lower endpoint in \eqref{Eqn75} is positive. Thus, the
certified zero lies strictly inside Hopf Case~$\mathrm{(A)}$.

At the same zero,
\begin{equation}\label{Eqn76}
v_1=v_2=v_3=v_4=0,
\quad
v_5\in
\left[-\frac{1}{1896000000},-\frac{1}{1898000000}\right]
\subset \left(-0.5 \! \times \! 10^{-9},\, 0 \right).
\end{equation}
Consequently, the positive equilibrium is a weak focus of order five.

It remains to verify that the first four focus values provide four independent
unfolding directions. Since $v_1=0$ has been solved regularly for $D$, the
implicit differentiation of
$v_1(D(\xi,\eta,g),\xi,\eta,g)=0$ gives the Schur-complement factorization
\begin{equation}\label{Eqn77}
\det\left[\frac{\partial(v_1,v_2,v_3,v_4)}
{\partial(D,\xi,\eta,g)}\right]
= \frac{\partial v_1}{\partial D}
\det\left[\frac{\partial(\widetilde v_2,\widetilde v_3,\widetilde v_4)}
{\partial(\xi,\eta,g)}\right].
\end{equation}
Here, $\partial_Dv_1$ is separated from zero on the certified box, and the
reduced determinant satisfies the rigorous interval enclosure
\begin{equation}\label{Eqn78}
\det\left[\frac{\partial(\widetilde v_2,\widetilde v_3,\widetilde v_4)}
{\partial(\xi,\eta,g)}\right] 
\in \left[-\frac{34023}{19},-\frac{39395}{22}\right] \subset(-1791,\,0).
\end{equation}
Therefore, \eqref{Eqn77} is nonzero and
\begin{equation}\label{Eqn79}
\operatorname{rank}
\left[\frac{\partial(v_1,v_2,v_3,v_4)}
{\partial(D,\xi,\eta,g)}\right] =4.
\end{equation}
By the inverse-function theorem, $(v_1,v_2,v_3,v_4)$ may be used as four
independent local unfolding parameters near the certified point, with $a$
fixed by \eqref{Eqn72}. Because all inequalities in
\eqref{Eqn75} are strict, sufficiently small unfolding
perturbations remain inside the admissible Case~$\mathrm{(A)}$ region.

\subsubsection{Main result of codimension-five Hopf bifurcation}
\label{Sec-5.5.4}

\begin{proposition}\label{Prop5.7}
For the exact value $a=\frac{321}{11764}$, there exists a strict-interior point
$C_5$ in Hopf Case~$\mathrm{(A)}$ such that
\[
v_1(C_5)=v_2(C_5)=v_3(C_5)=v_4(C_5)=0,
\qquad v_5(C_5)\ne0,
\]
and
\[
\det\left[\frac{\partial(v_1,v_2,v_3,v_4)}
{\partial(D,\xi,\eta,g)}(C_5)\right] \ne0.
\]
Consequently ${\rm E}_1$ is a nondegenerate weak focus of order five and,
under sufficiently small admissible perturbations, five distinct
small-amplitude limit cycles bifurcate from ${\rm E}_1$.
\end{proposition}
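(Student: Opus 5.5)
The plan is to assemble Proposition~\ref{Prop5.7} from the certified ingredients of Subsections~5.5.2--5.5.3 and then invoke Corollary~\ref{Coro5.1} with $k=5$. Fix $a=\frac{321}{11764}$ as in \eqref{Eqn72} and let $C_5=(a,\xi_*,\eta_*,g_*,D_*)$, where $(\xi_*,\eta_*,g_*)$ is the unique zero in $\mathcal B_*$ of the reduced system $F=(\widetilde v_2,\widetilde v_3,\widetilde v_4)$. Its existence and uniqueness follow from the Krawczyk inclusion \eqref{Eqn73}, and $D_*=N_D/W$ is given by \eqref{Eqn38}.

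First, I will check that $W>0$ on $\mathcal B_*$, which follows from $D_*>0$ in \eqref{Eqn74} together with $N_D>0$. Then $D_*$ is well defined, and $V_1=0$ holds identically by the linearity \eqref{Eqn36}, so $v_1(C_5)=0$. Since the prefactors in \eqref{Eqn35} are nonzero in the strict Hopf region, $\widetilde v_j=0$ gives $v_j(C_5)=0$ for $j=2,3,4$. Strict admissibility comes from \eqref{Eqn75}: all $T_k$ have positive lower endpoints, so $C_5$ is an interior point of Case~$\mathrm{(A)}$. The enclosure \eqref{Eqn76} gives $v_5(C_5)<0$, so the order is exactly five.

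Next comes the unfolding rank. Implicit differentiation of $v_1(D(\xi,\eta,g),\xi,\eta,g)\equiv0$ gives the Schur-complement identity \eqref{Eqn77}. Here $\partial_Dv_1=c_1\partial_DV_1=-c_1W$ is nonzero because $W>0$ and $c_1\ne0$. The reduced determinant is strictly negative by \eqref{Eqn78}, so the $4\times4$ determinant with respect to $(D,\xi,\eta,g)$ is nonzero. I then take $\mu_2,\dots,\mu_5=(D,\xi,\eta,g)$, equivalently perturbations of $(D,m,n,g)$ with $a$ fixed. For $\mu_1$ I take $B$ (equivalently $a$) off its Hopf value. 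Transversality $\partial v_0/\partial\mu_1\ne0$ follows from \eqref{Eqn33} and \eqref{Eqn21}: ${\rm Tr}_1$ vanishes linearly in $B-B_{\rm H}$ with nonzero coefficient $-r(q-q_c)/(1+B)$, and $q>q_c$ holds in the Hopf region.

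With all these in hand, Corollary~\ref{Coro5.1} applies. The full Jacobian factors as the transversality factor times the reduced determinant, and by continuity the strict margins persist on a neighborhood. This yields five small-amplitude limit cycles under small admissible perturbations.

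The main obstacle is the rigor of the interval steps rather than the logic. Everything rests on the Krawczyk inclusion for the very large polynomials $\widetilde v_2,\widetilde v_3,\widetilde v_4$, and on sign-definite enclosures of $v_5$ and of the Jacobian determinant on the same box. Particular care is needed because $T_3$, $T_4$ and $\xi_*$ are tiny, of order $10^{-4}$ to $10^{-5}$. The box radius $10^{-38}$ and exact rational outward rounding must be fine enough that no enclosure straddles zero. I would first verify that the reported rational enclosures are computed from the same box. I would also confirm that the denominators cleared in forming $\widehat v_j$ are nonzero there, so that no spurious resultant branch has been certified.
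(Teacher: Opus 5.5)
Your proposal is correct and follows the paper's proof essentially step for step: Krawczyk isolation via \eqref{Eqn73}, reconstruction of $D_*$ from \eqref{Eqn38}, the strict margins \eqref{Eqn75}, the sign of $v_5$ from \eqref{Eqn76}, the Schur-complement rank argument \eqref{Eqn77}--\eqref{Eqn78}, and Corollary~\ref{Coro5.1} with $k=5$. Your identity $\partial_D v_1=-c_1W\neq0$ at $V_1=0$ and your derivation of transversality from \eqref{Eqn21} make explicit what the paper asserts through interval separation and \eqref{Eqn33}. The one thing to tidy is the parenthetical ``equivalently $a$'': $a$ is fixed in the construction, and varying $a$ with $B=B_{\rm H}(a,m,n)$ stays on the Hopf surface. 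So state $\mu_1=B-B_{\rm H}$ with $(p,q,\widetilde X,s,r)$ held fixed; after the time rescaling by $1+B$, its transversality coefficient is exactly \eqref{Eqn33}.
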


\begin{proof}
Fix $a=\frac{321}{11764}$.  By the Krawczyk inclusion
\eqref{Eqn73}, the reduced system
$\widetilde v_2=\widetilde v_3=\widetilde v_4=0$ has a unique zero
$(\xi_*,\eta_*,g_*)$ in the certified box $\mathcal B_*$.  The exact
reconstruction \eqref{Eqn38} then determines $D_*$ and gives
$v_1(C_5)=0$; hence
\[
 v_1(C_5)=v_2(C_5)=v_3(C_5)=v_4(C_5)=0.
\]
The interval enclosures \eqref{Eqn75} have strictly positive
lower endpoints, so $C_5$ lies in the interior of Hopf Case~$\mathrm{(A)}$.
Moreover, \eqref{Eqn76} separates $v_5(C_5)$ from zero, proving
that $C_5$ is a weak focus of order five.

It remains only to verify the nondegenerate unfolding required by
Corollary~\ref{Coro5.1}. The Schur-complement identity
\eqref{Eqn77}, together with the separation of
$\partial_Dv_1$ from zero and the interval estimate
\eqref{Eqn78}, yields the rank condition
\eqref{Eqn79}. Thus $(v_1,v_2,v_3,v_4)$ provide four independent
local unfolding directions.  The Hopf transversality factor is nonzero by
\eqref{Eqn33}, and strict admissibility persists under sufficiently small
perturbations. Corollary~\ref{Coro5.1} with $k=5$ therefore yields
five distinct small-amplitude limit cycles bifurcating from ${\rm E}_1$.
\end{proof}

\subsubsection{Parameters for independent verification and dimensional 
realization}\label{Sec-5.5.5}

For reproducibility, we first give the corresponding parameter values for the
scaled system~\eqref{Eqn7}.  We use the scaled system because it contains six
parameters, whereas the original system~\eqref{Eqn4} contains eight.  Once a
parameter set for the scaled system is fixed, two parameters in the original
system remain free.  The equivalence of systems~\eqref{Eqn4} and
\eqref{Eqn7} under the transformations~\eqref{Eqn6} and~\eqref{Eqn8}
therefore allows the same certified five-limit-cycle configuration to be
represented by a two-parameter family of dimensional parameter sets.

The six parameters $(p,q,s,\widetilde X,B_{\rm H},r)$ of the scaled system are
\[
\begin{array}{rrr}
p=1.0273089427\ldots, & q=3.2848749860\ldots, &
s=61.7832417793\ldots,\\[1ex]
\widetilde X=0.0065866579\ldots, & B_{\rm H}=0.0000067904\ldots,  
& r=62.2836378231\ldots .
\end{array}
\]
These are the parameters associated with the certified order-five weak focus
obtained above and are not altered in the dimensional reconstruction below.
The focus-value computation was carried out with the Maple procedures of
\cite{yu1998,tianyu2013} at up to $1000$ decimal digits.  For independent
verification, the corresponding high-precision parameter values are recorded
in Appendix~A.

We next transfer this parameter point back to the original dimensional
system~\eqref{Eqn4}.  Introduce the two free positive scaling parameters
\[
\tau=d+\delta>0,
\qquad
\rho=\frac{\Lambda}{d}>0.
\]
Inverting the parameter relations in~\eqref{Eqn8} gives
\begin{equation}\label{Eqn80}
\mu=s\tau,\qquad
d=(r-s)\tau,\qquad
\delta=(1-r+s)\tau.
\end{equation}
Moreover, using the equilibrium relation,
\[
K=\frac{r(1+B_{\rm H})\widetilde X}
{1-(s+1)\widetilde X},
\qquad
k_1=\frac{K}{\widetilde X^p},
\qquad
b=\frac{B_{\rm H}}{\widetilde X^q},
\]
and hence
\begin{equation}\label{Eqn81}
\Lambda=\rho d,\qquad
k=\frac{k_1\tau}{\rho^p},\qquad
\alpha=\frac{b}{\rho^q}.
\end{equation}
Thus, changing $\tau$ and $\rho$ changes only the dimensional realization and
does not alter the scaled parameter point, the focus values, or the
five-limit-cycle bifurcation established above.

To obtain a dimensional realization whose epidemiological time scales are
consistent with the representative ranges summarized in
Table~\ref{table1}, we choose the simple values
\begin{equation}\label{Eqn82}
\tau=0.00275,\qquad \rho=1.
\end{equation}
With this choice, \eqref{Eqn80} gives
$$
\begin{aligned}
\mu =0.1699039149 \cdots,\qquad 
d =0.0013760891\cdots,\qquad 
\delta =0.0013739109\cdots.
\end{aligned}
$$
In particular,
$$
\frac{1}{\mu}=5.8856795655 \cdots\ {\rm days},
\qquad
\frac{1}{\delta}=727.8492475800 \cdots\ {\rm days}
\approx 1.99 \ {\rm years}.
$$
Thus, the infectious period lies within the representative range of
$3$--$14$ days, while the immunity duration is approximately two years and
lies within the representative range of $6$--$24$ months reported in the
literature.  Equivalently, the reconstructed values
\[
\mu=0.1699039149\cdots\ {\rm day}^{-1},
\qquad
\delta=0.0013739109\cdots\ {\rm day}^{-1}
\]
lie within the corresponding ranges summarized in
Table~\ref{table1}.

Since $\rho=1$, we also have
\[
\Lambda=d=0.0013760891\cdots .
\]
Using~\eqref{Eqn81}, the remaining dimensional
parameters are
\[
k=0.3349924727\cdots, \qquad \alpha=0.1463566352\cdots .
\]
Consequently, one dimensional realization of the certified five-limit-cycle
configuration is
\begin{equation}\label{Eqn83}
\begin{array}{llll}
p=1.0273089427\ldots, &
q=3.2848749860\ldots, &
\delta=0.0013739109\ldots, &
\hspace*{-0.30in} k=0.3349924727\ldots,\\[1ex]
\mu=0.1699039149\ldots, &
\alpha=0.1463566352\ldots, &
\Lambda=d=0.0013760891\ldots .
\end{array}
\end{equation}

We emphasize that the choice~\eqref{Eqn83} is not a parameter
fit to a particular disease or data set.  Rather, it is one convenient
dimensional realization, among the two-parameter family generated by
$\tau$ and $\rho$, for which the recovery and loss-of-immunity time scales
are consistent with representative epidemiological ranges.  

\begin{corollary}\label{Coro5.8}
For the dimensional parameter values in~\eqref{Eqn83},
system~\eqref{Eqn4} has a Hopf critical point at which an admissible local
unfolding generates five small-amplitude limit cycles near the positive
equilibrium.  Consequently, the Hopf bifurcation of the original system has
codimension at least five.
\end{corollary}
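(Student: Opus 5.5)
The plan is to transfer Proposition~\ref{Prop5.7} from the scaled system~\eqref{Eqn13} back to the dimensional system~\eqref{Eqn4}. The transfer uses only the explicit, invertible changes of variables and parameters in \eqref{Eqn6}, \eqref{Eqn8}, \eqref{Eqn12}, \eqref{Eqn80} and \eqref{Eqn81}. The guiding observation is that every step maps the phase space and time by a diffeomorphism that preserves orientation and the positive quadrant. Limit cycles, their number and their small amplitude near the equilibrium are therefore invariants of the transformation. In the same way, a small admissible unfolding of the scaled parameters lifts to a small unfolding of the dimensional parameters.

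First, I check that \eqref{Eqn83} is exactly the image of the certified point $C_5$ under \eqref{Eqn80}--\eqref{Eqn81} with $\tau=0.00275$ and $\rho=1$. All quantities involved are positive. In particular, $d=(r-s)\tau>0$ and $\delta=(1-r+s)\tau>0$ follow from $s<r<s+1$, which is part of strict Case~(A) through \eqref{Eqn30}--\eqref{Eqn32}. I then verify that \eqref{Eqn8} is recovered, so that the $(k_1,b,s,r)$ obtained from \eqref{Eqn83} coincide with those of $C_5$. The scaling \eqref{Eqn6} with $t'=(d+\delta)t$ carries \eqref{Eqn4} to \eqref{Eqn7}, and \eqref{Eqn12} with $\widetilde X$ from $C_5$ carries \eqref{Eqn7} to \eqref{Eqn13}. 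Under this composite map the positive equilibrium of \eqref{Eqn4} goes to $E_1=(1,1)$ and the Hopf critical value $B=B_{\rm H}$ corresponds to $\alpha=b/\rho^q$. The dimensional equilibrium is therefore a weak focus of order five: the focus values of \eqref{Eqn4} are positive multiples of those of \eqref{Eqn13}, because the linear scalings and the positive time rescaling only multiply each $v_j$ by a positive factor once the lower ones vanish.

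Second, I transfer the unfolding. The parameter map $(\mu,d,\delta,\Lambda,k,\alpha,p,q)\mapsto(p,q,s,r,\widetilde X,B)$ is smooth, and on the slice with $\tau,\rho$ fixed it is a local diffeomorphism onto its image. Its inverse is given explicitly by \eqref{Eqn80}--\eqref{Eqn81}, with $\widetilde X$ recovered implicitly from \eqref{Eqn10}. This implicit recovery is regular because the equilibrium is hyperbolic in the $x$-direction: $\det_1>0$ at the Hopf point. Any perturbation of $(D,\xi,\eta,g)$ and of the Hopf parameter $B$ inside the strict Case~(A) neighborhood from Proposition~\ref{Prop5.7} is then realized by a small perturbation of the dimensional parameters. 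The resulting five limit cycles of \eqref{Eqn13} pull back to five small-amplitude limit cycles of \eqref{Eqn4} near the positive equilibrium, which lies in $\Omega$.

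The main obstacle is bookkeeping rather than analysis. The rational reparametrization \eqref{Eqn30} must be shown to be locally invertible at $C_5$, so that the unfolding in $(D,\xi,\eta,g)$ with $a$ fixed is genuinely induced by dimensional parameters. Among the auxiliary variables, $\widetilde X$ is a free coordinate in \eqref{Eqn16} but not in \eqref{Eqn4}, and I must handle it through the implicit function theorem applied to \eqref{Eqn10}. I would first attempt this by computing the Jacobian of \eqref{Eqn30} symbolically. I would then use the strict positivity of $a$, $g$, $D$ and of the margins $T_j$ from \eqref{Eqn75} to conclude that it is nonsingular. With that in hand, the corollary follows from Corollary~\ref{Coro5.1} applied with $k=5$.
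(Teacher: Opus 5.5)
Your proposal is correct and follows the same route as the paper. You pull the certified point $C_5$ back through the positive state and time scalings \eqref{Eqn6}, \eqref{Eqn12} and the explicit parameter inversion \eqref{Eqn80}--\eqref{Eqn81}, with $s<r<s+1$ (i.e.\ $T_3,T_5>0$) giving $d,\delta>0$. Your extra parameter-space bookkeeping rigorously justifies the unfolding correspondence that the paper only asserts: the simple root of \eqref{Eqn10} from $\det_1=\omega_c^2>0$ (this, not hyperbolicity, which fails at a Hopf point), and invertibility of \eqref{Eqn30}, whose inverse is explicitly $g=r\widetilde X/\widetilde X_1$, $a=(g+1)/r$, $D=sg-1$.
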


\begin{proof}
The parameter values in~\eqref{Eqn83} are obtained from
the certified scaled parameter point by the inverse transformations
\eqref{Eqn6}--\eqref{Eqn8}, with $\tau=0.00275$ and $\rho=1$.
Since $s<r<s+1$, all reconstructed demographic rates are positive.
The transformations consist of regular positive scalings of the state
variables and time.  They therefore preserve the local phase-space
correspondence near the positive equilibrium. Hence, the Hopf critical point
and its admissible unfolding generating five small-amplitude limit cycles in
the scaled system correspond directly to those of the original dimensional
system~\eqref{Eqn4}.
\end{proof}

\subsubsection{Biological interpretation}\label{Sec-5.5.6}

The certified order-five weak focus identifies a highly degenerate local
oscillatory regime near the endemic equilibrium.  An ordinary Hopf
bifurcation creates a single nearby periodic branch, whereas the higher-order
unfolding proved above permits five nested small-amplitude periodic solutions
to coexist for nearby admissible parameter values.  In the SIRS variables,
these periodic solutions represent distinct recurrent oscillatory regimes of
the infected and recovered populations around the same endemic state. Thus,
the nonlinear incidence mechanism can support a considerably richer local
oscillation structure than is visible from linear stability or an ordinary
Hopf analysis alone.

The conclusion is local and bifurcation-theoretic.  The five limit cycles are
five nearby periodic orbits in parameter-dependent phase space; they should
not be interpreted as five successive epidemic waves in a single observed
time series.  Rather, their coexistence demonstrates the sensitivity of the
endemic oscillatory dynamics to small admissible changes of epidemiological
parameters near the certified codimension-five point.

\subsection{Analytical and numerical evidence against a codimension-six
Hopf critical point}\label{Sec-5.6}

The codimension-five result of Subsection~\ref{Sec-5.5} is rigorous.
A codimension-six generalized Hopf point would additionally require
\begin{equation}\label{Eqn84}
V_1=V_2=V_3=V_4=V_5=0,\qquad V_6\ne0.
\end{equation}
We do not claim a global nonexistence theorem for \eqref{Eqn84}.
Instead, we record the exact algebraic restrictions and the numerical
searches that motivate the conjecture at the end of this subsection.

\subsubsection{Algebraic necessary conditions}\label{Sec-5.6.1}

After eliminating $D$ by \eqref{Eqn38}, let
$\widehat V_j(a,\xi,\eta,g)$, $j=2,3,4,5$, denote the reduced focus
numerators after factors known to be nonzero in the strict admissible region
have been removed, and define
\begin{equation}\label{Eqn85}
F_{23a}=\operatorname{Res}_{g}(\widehat V_2,\widehat V_3),\quad 
F_{24a}=\operatorname{Res}_{g}(\widehat V_2,\widehat V_4),\quad 
F_{25a}=\operatorname{Res}_{g}(\widehat V_2,\widehat V_5).
\end{equation}
Hence, every strict Case~$\mathrm{(A)}$ solution of \eqref{Eqn84}
must project to a positive solution of
\[
F_{23a}=F_{24a}=F_{25a}=0,
\]
for which the reconstructed parameters satisfy the strict
Case~$\mathrm{(A)}$ admissibility condition $\Omega_1$ defined in
\eqref{Eqn40}. 
This implication is only necessary, since resultant equations may contain
projection artifacts.

There is also an exact restriction on the regular projected branch.  Put
\begin{equation}\label{Eqn86}
R_{24}=\operatorname{pp}_{\eta}
\!\left(\operatorname{prem}_{\eta}(F_{24a},F_{23a})\right),\quad
R_{25}=\operatorname{pp}_{\eta}
\!\left(\operatorname{prem}_{\eta}(F_{25a},F_{23a})\right),
\end{equation}
where $\operatorname{pp}_{\eta}$ denotes the primitive part with respect to
$\eta$.  Exact computation gives
\begin{equation}\label{Eqn87}
\gcd(F_{23a},R_{24})=
\gcd(F_{23a},R_{25})=
\gcd(R_{24},R_{25})=1.
\end{equation}
Thus, the regular projected system has no common hypersurface component.
Indeed, pseudo-division reduces every regular common zero of
$F_{23a},F_{24a},F_{25a}$ to a common zero of
$F_{23a}=R_{24}=R_{25}=0$, and \eqref{Eqn87} excludes a common
nonconstant factor.  This does not exclude isolated or lower-dimensional
real intersections, and content and leading-coefficient branches must be
treated separately.

\subsubsection{Numerical and admissibility evidence}\label{Sec-5.6.2}

We first continued the codimension-five component through the certified point
$C_5$. Writing
\[
H_1(a,\xi,\eta)=F_{23a}(a,\xi,\eta),\quad
H_2(a,\xi,\eta)=F_{24a}(a,\xi,\eta),
\]
the implicit-function theorem locally gives $\xi=\xi(a)$ and
$\eta=\eta(a)$ whenever
$\det\big[\frac{\partial(H_1,H_2)}{\partial(\xi,\eta)}\big]$ $\ne0$. 
Along this branch, we reconstruct $g(a)$ and $D(a)$ and evaluate
\[
 \Phi(a)=F_{25a}(a,\xi(a),\eta(a)).
\]
Predictor--corrector continuation was carried out from $C_5$ in both
directions.  On the computed portion for which
$A_1,A_2,A_3>0$, no zero of $\Phi$ was detected before the branch left the
strict-admissible region.  This excludes no other component and is used only
as numerical evidence.

For a broader projected search, reconstruct $g$ and $D$ at positive roots of
$F_{23a}=F_{24a}=0$ and substitute these rational expressions into the three
Case~$\mathrm{(A)}$ margins $A_1,A_2,A_3$.  On each branch where the relevant
denominator signs are fixed, clear the known positive denominators in the
pairwise necessary admissibility inequalities.  We denote the resulting
polynomial numerators by
$\mathcal N_{12},\mathcal N_{13},\mathcal N_{23}$.  Their expanded forms are
not needed here; only their signs are used.  By construction, strict
Case~$\mathrm{(A)}$ admissibility implies
\begin{equation}\label{Eqn88}
\mathcal N_{12}>0,\quad
\mathcal N_{13}>0,\quad
\mathcal N_{23}>0.
\end{equation}
Consequently, failure of any inequality in \eqref{Eqn88}
excludes that projected root from the strict Case~$\mathrm{(A)}$
codimension-six set.

The global component scan used $30$ logarithmically spaced slices over
$10^{-5}\le a\le10^2$. It produced $171$ distinct positive projected roots
of $F_{23a}=F_{24a}=0$, and none satisfied all three necessary inequalities
in \eqref{Eqn88}. Thus, all $171$ sampled roots were excluded
from strict Case~$\mathrm{(A)}$. The scan is finite and non-exhaustive, and
is therefore recorded only as computational evidence.

\subsubsection{Codimension-six conjecture}\label{Sec-5.6.3}

Combining the exact and numerical information, every strict 
Case~$\mathrm{(A)}$ codimension-six point must satisfy
\[
F_{23a}=F_{24a}=F_{25a}=0,\quad \Omega_1>0,
\]
while the regular projected equations satisfy the coprimality relations
\eqref{Eqn87}. No codimension-six point was found either on the
continued component through $C_5$ or in the global projected scan described
above.  Neither computation constitutes an exhaustive global exclusion.  In
particular, no exact exclusion has been established on every regular,
content, and leading-coefficient branch, and the computation above concerns
strict Case~$\mathrm{(A)}$; Case~$\mathrm{(B)}$ remains open.

\begin{conjecture}\label{Conj5.9}
There is no strict admissible generalized-Hopf point satisfying
\[
 V_1=V_2=V_3=V_4=V_5=0.
\]
Consequently, five is the maximal number of small-amplitude limit cycles
generated from the positive equilibrium by a generalized Hopf bifurcation in
the present SIRS family.
\end{conjecture}

\section{Comparison with \(p=7,\,q=5\) for two small-amplitude limit cycles}
\label{Sec-6}

For comparison with the five-cycle result of the previous section, 
we consider the distinct parameter choice \(p=7\) and \(q=5\), 
for which the Hopf bifurcation can be analyzed more explicitly.

\begin{proposition}\label{Prop6.1}
For system \eqref{Eqn13}, when \(p=7\) and \(q=5\), 
two small-amplitude limit cycles can bifurcate from the Hopf critical point
\[
B=B_{\mathrm H}=\frac{6-a}{a-1}
\]
near the positive equilibrium \({\rm E}_1=(1,1)\).
\end{proposition}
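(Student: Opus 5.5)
We specialize the general focus-value formulation of Subsection~\ref{Sec-5.1} to $m=p=7$ and $n=q=5$, and then apply Lemma~\ref{Lem4.1} (equivalently Corollary~\ref{Coro5.1}) with $k=2$. With these exponents, $m-1-a=6-a$ and $a-m+1+n=a-1$. The Hopf value is therefore $B_{\rm H}=\frac{6-a}{a-1}$, which agrees with the statement, and the positivity requirement \eqref{Eqn31} becomes $1<a<6$. In the notation \eqref{Eqn29} this means $\xi=6-a>0$ and $\eta=a-1>0$, so the remaining free parameters are $(a,g,D)$.

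Since $p$ and $q$ are integers, system \eqref{Eqn34} is polynomial after multiplying by $1+Bx^q$. All focus values are then rational functions of $(a,g,D)$, and exact rational arithmetic can replace interval arithmetic. The first step uses \eqref{Eqn36}--\eqref{Eqn37}. We write out
\[
N_D=(g+1)\big[a(a+1)+(6-a)(a-1)\big]\big[a(a+1)g+(6-a)(a-1)(1+g)\big]
\]
and
\[
W=(6-a)(g+1)(a-1)^2-(6-a)(a-1)(7g+7-a)+a^2(a+1)g.
\]
On $W>0$ the unique solution of $v_1=0$ is $D=N_D/W>0$. Substituting this into the Case~$\mathrm{(A)}$ margins gives the polynomial conditions $W,P_1,P_2,P_3>0$ of \eqref{Eqn43}. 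We would first determine, by direct sign analysis in the $(a,g)$ plane for $1<a<6$, a nonempty open set on which all of them hold. If Case~(A) turns out to be empty for these exponents, we would use the Case~(B) margins $U_1,U_2,L$ of \eqref{Eqn48} instead; the bounds $m>2a+2$, $n>a+1$ of Subsection~\ref{Sec-5.3.6} still allow small $a$.

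The second step substitutes $D=N_D/W$ into $v_2$ to obtain the reduced function $\widetilde v_2(a,g)$, whose numerator is the quadratic $Q_2$ of \eqref{Eqn45} with $m=7$, $n=5$. We would then choose an explicit rational point $(a_0,g_0)$ in the admissible open set at which $\widetilde v_2\neq0$. Evaluating exactly at a rational point gives a nonzero rational value; alternatively, we show $Q_2$ is not identically zero on the region, so it is nonzero off a curve. At that point:
\begin{itemize}
\item $v_0=0$, because $B=B_{\rm H}$;
\item $v_1=0$, by the choice of $D$;
\item $v_2\neq0$.
\end{itemize}

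For the nondegeneracy condition \eqref{Eqn44}, take the unfolding parameters $\mu_1=B$ and $\mu_2=D$. Transversality holds because ${\rm Trans}<0$ by \eqref{Eqn33}. Moreover, $\partial v_1/\partial D=\frac{n}{16(a-1)a^2D}\,W\cdot(\text{nonzero})\neq0$, since $V_1=N_D-WD$ and $W>0$. Strict admissibility persists under small perturbations, so Corollary~\ref{Coro5.1} with $k=2$ yields two small-amplitude limit cycles near ${\rm E}_1$. The sign of $v_2$ would also give their stabilities, as in Proposition~\ref{Prop4.2}.

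The main obstacle is the first step: showing that the admissible set $\{W,P_1,P_2,P_3>0\}$ is nonempty for $m=7$, $n=5$. The margins $A_2$ and $A_3$ force $a$ into the narrow band between $g(g+1)/(1+D+g)$ and $g(g+1)/(1+D)$, and this band must be compatible with $D=N_D/W$. I would first try large $g$, where $A_3$ is small but positive as in Subsection~\ref{Sec-5.3.5}, and scan $a\in(1,6)$. If that fails, I would fall back on Case~(B) and repeat the same argument with its margins.
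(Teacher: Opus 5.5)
Your strategy is sound, but as written it stops exactly where the statement has content. With $B=B_{\rm H}$ and $D$ chosen to kill $v_1$, the other ingredients are routine: transversality from \eqref{Eqn33}, $\partial_D v_1\propto W\neq0$, and persistence of strict inequalities. What must actually be shown is that $\{v_1=0,\ D>0\}$ meets the strict Hopf region for $p=7$, $q=5$ at a point with $v_2\neq0$. You defer this, and you even leave open whether Case~$\mathrm{(A)}$ is empty, so no admissible point is ever produced. The paper settles it with one rational point: $a=5$, $g=10$ (so $B_{\rm H}=\tfrac14$), $D=\tfrac{32164}{1735}$. There the three nontrivial Case~$\mathrm{(A)}$ margins are $\tfrac{5945}{4659}$, $\tfrac{4295}{1553}$ and $\tfrac{21355}{33899}$.

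Be careful if you reproduce this from \eqref{Eqn38} and \eqref{Eqn43}. For these exponents $N_D=2(g+1)(4a-3)\big[(a-1)(6-a)+2(4a-3)g\big]$ and $W=-2F_1$, with $F_1=(a-1)(a-4)(a-6)-(8a^2-31a+24)g$. The paper's proof, however, solves $v_1=0$ as $D=-\frac{(g+1)(4a-3)[(a-1)(6-a)+2(4a-3)g]}{aF_1}=\frac{N_D}{aW}$, not $N_D/W$. Your sign arguments ($D>0\iff W>0$, $\partial_Dv_1\neq0$) survive this extra factor $a$, but the margin polynomials $P_1,P_2,P_3$ do not. With $D=N_D/W$, the same $(a,g)=(5,10)$ gives $D=\tfrac{32164}{347}$, which violates $\frac{g(g+1)}{1+D}>a$. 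So take $D$ from the specialized $v_1$ itself.

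Given such a point, your route and the paper's differ only in how $v_2\neq0$ is secured. You would evaluate $v_2$ at that one point, or argue that the reduced numerator is not identically zero; that is all the existence claim needs. The paper proves more. It writes $v_2$ through a quadratic $v_{2c}$ in $g$ and shows $F_1<0\Rightarrow v_{2c}\neq0$ on all of $1<a<6$, $g>0$:
\begin{itemize}
\item it excludes $\alpha\le a\le\beta$ (between the roots of $8a^2-31a+24$) because $D<0$ there;
\item on $1<a<\alpha$ it substitutes $g=G(a)+t$ and uses Sturm sequences to make every coefficient negative;
\item on $\beta<a<6$ it shows the discriminant of $v_{2c}$ in $g$ is negative.
\end{itemize}
This gives codimension at most (hence exactly) two for these exponents. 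The remark after the proof relies on this when it contrasts $p=7$, $q=5$ with the five-cycle construction. It also means any admissible point, including $a=5$, automatically has $v_2\neq0$. Conversely, your explicit nondegeneracy check with $(\mu_1,\mu_2)=(B,D)$ is something the paper leaves implicit; it is valid because $v_0\propto B-B_{\rm H}$ does not depend on $D$.
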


\begin{proof}

Using \eqref{Eqn30} and setting \(p=7\) and \(q=5\), we obtain
\[
c=6-a,\qquad h=a-1,
\]
which requires \(1<a<6\). Thus, \(a\), \(D\), and \(g\) remain as the 
three free parameters in the computation of the focus values, under 
the following restriction (see the conditions in \eqref{Eqn32}):
\begin{equation}\label{Eqn89} 
D>0, \ \ g>0, \ \  
\left\{
\begin{array}{ll} 
\textrm(A) \ \ \max\left\{1,\,\dfrac{g(g+1)}{1+D+g},\, 
\dfrac{6(g+1)}{1+D+g} \right\} 
<a < \min\left\{6,\,\dfrac{g(g+1)}{1+D} \right\}, 
\\[3.5ex] 
\textrm(B) \ \ \max\left\{1,\,\dfrac{g(g+1)}{1+D+g} \right\} 
<a < \min\left\{6,\,\dfrac{g(g+1)}{1+D},\, \dfrac{6(g+1)}{1+D+g} \right\}. 
\end{array}
\right.  
\end{equation} 

We first assume only $1<a<6$ and $g>0$ and prove that $v_2\ne0$ 
whenever $v_1=0$ and $D>0$. We then choose admissible parameter 
values satisfying $v_1=0$. 

In particular, from \eqref{Eqn36}, we obtain
\[
D=-\,\frac{(g+1)(4a-3)\big[(a-1)(6-a)+2(4a-3)g\big]}
{aF_1(a,g)},
\]
where
\[
F_1(a,g)=(a-1)(a-4)(a-6)-(8a^2-31a+24)g.
\]
Since the numerator in the expression for \(D\) is positive 
for \(1<a<6\) and \(g>0\), we have \(D>0\) if and only if \(F_1<0\).

The second focus value can be written as
\[
v_2=
\frac{-5\,v_{2c}}
{48a(a-1)(4a-3)\big[(a-1)(6-a)+2(4a-3)g\big]},
\]
where
$$
\begin{array}{rl}
v_{2c}= \!\!\! & (-672a^5+3420a^4-6414a^3+8223a^2-7836a+3276)g^2\\[1.0ex] 
&+(a-1)(a-6)(28a^4-85a^3+369a^2-1366a+1092)g\\[1.0ex]
&+(a-1)^2(a-6)^2(7a^3-45a^2-10a+91).\\[1.0ex] 
\end{array} 
$$
Therefore, for \(1<a<6\) and \(g>0\), it is sufficient to show 
that \(F_1<0\) and \(v_{2c}\ne0\).

Define
\[
Q(a)=8a^2-31a+24=8(a-\alpha)(a-\beta),
\]
where
\[
\alpha=\frac{31-\sqrt{193}}{16},
\qquad
\beta=\frac{31+\sqrt{193}}{16}.
\]

There are three possible intervals:
\[
1<a<\alpha,\qquad
\alpha\le a\le\beta,\qquad
\beta<a<6.
\]
However, the middle interval can be excluded immediately. 
Indeed, for \(\alpha\le a\le\beta\), we have \(Q(a)\le0\), while
\((a-1)(a-4)(a-6)>0\), since \(1<\alpha<\beta<4\). 
Hence, for \(g>0\), \(F_1(a,g)>0\). Thus \(F_1<0\), 
and consequently \(D>0\), is impossible in this interval. 
We therefore only need to consider the following two cases.
Note that the $\alpha$ used in this section should not be confused with 
the $\alpha$ used in model \eqref{Eqn3}. 

\medskip

\noindent
\textbf{Case 1:} \(1<a<\alpha\).

In this case, \((a-1)(a-4)(a-6)>0\) and \(Q(a)>0\). 
Hence, \(F_1(a,g)<0\) implies
\[
g>G(a)
=
\frac{(a-1)(a-4)(a-6)}
{8a^2-31a+24}.
\]
Write \(g=G(a)+t\), where \(t>0\). Substituting this expression into \(v_{2c}\) and simplifying gives
\[
v_{2c}=A(a)t^2+E(a)t+H(a),
\]
where
\[
\begin{array}{ll}
A(a)=-3\left(224a^5-1140a^4+2138a^3-2741a^2+2612a-1092\right),\\[1ex]
E(a)=-\dfrac{7a(a-6)(a-1)R(a)}
{8a^2-31a+24},\\[2ex]
H(a)=-\dfrac{49a^2(a-6)^2(a-1)^2(4a-3)(5a^2-20a+17)}
{(8a^2-31a+24)^2},
\end{array}
\]
with
\[
R(a)=160a^5-1524a^4+4847a^3-6193a^2+3074a-372.
\]

Since \(\alpha<\frac{27}{25}\), it suffices to consider \(1<a<\frac{27}{25}\). 
By Sturm's theorem, \(A\) has no zero on \([1,\frac{27}{25}]\). 
Since \(A(1)=-3<0\), it follows that \(A(a)<0\) throughout this interval. 
Similarly, Sturm's theorem shows that \(R\) has no zero 
on \([1,\frac{27}{25}]\), and since \(R(1)=-8<0\), we have \(R(a)<0\) there.

Consequently, \(E(a)<0\). Moreover, \(4a-3>0\) 
and \(5a^2-20a+17=5(a-2)^2-3>0\) for \(1<a<\frac{27}{25}\), 
so \(H(a)<0\). Therefore, since \(t>0\),
\[
v_{2c}=A(a)t^2+E(a)t+H(a)<0.
\]
Thus \(v_{2c}\ne0\) in Case 1.

\medskip

\noindent
\textbf{Case 2:} \(\beta<a<6\).

In this case, regard \(v_{2c}\) as a quadratic polynomial in \(g\):
\[
v_{2c}=A_2(a)g^2+B_2(a)g+C_2(a).
\]
Its discriminant with respect to \(g\) is
\[
\Delta_g(v_{2c})
=
49a^2(a-6)^2(a-1)^2P(a),
\]
where
\[
P(a)=400a^6-4520a^5+16249a^4-23318a^3+12809a^2-660a-920.
\]

Since \(\beta>\frac{14}{5}\), it suffices to prove that 
\(P(a)<0\) on \([\frac{14}{5},6]\). 
By Sturm's theorem, \(P\) has no zero on this interval. Since
\[
P\left(\frac{14}{5}\right)=-\frac{387596}{625}<0,
\]
we conclude that \(P(a)<0\) for \(\frac{14}{5}\le a\le6\). Hence
\[
\Delta_g(v_{2c})<0
\qquad (\beta<a<6).
\]
Therefore, \(v_{2c}\), viewed as a quadratic polynomial in \(g\), 
has no real zeros. In particular, \(v_{2c}\ne0\) 
for \(\beta<a<6\) and \(g>0\).

Combining the two cases, we conclude that
\[
F_1(a,g)<0
\quad\Longrightarrow\quad
v_{2c}(a,g)\ne0
\]
for \(1<a<6\) and \(g>0\). 

Having proved that for $p=7$ and $q=5$, the codimension of 
the Hopf bifurcation is at most two (Bautin bifurcation). 
Now, we want to verify if there exist solutions such that all the 
constraints on the parameters, in particular,
$a_{\min}<a<a_{\max}$, are satisfied. 
This is a simple verification, since we only need to 
find a solution satisfying all the conditions. For example, 
choosing $a=5$ and $g=10$ gives $D=\frac{32164}{1735}$, 
and obtain that the condition (A) is satisfied: 
$$ 
a-\dfrac{g(g+1)}{1+D+g}= \dfrac{5945}{4659}, \quad  
a-\dfrac{6(g+1)}{1+D+g}= \dfrac{4295}{1553}, \quad   
\dfrac{g (g+1)}{1+D}-a=\dfrac{21355}{33899},  
$$ 
all of them are positive, 
indicating that the required conditions are satisfied.  

This completes the proof.
\end{proof}
This example also highlights the role of allowing \(p\) and \(q\) to vary
over the real numbers. For the fixed integer pair \(p=7,\ q=5\), the
degeneracy obtained here yields only two small-amplitude limit cycles, 
whereas the free-exponent construction in Section~5 reaches a nondegenerate 
weak focus of order five. Thus, the additional freedom in the real 
exponents is not merely a technical convenience: it permits substantially 
higher Hopf degeneracy.

\section{A simple proof for the codimension-$2$ 
BT bifurcation of system \eqref{Eqn13}}\label{Sec-7}

In this section, as a showcase, we apply our method to prove that 
the codimension of the BT bifurcation of system \eqref{Eqn13} is two, 
providing a comparison with the approach presented in~\cite{cui2024saddle}. 

We do not provide a detailed bifurcation analysis for the BT bifurcation, 
since it has already been analyzed in detail in~\cite{cui2024saddle}. Instead, 
we use this example to demonstrate the advantages of our approach.

\begin{proposition}\label{Prop7.1}
For system \eqref{Eqn13}, the BT bifurcation occurs at the 
positive equilibrium ${\rm E_1}=(1,1)$, and its codimension is two.
\end{proposition}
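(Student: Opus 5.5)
The plan is to work at $E_1=(1,1)$ using the trace and determinant formulas \eqref{Eqn21}, and then to show that the quadratic coefficients of the BT normal form cannot both vanish.

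\textbf{Step 1: the BT critical point.} A BT point requires ${\rm Tr}_1=0$ and $\det_1=0$ simultaneously.
\begin{itemize}
\item $\det_1=0$ gives $Bq\tilde X_1+(B+1)(1-p\tilde X_1)=0$.
\item ${\rm Tr}_1=0$ gives $Bqr\tilde X_1+(1+B)r(p_c-p)=0$.
\end{itemize}
Eliminate $Bq\tilde X_1$ between the two relations. This yields $(1+B)\bigl[(1-p\tilde X_1)-(p_c-p)\bigr]=0$, that is, $1-p\tilde X_1=p_c-p$. Since $p_c=1+\frac1r+\frac{\tilde X}{\tilde X_1}$, this solves explicitly for $p$ in terms of $(\tilde X,s,r)$, and then $q$ (equivalently $B$) follows from $\det_1=0$. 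I would express this in the parametrization \eqref{Eqn30}, where the conditions become rational in $(a,m,n,g,D)$. In particular, $\omega_c^2=D\,n^2/(a-m+1+n)^2$ shows that the BT condition is just $D=0$ on the Hopf locus. So the BT locus is $D=0$ together with $K=ng/(a-m+1+n)$ and $B=B_{\rm H}$. The Jacobian then has a nonzero off-diagonal entry, so it is a genuine nilpotent Jordan block rather than the zero matrix.

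\textbf{Step 2: reduction to normal form.} Multiply the vector field by $(1+Bx^q)$, as in Section~5. Apply the affine change of variables that sends $E_1$ to the origin and puts the linear part in the form $\begin{pmatrix}0&1\\0&0\end{pmatrix}$; this is the analogue of \eqref{Eqn34} with $D=0$, where no exponential terms appear because $(1+x_1)^m$ and $(1+x_1)^{n+1}$ expand polynomially in the coefficients. Expand to second order and apply the standard near-identity transformation to reach
\[
\dot u=v,\qquad \dot v=a_{20}u^2+b_{11}uv+O(|(u,v)|^3).
\]
The coefficients $a_{20}$ and $b_{11}$ are rational functions of $(a,m,n,g)$. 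Codimension exactly two requires $a_{20}b_{11}\ne0$, together with transversality of the two-parameter unfolding $(\mu_1,\mu_2)=(\det,{\rm Tr})$.

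\textbf{Step 3: the key sign argument.} This is the main obstacle. I expect $a_{20}$ to factor into manifestly nonzero terms, something like $n\cdot(\text{positive})$, using \eqref{Eqn31}. I expect $b_{11}$ to be a low-degree polynomial in $(a,m,n,g)$. If $b_{11}$ can vanish, I would not try to show it never does. Instead I would show that $b_{11}=0$ forces a codimension-three condition that still leaves the system nondegenerate. Concretely, I would compute the next normal-form coefficient, of the form $u^2v$ or an $a_{30}$-type quantity, on $b_{11}=0$, and check that the degeneracy is incompatible with the admissibility inequalities \eqref{Eqn31} and $m>1$, $g>0$. My first attempt is to eliminate $g$ from $b_{11}=0$ and show that the resulting factor has a fixed sign under $m-1-n<a<m-1$; this is simple precisely because $D$ has disappeared.

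Finally, I would verify that $\partial(\det_1,{\rm Tr}_1)/\partial(B,D)$, or with respect to another convenient pair of parameters, is nonsingular at the BT point. This gives a versal two-parameter unfolding, so the BT bifurcation has codimension two.
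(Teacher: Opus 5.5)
Your overall route is the paper's: locate the BT point, reduce to $\dot u=v$, $\dot v=f_2(u,v)$, show both quadratic normal-form coefficients are nonzero, and check a two-parameter transversality. Your remark that the BT locus is exactly $D=0$ under \eqref{Eqn30} is correct.

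Step~1, however, is wrong as written. The second line of \eqref{Eqn21} is misprinted: its second term lacks a factor $\tilde X_1$. The form consistent with the first and third lines is ${\rm Tr}_1=-\frac{r}{1+B}[Bq+(1+B)(p_c-p)]$. Eliminating $Bq$ correctly between ${\rm Tr}_1=0$ and $\det_1=0$ gives $p_c=1/\tilde X_1$, i.e.\ $r=\tilde X_1/(s\tilde X)$, which does not involve $p$. Then $\det_1=0$ gives $p=1/\tilde X_1+Bq/(1+B)$. These are the paper's $r_{\rm bt}$ and $p_{\rm bt}$, and they are equivalent to your $D=0$ because $1+D=rs\tilde X/\tilde X_1$ under \eqref{Eqn30}. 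Your claim that the elimination ``solves explicitly for $p$'' therefore contradicts your own correct $D=0$ description.

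The genuine gap is Step~3. You never show $a_{20}b_{11}\ne0$; you only predict it, and your fallback is not a valid argument. If $b_{11}$ vanished at an admissible BT point, that point would have codimension at least three. Computing a cubic coefficient there could at best show codimension exactly three, which refutes the statement rather than proving it. What is needed is $b_{11}\ne0$ on the entire admissible BT set, and this is in fact immediate.

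Use $x=1+u+v$, $y=1+u$; the matrix in \eqref{Eqn34} is singular at $D=0$, so it cannot be used. Then $\dot u=v$, $f_2$ has zero linear part, and
\[
s\tilde X\,\partial_{uu}f_2(0)=-\frac{1-\tilde X_1}{\tilde X_1}-\frac{\tilde X_1Bq^2}{(1+B)^2},\qquad s\tilde X\,\partial_{uv}f_2(0)=-\frac{\tilde X}{\tilde X_1}-\frac{\tilde X_1Bq^2}{(1+B)^2}.
\]
These equal $-c_{20a}/((1+B)^2\tilde X_1)$ and $-c_{11a}/((1+B)^2\tilde X_1)$, i.e.\ the paper's quadratics in factored form. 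Both are manifestly negative for $B\ge0$, so no discriminant argument is needed. Multiplying the field by $1+Bx^q$ only rescales them by positive factors.

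In your variables at $D=0$ one has $\tilde X_1=1/(a+1)$. The two quantities become $-a-\frac{(m-1-a)(a-m+1+n)}{a+1}$ and $-\frac{ag}{g+1}-\frac{(m-1-a)(a-m+1+n)}{a+1}$, which are negative under \eqref{Eqn31}.

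Your transversality choice $(B,D)$ does work: $\partial_D\det_1=1$, $\partial_D{\rm Tr}_1=0$, and $\partial_B{\rm Tr}_1=-r(a-m+1+n)/(1+B)\ne0$. This is an acceptable substitute for the paper's $(p,r)$, whose determinant is $1$.

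Finally, to claim the BT point actually occurs you must also impose $s<r<s+1$. At $D=0$ this reads $1/g<(g+1)/a<1+1/g$, which is the paper's \eqref{Eqn93}, and it is missing from your admissibility list.
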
 

\begin{proof}
The BT bifurcation point is determined when the trace ${\rm Tr}$ 
and determinant $\det$ of the Jacobian matrix of system \eqref{Eqn13} 
vanish simultaneously.   
The restriction on the parameters is given by \eqref{Eqn16}: 
\begin{equation}\label{Eqn90} 
p>0,\quad q\ge 0,\quad \tilde{X}>0, \quad B \ge 0, \quad 
\textrm{and} \quad 0<s<r<s+1.  
\end{equation}   

Evaluating the determinant of the Jacobian of \eqref{Eqn13} 
at the equilibrium ${\rm E_1}=(1,1)$ gives 
$$ 
\det = \frac{r}{(1+B) \tilde{X}_1} \big[1+B+B q \tilde{X}_1 
-p (B+1)\tilde{X}_1 \big].
$$ 
Setting $\det = 0$ yields 
\begin{equation}\label{Eqn91} 
p_{\rm bt} = \dfrac{1+B+ B q\tilde{X}_1}{(1+B) \tilde{X}_1}. 
\end{equation} 
Under this condition, the trace of he Jacobian of \eqref{Eqn13} becomes
$$ 
{\rm Tr} = \dfrac{r s \tilde{X} - \tilde{X}_1}{\tilde{X}_1}.  
$$  
Letting ${\rm Tr} =0$ gives 
\begin{equation}\label{Eqn92} 
r_{\rm bt} = \dfrac{\tilde{X}_1}{s\,\tilde{X}}, 
\end{equation}
which requires 
\begin{equation}\label{Eqn93} 
0<r_{\rm bt}-s = \dfrac{ 1-(s^2+s+1) \tilde{X}}{s \tilde{X}}<1 
\ \ \Longrightarrow \ \ 
\dfrac{1}{(s+1)^2} < \tilde{X} < \dfrac{1}{s^2+s+1}. 
\end{equation}
The positive equilibrium ${\rm E_1}$ is asymptotically stable if 
$p<p_{\rm bt}$ and $r < r_{\rm b}$.  

Moreover, a direct calculation shows that 
\begin{equation}\label{Eqn94} 
\det\left[\begin{array}{cc} \dfrac{\partial {\rm Tr}}{\partial p} & 
\dfrac{\partial {\rm Tr}}{\partial r} \\[2.0ex]
\dfrac{\partial \det}{\partial p} &    
\dfrac{\partial \det}{\partial r} 
\end{array} \right]_{(p=p_{\rm bt},\, r=r_{\rm bt})}
= 1, 
\end{equation}
implying that the unfolding of $p$ and $r$ is non-degenerate.

Introducing the following affine transformation,
$$ 
\left(\begin{array}{c} x \\ y \end{array} \right) 
= \left(\begin{array}{c} 1 \\ 1 \end{array} \right) 
+ \left[\begin{array}{cc} 1 & 1 \\[1.0ex] 
                          1 & 0 
        \end{array} 
  \right] 
\left(\begin{array}{c} u \\ v \end{array} \right),
$$ 
into system \eqref{Eqn13}, together with the above expressions for 
$p$ and $r$, we obtain 
$$ 
\hspace*{-0.20in}
\begin{array}{rl} 
\dfrac{d u}{dt} = \!\!\!\! & v, \\[2.0ex] 
\dfrac{d v}{dt} = \!\!\!\! &
\dfrac{1}{s \tilde{X} [1+B (1+u+v)^q]} 
\Big\{ (1+B) \big[1- \tilde{X} \big( (1+s) (1+u)+v \big) \big]
        (1+u+v)^{\frac{1+ B + B q \tilde{X}_1}{(1+B) \tilde{X}_1}} 
\\[2.0ex] 
& \hspace*{1.5in} 
-\big[1+B (1+u+v)^q \big]
\big[1+u+v- \tilde{X} \big((1+s)(1+u)+v \big) \big] \Big\} 
\\[1.0ex] 
:= \!\!\!\! & f_2(u,v). 
\end{array} 
$$  
Thus, the normal form for the BT bifurcation up to second order is 
\begin{equation}\label{Eqn95} 
\begin{array}{rl} 
\dfrac{d y_1}{dt} = \!\!\! & y_2, \\[2.0ex] 
\dfrac{d y_2}{dt} = \!\!\! & c_{20}\, y_1^2 + c_{11} \, y_1 y_2,  
\end{array} 
\end{equation} 
where 
\begin{equation}\label{Eqn96} 
c_{20} = \dfrac{1}{2} \ \dfrac{\partial^2 f_2}{\partial u^2} 
=-\,\dfrac{c_{20a}}{2 s \tilde{X} (1+B)^2 \tilde{X}_1},
\qquad 
c_{11} = \dfrac{\partial^2 f_2}{\partial u v} 
= -\,\dfrac{c_{11a}}{s \tilde{X} (1+B)^2 \tilde{X}_1},
\end{equation} 
in which 
$$
\begin{array}{ll}
c_{20a}= B q^2 (s+1)^2 \tilde{X}^2+(s+1) \big[(B+1)^2-2 B q^2 \big] \tilde{X}
+B q^2, \\[1.0ex] 
c_{11a}= B q^2 (s+1)^2 \tilde{X}^2+\big[(B+1)^2-2 B (s+1) q^2\big] \tilde{X}
+B q^2,
\end{array} 
$$
both of them are quadratic functions in $\tilde{X}$. 
The discriminant of $c_{20a}$ is 
$$ 
\Delta = (s+1)^2 (1+B)^2 \big[(B+1)^2-4 B q^2 \big] < 0 \quad 
\ \ \textrm{for} \ \ q^2 > \dfrac{(B+1)^2}{4B} \ \ 
\Longrightarrow \ \ c_{20a}>0. 
$$ 
When $q^2 \le \frac{(B+1)^2}{4B}$, we have 
$$ 
(B+1)^2-2 B q^2 \ge \dfrac{(B+1)^2}{2} > 0 \quad \Longrightarrow \ \ 
c_{20a}>0.  
$$ 
Therefore, under the conditions \eqref{Eqn90} and \eqref{Eqn93}, 
$c_{20a}>0$ and thus $c_{20}<0$. Similarly, we can show that 
$c_{11}<0$. 
This indicates that the BT bifurcation is at most of condimension two. 
Furthermore, with the non-degenerate condition \eqref{Eqn94}, 
we can conclude that the codimension of the BT bifurcation is two. 
Since our objective here is to demonstrate our method for determining the 
codimension, we refer the reader to the publication~\cite{cui2024saddle} 
for the detailed bifurcation analysis.
\end{proof}

\section{Conclusion and discussion}\label{Sec-8}

The principal result of this paper is a rigorous lower bound on the Hopf
codimension for the SIRS model with generalized nonlinear incidence. Using a
nondimensionalization and parametrization adapted to symbolic focus-value
calculations, the higher focus values are expressed directly in the five
parameters $(a,m,n,g,D)$, with $m=p$ and $n=q$. The resulting Hopf
conditions are algebraic and the biological restrictions become explicit
semialgebraic admissibility inequalities.

Under these admissibility conditions, we rigorously prove the existence of
strict-interior nondegenerate weak foci of orders three, four, and five.  In
the codimension-five case, the first four focus values are certified to
vanish at an isolated admissible point, the fifth focus value is rigorously
separated from zero, and the unfolding Jacobian is nonsingular. The local
normal-form argument then proves the existence of five distinct
small-amplitude limit cycles bifurcating from a single positive equilibrium.
Thus,
\begin{equation}\label{Eqn97}
\operatorname{codim}(\mathrm{BT})\le 2,
\qquad
\operatorname{codim}(\mathrm{Hopf})\ge 5.
\end{equation}
The gap in \eqref{Eqn97} is a notable feature of this model.

The codimension-six question has a different logical status.  Exact
elimination and coprimality calculations, continuation of the certified
codimension-five component, and a global projected Case~\(\mathrm{A}\)
search produce no admissible codimension-six candidate. These calculations
constitute analytical and numerical evidence, not a complete real-algebraic
nonexistence proof. Accordingly, maximality of five cycles is formulated as
a conjecture rather than a theorem.

The analysis therefore establishes rigorously that the generalized incidence
exponents permit Hopf degeneracy of codimension at least five and that five
small-amplitude epidemic oscillations can arise from a single Hopf critical 
point. A natural remaining problem is to determine whether the codimension-six
obstruction can be strengthened to an exact semialgebraic nonexistence
result, thereby deciding whether five is the true maximal number of
small-amplitude limit cycles for this model.

\section*{Acknowledgment}

This work was supported by the Natural Sciences and Engineering 
Research Council of Canada, Grant No.~R2686A07 (P. Yu).
Wanyue Tang is grateful to the China Scholarship Council for funding 
her visit to Western University.

\section*{Appendix A}

The values of the parameters $p$, $q$, $s$, $\tilde{X}$, $B=B_{\rm H}$, 
and $r$, given up to $1000$ decimal points are listed below. 
{\footnotesize 
$$
\!\!\! 
\begin{array}{rl}
p=\!\!\!\!\!& 1.0273089427030402844604318458355364324043191900641683223731696163095337763828821082133683404
\\
&423828802625349260254849646993618204015595138663617783423114205711045535908146813096142994037
\\
&666134721532014278385345606709520244857462299532969410974794034011768131005356808326174621728
\\
&017618620593504430058451609180860863149284366824225722056415654842370228562668001322025262199
\\
&312791636012122905916796587902993625743709393221412178986995102668991165951851848110168707219
\\
&943705766175218144533701829916003213014068462120471884418361593537028815257672619585215078234
\\
&517828952252806754403796249824473052645921920095965052801114634103880556843043956384618389292
\\
&854593504867221527829276280565065493286207778070228018946510171682495621465582507375636955068
\\
&270124404447892461728143041147424333855610068449591672708329455202597669463305684017629219610
\\
&495058180593857487404442892343230344566298950761439761658834413680839479743124801225968649350
\\
&43193179771123705997479211283356774153244700531378301006053102640664493, 
\\
q=\!\!\!\!\!&3.2848749860002783384877996679273324202506499390916818917882588683177056902450443073697212500
\\
&648020947419770343948954235674462638730199316155015819423485690394245802167285603663438278662
\\
&933366233197482805941915862998252257247601909397145168939531041530142803076371947463628280798
\\
&349731397665023847671540267847914747350236021139983178781471157815731068602191964169782756377
\\
&869794287714164245569272471260286269473829943590151560079655508312145165689316298730330983961
\\
&115803359847165516786410048099192079394890893647502803199809152930057878053738616588507177236
\\
&468101966276237235627728669488909854063703877592888818974215554569554213498573312765211253855
\\
&992266633146988403125191747671194640296809949329293402160167013190956792842156455615357234371
\\
&439541963614501968876701591836489762371207249938348990312162438242730294193413339178163464609
\\
&462475697793889472212082394807700016895222320370465581812669448656297345586544033483091123659
\\
&07362305751105346778773445694152713814855919280132067160540259298946719,
\\
\end{array} 
$$
}
{\footnotesize  
$$
\!\!\! 
\begin{array}{rl}
r=\!\!\!\!\!&62.283637823145089533998698955674427050096216101177610973978236196496214513182198570812319057
\\
&472468133042285864158353391488838676394685130594428222432819173903833412747586730588618011221
\\
&442788614066917133694648717957228388344957449724866488004974494469330163883049676372431970901
\\
&363105402871279055085683650642528048905116740941951487024308471546414507408554285382973250405
\\
&637843096739145300242490482377899527769107654966083269563377267946491753658631247225011621425
\\
&193314668535005460030021120766311132926351121829893160381629078904240273464754922967068827628
\\
&704938651810048106533847939854224912397980577552109971596563838201732900797654834992650513780
\\
&956517405263670159074387530347612638687609045217074432895295726870332353853095308823162416842
\\
&185465885704704273076965754108543827926015680671502638461768735119373615316314100430175148257
\\
&770479284626721759193157197167307048151666905606542163832697274710774210343497170319811706886
\\
&03023427789829457452519083788590704180077018159835965307748286241385397,
\\ 
s=\!\!\!\!\!&61.783241779383772960060785232440112174960768263739968974518165388666331132975947946170082036
\\
&424864598692582645055155399414620860587305660710866552049423644737547578423017138388288331366
\\
&679551513786723758953228655008153248227571151550161303304072006962850431596941788332708040413
\\
&390882210774612204971764616302491780220621386600047336537569355789292208671780581258912498730
\\
&811994994334334508103678115596492641658844913386575784021510347794961498603362344190682404670
\\
&299039393418992616958615877562528077732282590428834220022384763852687170770819309150888694289
\\
&404533726179084038389729842052542063112113664512441722455783543726530080854012336724622971673
\\
&259240043364291114437928841872231600274005161108887907813573582430920615926627549446750466880
\\
&351710603488952143042786726515122012286049090368730098865776184183984524504644603470059424555
\\
&140815039790644927208579302839358739520349322566751160322826737726101579097932684851409220173
\\
&34990582326028046383715494296997240123983528343933737745129166329368763,
\\
\tilde{X}=\!\!\!\!\!&0.0065866579367628975052581424277616444651207611620331725801628217739189325101184947980003641
\\
&781529374710696363123251051023602853351921247633309811022327393887163404075748239949323856856
\\
&648763090555986699718959288244041152212633759857669820060051212524094885679802901146616659320
\\
&631910708241677988921643527898596907922529225400309469820469779162517400587432818603705154652
\\
&359560386962486601016126784312489189885753747455229321714095444126038217751230272587130097912
\\
&131313406913532028077255443827451979039503035207263750187829204915482329111520585581668144248
\\
&325412736108949893577808976506398317016965646660284712395600491107816700976530136294306717601
\\
&872814716972374520934075732967688588914876960827165197859040294259211941454172696956795559857
\\
&802775155378953364637497028135823450971884559798167927931253106879440922374399641249771639284
\\
&871385976193959226805310039180548424960040734900831276174868317807910699584515739294350161855
\\
&94404490895190090001092543914659258099641565118073206194563696765563574213,
\\ 
B=\!\!\!\!\!&0.0000067904125322984352081055607186420657998404544751514266032113084931595475180858189621688
\\
&408448194372058290905791613822748246764852975548043299932013017376386349490584196261716543433
\\
&206596332499718790642091339583227448400323706916107173212749242003633908453063074842257585758
\\
&471838002740663970572238854408698212110954957706031996586672366892930824805107967466641703459
\\
&648613406679484424989806648359532687067842299293719211634897474790044377677736641128592170064
\\
&921136404759727495798562986295399819530775614539779392333868615288363514974439223508977454412
\\
&624247446982237200294025184238297994322116044481102547736821632452584864879700847790309521756
\\
&428289627659063833703772895676145206246921972922855792905714036916072759608002018645253666602
\\
&326919850163957195560957551737618036061472158760084067430803390087905904880592172119251542468
\\
&678898724358163197189043339254889436530835414520366245887252311134305399864859460820630586611
\\
&04995707292089830453000884336920616958351398771892014416367786141361554412185.
\end{array}
$$ 
}


\begin{thebibliography}{99}
\bibitem{anderson1979population}
R.~M. Anderson, R.~M. May,
Population biology of infectious diseases: Part I,
Nature 280(5721) (1979) 361--367.

\bibitem{anderson1991infectious}
R.~M. Anderson, R.~M. May,
Infectious Diseases of Humans: Dynamics and Control,
Oxford University Press, Oxford, 1991.

\bibitem{capasso1978generalization}
V. Capasso, G. Serio,
A generalization of the Kermack-McKendrick deterministic epidemic model,
Math. Biosci. 42(1-2) (1978) 43--61.

\bibitem{cui2024saddle}
W. Cui, Y. Zhao,
Saddle-node bifurcation and {Bogdanov--Takens} bifurcation of a 
SIRS epidemic model with nonlinear incidence rate,
J. Differ. Equ. 384 (2024), 252--278.

\bibitem{diekmann2000mathematical}
O. Diekmann, J.~A.~p. Heesterbeek,
Mathematical Epidemiology of Infectious Diseases: 
Model Building, Analysis and Interpretation,
John Wiley and Sons, Chichester, 2000.

\bibitem{hethcote1976qualitative}
H.~W. Hethcote,
Qualitative analyses of communicable disease models,
Math. Biosci. 28(3-4) (1976) 335--356.

\bibitem{hethcote2000mathematics}
H.~W. Hethcote,
The mathematics of infectious diseases,
SIAM Rev. 42(4) (2000) 599--653.

\bibitem{hu2011bifurcations}
Z. Hu, P. Bi, W. Ma, S. Ruan,
Bifurcations of an SIRS epidemic model with nonlinear incidence rate,
Discrete Contin. Dyn. Syst. B 15(1) (2011) 93--112.

\bibitem{hu2012}
Z. Hu, W. Ma, S. Ruan,
Analysis of SIR epidemic models with nonlinear incidence rate and treatment,
Math. Biosci. 238(1) (2012) 12--20.

\bibitem{kermack1927contribution}
W.~O. Kermack, A.~G. McKendrick,
A contribution to the mathematical theory of epidemics,
Proceedings of the Royal Society of London. Series A, 
Containing Papers of a Mathematical and Physical Character,
115 (772) (1927) 700--721.

\bibitem{li2018}
J. Li, Z. Teng,
Bifurcations of an SIRS model with generalized non-monotone incidence rate,
Adv. Differ. Equ. 2018(1) (2018) 1--21.
\bibitem{el2023extending}
M.~El Khalifi and T.~Britton,
Extending susceptible-infectious-recovered-susceptible epidemics to allow for gradual waning of immunity, J. R. Soc. Interface {20}(206) (2023) 20230042.
\bibitem{lizana1996multiparametric}
M. Lizana, J. Rivero,
Multiparametric bifurcations for a model in epidemiology,
J. Math. Biol. 35(1) (1996) 21--36.

\bibitem{liu1986influence}
W.-M. Liu, S.~A. Levin, Y. Iwasa,
Influence of nonlinear incidence rates upon the behavior of SIRS 
epidemiological models,
J. Math. Biol. 23(2) (1986) 187--204.

\bibitem{lu2019bifurcation}
M. Lu, J. Huang, S. Ruan, P. Yu,
Bifurcation analysis of an SIRS epidemic model with a generalized 
nonmonotone and saturated incidence rate,
J. Differ. Equ. 267(3) (2019) 1859--1898.

\bibitem{lu2021global}
M. Lu, J. Huang, S. Ruan, P. Yu,
Global dynamics of a susceptible-infectious-recovered epidemic 
model with a generalized nonmonotone incidence rate,
J. Dyn. Differ. Equ. 33(4) (2021) 1625--1661.

\bibitem{ruan2003dynamical}
S. Ruan, W. Wang,
Dynamical behavior of an epidemic model with a nonlinear incidence rate,
J. Differ. Equ. 188(1) (2003) 135--163.

\bibitem{tang2008coexistence}
Y. Tang, D. Huang, S. Ruan, W. Zhang,
Coexistence of limit cycles and homoclinic loops in a SIRS model 
with a nonlinear incidence rate,
SIAM J. Appl. Math. 69(2) (2008) 621--639.

\bibitem{tianyu2013}
Y. Tian, P. Yu,
An explicit recursive formula for computing the normal form and 
center manifold of general $n$-dimensional differential systems 
associated with Hopf bifurcation,
Int. J. Bifurcation and Chaos 23(6) (2013) 1350104.

\bibitem{tianyu2015}
Y. Tian, P. Yu,
Center conditions in a switching {Bautin} system,
J. Differ. Equ. 259(3) (2015) 1203--1226.

\bibitem{xiao2007global}
D. Xiao, S. Ruan,
Global analysis of an epidemic model with nonmonotone incidence rate,
Math. Biosci. 208(2) (2007) 419--429.

\bibitem{xiao2006qualitative}
D. Xiao, Y. Zhou,
Qualitative analysis of an epidemic model,
Canadian Appl. Math. Quarterly 14(4) (2006) 469--492.

\bibitem{yu1998}
P. Yu,
Computation of normal forms via a perturbation technique,
J. Sound Vib. 211(1) (1998) 19--38.

\bibitem{zhang2022bifurcations}
F. Zhang, W. Cui, Y. Dai, Y. Zhao,
Bifurcations of an SIRS epidemic model with a general saturated incidence rate,
Math. Biosci. Eng. 19(11) (2022) 10710--10730.

\bibitem{zhou2007bifurcations}
Y. Zhou, D. Xiao, Y. Li,
Bifurcations of an epidemic model with non-monotonic incidence rate 
of saturated mass action,
Chaos, Solitons and Fract. 32(5) (2007) 1903--1915.

\bibitem{krawczyk1969newton}
R. Krawczyk,
Newton-Algorithmen zur Bestimmung von Nullstellen mit Fehlerschranken,
Computing 4(3) (1969) 187--201.


\bibitem{moore2009introduction}
R.~E. Moore, R.~B. Kearfott, and M.~J. Cloud,
\newblock {\em Introduction to Interval Analysis},
\newblock SIAM, Philadelphia, 2009.

\bibitem{rump2010verification}
S.~M. Rump,
\newblock Verification methods: rigorous results using floating-point arithmetic,
\newblock {\em Acta Numerica} 19 (2010) 287--449.


\end{thebibliography}

\end{document}